\definecolor{cof}{RGB}{219,144,71}
\definecolor{pur}{RGB}{186,146,162}
\definecolor{greeo}{RGB}{91,173,69}
\definecolor{greet}{RGB}{52,111,72}
\newcommand{\changefont}{%
    \fontsize{8}{8}\selectfont
}
\title[Cellularization of spherical space forms and the flag manifold of $SL_3(\R)$]{Cellularization for exceptional spherical space forms and the flag manifold of $SL_3(\mathbb{R})$}
\author{Rocco Chiriv\`i, Arthur Garnier and Mauro Spreafico}
\theoremstyle{plain}
\newtheorem{prop}{Proposition}[subsection]
\newtheorem{prop-def}[prop]{Proposition-Definition}
\newtheorem{lem}[prop]{Lemma}
\newtheorem{theo}[prop]{Theorem}
\newtheorem{cor}[prop]{Corollary}
\newtheorem{rem}[prop]{Remark}
\newtheorem*{prop*}{Proposition}
\newtheorem*{prop-def*}{Proposition-Definition}
\newtheorem*{propri*}{Property}
\newtheorem*{lem*}{Lemma}
\newtheorem*{theo*}{Theorem}
\newtheorem*{cor*}{Corollary}
\newtheorem*{rem*}{Remark}
\newtheorem*{definition*}{Definition}
\newtheorem*{exemple*}{Example}
\newtheorem*{notation*}{Notation}
\newcommand{\lra}{\longrightarrow}
\newcommand{\ra}{\rightarrow}
\newcommand{\sdp}{\times\kern-.2em\vrule height1.1ex depth-.05ex}
\newcommand{\epi}{\lra \kern-.8em\ra}
\newcommand{\C}{{\mathbb C}}
\newcommand{\N}{{\mathbb N}}
\newcommand{\R}{{\mathbb R}}
\newcommand{\Z}{{\mathbb Z}}
\newcommand{\ho}{\mathrm{Hom}\,}
\newcommand{\Sph}{\mathbb{S}}
\newcommand{\Pro}{\mathbb{P}}
\newcommand{\Sym}{\mathfrak{S}}
\newcommand{\BT}{\mathcal{T}}
\newcommand{\BO}{\mathcal{O}}
\newcommand{\BI}{\mathcal{I}}
\newcommand{\Pol}{\mathscr{P}}
\newcommand{\Alt}{\mathfrak{A}}
\newcommand{\interior}[1]{\accentset{\circ}{#1}}
\DeclareMathOperator\conv{conv}
\DeclareMathOperator\vertices{vert}
\newcommand{\bigslant}[2]{{\raisebox{.2em}{$#1$}\left/\raisebox{-.2em}{$#2$}\right.}}
\newcommand{\eq}[1][r]
{\ar@<-3pt>@{-}[#1]
\ar@<-1pt>@{}[#1]|<{}="gauche"
\ar@<+0pt>@{}[#1]|-{}="milieu"
\ar@<+1pt>@{}[#1]|>{}="droite"
\ar@/^2pt/@{-}"gauche";"milieu"
\ar@/_2pt/@{-}"milieu";"droite"}
\newlength\@SizeOfCirc%
\newcommand{\CircleArrowRight}[1]{%
    \setlength{\@SizeOfCirc}{\maxof{\widthof{#1}}{\heightof{#1}}}%
    \tikz [x=1.0ex,y=1.0ex,line width=.12ex]%
        \draw [->,anchor=center]%
            node (0,0) {#1}%
            (0,0.8\@SizeOfCirc) arc (85:-240:0.8\@SizeOfCirc);%
}%
\newcommand{\CircleArrowLeft}[1]{%
    \setlength{\@SizeOfCirc}{\maxof{\widthof{#1}}{\heightof{#1}}}%
    \tikz [x=1.0ex,y=1.0ex,line width=.12ex]%
        \draw [<-,anchor=center]%
            node (0,0) {#1}%
            (0,0.8\@SizeOfCirc) arc (85:-240:0.8\@SizeOfCirc);%
}%
\subjclass[2010]{Primary 57N60, 57R91, 57N12 ; Secondary 57M60, 52B11}
\keywords{spherical space form, cellularity, polytope, flag manifold, Weyl group, quaternions}
\date{\today}
\begin{document}

\begin{abstract}
We construct an explicit equivariant cellular decomposition of the $(4n-1)$-sphere with respect to binary polyhedral groups, and describe the associated cellular homology chain complex. 
\newline
\indent As a corollary of the binary octahedral case, we deduce an $\Sym_3$-equivariant decomposition of the flag manifold of $SL_3(\R)$.
\end{abstract}

\maketitle


\section{Introduction}


Given a finite group acting freely on a compact topological manifold, it is natural to look for an equivariant cellular decomposition: in particular, this provides a bounded cochain complex of free modules over the group, lifting the action on cohomology to the derived category level.

Milnor classified in \cite{milnor_Sn} finite groups acting freely on $\Sph^3$: quaternionic, metacyclic, generalized tetrahedral, binary octahedral and binary icosahedral groups. For all those except the last two, an equivariant decomposition is known \cite{mms13,fgns_meta,fgnsT,chirivi-spreafico}. In the present article, we deal with the two exceptional cases, which we will denote by $\BO$ and $\BI$. We also treat the first tetrahedral group $\BT$, since our technique gives a different construction than that of \cite{fgnsT}.


Note that $\Sph^3/\BI$ is the Poincaré homology sphere (see Theorem \ref{SPHIcells} and Remark \ref{poincare}). The other case is also interesting: as a corollary, we obtain an $\Sym_3$-equivariant decomposition of the flag manifold of $SL_3(\R)$.

More precisely, the flag manifold of $SL_3(\R)$ is $\mathcal{F}(\R)=\{(0\lneq V_1\lneq V_2\lneq \R^3)\}$ and its Weyl group $W=\Sym_3$ acts freely on it. We have a tower of covering maps
\[\xymatrix@R-1pc{\Sph^3 \ar@/_10pt/@{->>}_{/\mathcal{Q}_8}[rrd] \ar@{->>}^<<<<<{/\{\pm1\}}[r] & SO(3) \ar@{->>}^<<<<<{/\{\pm1\}^2}[r] & SO(3)/\{\pm1\}^2 \eq[d] \ar@{->>}^<<<<<{/\Sym_3}[r] & \Sph^3/\BO \\ & & \mathcal{F}(\R) \ar@/_10pt/@{->>}_{/\Sym_3}[ru] & }\]


This will provide an $\Sym_3$-equivariant cellular decomposition of $\mathcal{F}(\R)$, once a $\BO$-equivariant cellular structure on $\Sph^3$ is known. This is actually the motivation of this paper.
\newline
\indent Our strategy is based on the ideas of \cite{chirivi-spreafico}. Given a finite group $G$ acting freely on $\Sph^n$, one looks at the \emph{orbit polytope} $\Pol$, i.e. the convex hull of the orbit of a point in $\Sph^n$. Then, the group acts freely on the \emph{boundary} $\partial\Pol$ of $\Pol$ (see the Theorem \ref{thm6.4}) and an equivariant cellular decomposition is found by decomposing a \emph{fundamental domain} $\mathscr{D}$ for the action on $\partial\Pol$. Next, using the $G$-equivariant homeomorphism $\partial\mathscr{P}\to\mathbb{S}^n$, we obtain a decomposition of the sphere. Furthermore, the fact that the decomposition comes from a decomposition of some polytopal complex with open faces as cells, implies that the resulting decomposition of $\Sph^n$ is regular (that is, the closure of a cell is homeomorphic to a closed ball) and the boundaries are then easily computed. 

The main tool is the theorem \ref{thm6.4}, which essentially says that we can find representatives for the action of $G$ on the \emph{facets} of $\Pol$ such that their union is a fundamental domain. We proceed by determining such representatives for $\BO$ and $\BI$ and $\BT$.
\newline
\indent The main results of this paper may be summarized as follows, combining Theorems \ref{SPHNOcells} and \ref{SPHNIcells}.
\begin{theo*}
Every sphere $\Sph^{4n-1}$, endowed with the natural free and isometric action of $\BO$ (resp. of $\BI$, $\BT$), admits an explicit equivariant cell decomposition. As a consequence, the associated cellular homology chain complex is explicitly given in terms of matrices with entries in the group algebras $\Z[\BO]$, $\Z[\BI]$ and $\Z[\BT]$, respectively.
\end{theo*}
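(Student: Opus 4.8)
The plan is to follow the orbit-polytope strategy described above, establishing the base case $n=1$ by an explicit polytopal computation and then propagating the cell structure to every $\Sph^{4n-1}$ by a join construction. Fix $G\in\{\BO,\BI,\BT\}$, realized as a finite subgroup of the unit quaternions $\Sph^3\subset\quat$ and acting on $\Sph^3$ by left multiplication; this action is free because $G$ consists of units. First I would choose a base point $p\in\Sph^3$ with trivial stabilizer — conveniently $p=1$, so that the orbit $G\cdot p$ is $G$ itself — and form the orbit polytope $\Pol=\conv(G\cdot p)\subset\R^4$, whose $|G|$ vertices are exactly the group elements. By Theorem \ref{thm6.4}, $G$ acts freely on $\partial\Pol$ and there is a system of representatives for the $G$-action on the facets of $\Pol$ whose union is a fundamental domain $\mathscr{D}$.

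The heart of the argument is the explicit combinatorial analysis of $\Pol$, carried out separately for each group. For $\BT$ the orbit is the set of $24$ Hurwitz units, whose convex hull is the $24$-cell; for $\BI$ it is the set of $120$ icosians, whose convex hull is the $600$-cell; for $\BO$ one obtains a less symmetric $4$-polytope on $48$ vertices. In each case I would list the vertices, compute the supporting hyperplanes to read off the facets and the full face lattice, and then decompose the fundamental domain $\mathscr{D}$ into the open faces of the polytopal complex lying in it. The $G$-translates of these open faces form a $G$-equivariant \emph{regular} CW structure on $\partial\Pol$ in which every cell orbit is free, so each orbit contributes a single free generator to the chain complex over $\Z[G]$. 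Radial projection gives a $G$-equivariant homeomorphism $\partial\Pol\xrightarrow{\sim}\Sph^3$, transporting this to a free $G$-CW structure on $\Sph^3$. Regularity forces all incidence numbers to lie in $\{0,\pm1\}$, so the cellular boundary maps are matrices whose entries are $\Z$-linear combinations of the group elements carrying one representative face onto another, i.e. matrices over $\Z[G]$.

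To reach higher dimensions I would use the $G$-equivariant identification $\Sph^{4n-1}\cong\Sph^3\ast\cdots\ast\Sph^3$ ($n$-fold join), coming from $\quat^n=\quat\oplus\cdots\oplus\quat$ with $G$ acting diagonally by left multiplication; the diagonal action is again free and is precisely the join of the factor actions. A free $G$-CW structure on $\Sph^3$ then induces one on the join, whose cells are the cells of the factors together with join cells $\sigma\ast\tau$ of dimension $\dim\sigma+\dim\tau+1$. The associated $\Z[G]$-chain complex is obtained from that of $\Sph^3$ by an iterated shifted tensor product, which keeps all boundary matrices explicit over $\Z[G]$ and exhibits the expected $4$-periodicity of the resolution.

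The main obstacle is the explicit combinatorial bookkeeping in the base case: identifying the facet orbits and a clean fundamental domain, and especially fixing coherent orientations so that the incidence data assemble into a genuine chain complex with $\partial^2=0$ over $\Z[G]$. This is most delicate for $\BI$, where $\partial\Pol$ is the boundary of the $600$-cell with its $600$ tetrahedral facets and intricate face lattice; verifying freeness of the action on every face orbit and tracking the group-element labels through all dimensions is where the bulk of the careful computation resides.
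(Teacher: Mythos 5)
Your proposal proves the statement, and for the crucial case $\Sph^3$ it is essentially the paper's own argument: orbit polytope $\Pol=\conv(G)$, Theorem \ref{thm6.4} to produce facet representatives whose union is a fundamental domain, radial projection to the sphere, and free action on faces giving one free $\Z[G]$-generator per orbit. Two differences deserve comment. In the base case you take \emph{all} open polytopal faces as cells; this is valid (regularity gives incidence numbers $\pm1$, and by Theorem \ref{thm6.4} the action on faces of each dimension is free), but it yields a larger complex than the paper's: for $\BO$ the face orbits give ranks $1,7,12,6$, whereas the paper merges pairs of triangles across shared edges into single $2$-cells (Lemma \ref{ODcells}) to reach ranks $1,3,3,1$ (Proposition \ref{Ochain}); for $\BT$ the two coincide. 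More significantly, your inductive step is genuinely different from the paper's. You form the $n$-fold join $\Sph^3\ast\cdots\ast\Sph^3$ with cells the factor cells together with all joins $\sigma\ast\tau$, so the equivariant chain complex is the shifted tensor product over $\Z$ with diagonal $G$-action. This is indeed a free, explicit $\Z[G]$-complex, so it suffices for the theorem as stated. The paper instead (Lemma \ref{lem_join}) takes as fundamental domain $\Sph^{4(n-1)-1}\ast\mathscr{F}_3$ and attaches, above the \emph{entire} equivariant skeleton of the lower sphere, a single cell $\Sph^{4(n-1)-1}\ast\widetilde{e}$ for each representative cell $\widetilde{e}$ of the fundamental domain of $\Sph^3$; this cone-like join keeps the orbit ranks at $1,3,3,1$ (resp. $1,5,5,1$ for $\BI$, $1,4,4,1$ for $\BT$) in every window of four dimensions.

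This difference matters for your final sentence, which is the one genuine error in the proposal: the iterated cell-by-cell join does \emph{not} ``exhibit the expected $4$-periodicity.'' The ranks of the shifted tensor product grow with $n$: already for $n=2$ and $G=\BO$, the top-dimensional cells are the $48\times48$ joins $\sigma^3\ast\tau^3$, giving $48$ orbits, hence $\Z[\BO]$-rank $48$ in degree $7$ instead of $1$. Your complex is chain homotopy equivalent to the periodic one, but periodicity is not visible from it without a further reduction; it is precisely the paper's join-with-the-whole-lower-sphere construction that produces the $4$-periodic resolution needed for Corollary \ref{resolforO} and its analogues. Since periodicity is claimed in the corollary rather than in the theorem you were asked to prove, this overreach does not invalidate your proof of the statement itself.
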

The crucial case is $\Sph^3$. Then one may prove the result inductively, using curved joins. As a consequence, one obtains the following result, which combines the results \ref{resolforO}, \ref{SPHNIcells}, \ref{HstarO} and \ref{HstarI}.
\begin{cor*}
One may give an explicit free $4$-periodic resolution of the trivial module $\Z$ over $\Z[\BO]$, $\Z[\BI]$ and $\Z[\BT]$. In particular, one can compute the cohomology modules $H^*(\BO,M)$, $H^*(\BI,M)$ and $H^*(\BT,M)$ for any $\Z[G]$-module $M$.
\end{cor*}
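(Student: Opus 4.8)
The plan is to read the periodic resolution straight off the equivariant cellular chain complex of $\Sph^3$ produced by Theorems \ref{SPHNOcells} and \ref{SPHNIcells} (specialized to $\Sph^3$, for $\BO$, $\BI$ and $\BT$). Since each group acts freely and cellularly, every orbit of open cells is free, so each chain group $C_i$ is a free $\Z[G]$-module of rank equal to the number of cells in a fundamental domain, and the differentials $\partial_i$ are exactly the explicit matrices over $\Z[G]$ given there. Writing $\varepsilon\colon C_0\to\Z$ for the augmentation, the complex $C_\bullet$ computes $H_*(\Sph^3)$, hence is exact in the middle range, its only homology being $H_0=\Z$ and $H_3=\ker\partial_3\cong\Z$.

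The crucial point is that the action is orientation preserving: each of $\BT$, $\BO$, $\BI$ embeds in $Sp(1)=SU(2)\subset SO(4)$ and acts on $\Sph^3\subset\quat$ by isometries, so $G$ fixes the fundamental class and $\ker\partial_3\cong\Z$ is the \emph{trivial} module. I would fix an explicit generating $3$-cycle $z\in C_3$ in the chosen cellular basis, yielding a $\Z[G]$-linear inclusion $\iota\colon\Z\to C_3$, $1\mapsto z$; then
\[
0\to\Z\xrightarrow{\ \iota\ }C_3\xrightarrow{\ \partial_3\ }C_2\xrightarrow{\ \partial_2\ }C_1\xrightarrow{\ \partial_1\ }C_0\xrightarrow{\ \varepsilon\ }\Z\to 0
\]
is exact, with the trivial module at both ends.

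Splicing countably many copies of this sequence along the two trivial $\Z$'s — replacing each composite $C_0\to\Z\to C_3$ by the single map $\iota\varepsilon\colon C_0\to C_3$ — produces a free $4$-periodic resolution of $\Z$ over $\Z[G]$. Exactness of the splice is formal: since $\iota$ is injective, $\ker(\iota\varepsilon)=\ker\varepsilon=\ima\partial_1$, and since $\varepsilon$ is surjective, $\ima(\iota\varepsilon)=\ima\iota=\ker\partial_3$. When $C_3$ and $C_0$ are free of rank one (a single orbit of top cells and of vertices), the splice $\iota\varepsilon$ takes the familiar norm-element form, multiplication by $N=\sum_{g\in G}g$, recovering the classical shape of a periodic resolution. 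Applying $\mathrm{Hom}_{\Z[G]}(-,M)$ and taking cohomology of the resulting $4$-periodic cochain complex then yields $H^*(G;M)$ together with the periodicity $H^{k+4}(G;M)\cong H^k(G;M)$ for every $\Z[G]$-module $M$; this is what is carried out in \ref{resolforO}, \ref{HstarO} and \ref{HstarI}.

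The main obstacle is the explicit determination of the top cycle $z$: one must identify, as a $\Z[G]$-combination of the $3$-cells, the integral orientation class lying in $\ker\partial_3$, so that $\iota\varepsilon$ is written down concretely rather than merely shown to exist. Once $z$ is produced, checking $\partial_3 z=0$ and the surjectivity of $\varepsilon$ is a finite verification against the boundary matrices of \ref{SPHNOcells} and \ref{SPHNIcells}, and $4$-periodicity is then automatic.
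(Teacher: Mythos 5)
Your proof is correct, but it reaches the periodic resolution by a genuinely different route than the paper. You splice: from the $\Sph^3$ chain complex you extract the exact sequence $0\to\Z\xrightarrow{\ \iota\ }C_3\xrightarrow{\ \partial_3\ }C_2\xrightarrow{\ \partial_2\ }C_1\xrightarrow{\ \partial_1\ }C_0\xrightarrow{\ \varepsilon\ }\Z\to0$, using orientation-preservation to see that $\ker\partial_3$ is the \emph{trivial} module generated by the fundamental cycle $z=\bigl(\sum_{g\in G}g\bigr)\cdot e^3$, and then concatenate countably many copies along $\iota\varepsilon$; this is the classical algebraic argument for groups with periodic cohomology, and your observation that $\iota\varepsilon$ is right multiplication by the norm element $\sum_{g\in G}g$ reproduces exactly the paper's $\partial_{4q}$. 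The paper instead realizes the periodicity geometrically: by Lemma \ref{lem_join} and Proposition \ref{OfunddomN}, the fundamental domain and cell structure on $\Sph^3$ propagate by curved joins to equivariant cell structures on every $\Sph^{4n-1}$, the new $4q$-cell being $\Sph^{4q-1}\ast\widetilde{e}^{\,0}$, whose boundary is the full $G$-orbit of the top cell of $\Sph^{4q-1}$ --- this is where $\partial_{4q}=\sum_{g\in G}g$ comes from in Corollary \ref{resolforO} and Theorem \ref{SPHNOcells} --- so the resolution appears as the cellular chain complex of an equivariant $\Sph^{\infty}$. Your route is shorter and purely algebraic, needing nothing beyond the three-dimensional complex and the fixed fundamental class; the paper's route costs the join machinery but yields as a by-product the explicit equivariant decompositions of all spherical space forms $\mathsf{P}^{4n-1}_{\mathcal{G}}$ (Theorems \ref{SPHNOcells}, \ref{SPHNIcells}, \ref{SPHNTcells}), which the paper wants for its own sake. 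One referencing caveat: as input you should cite the $\Sph^3$ statements (Theorem \ref{SPHOcells}, Propositions \ref{Ochain}, \ref{Ichain}, \ref{Tchain}) rather than Theorems \ref{SPHNOcells} and \ref{SPHNIcells}, since the latter already assert the norm-element boundaries that your splicing argument is meant to produce, and quoting them would make the argument circular.
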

\indent It should be noted that such resolutions were already given in \cite{tomoda-zvengrowski_2008}. Our approach however has the advantage of being more conceptual and geometric. Moreover, using the first result above we can derive the following consequence (see Theorems \ref{S3equiv}, \ref{S3actioncohomology} and Corollary \ref{S3actioncohomology}):
\begin{theo*}
The flag manifold $\mathcal{F}(\R)$ admits an explicit equivariant cell decomposition, with respect to its Weyl group $\Sym_3$. In particular, its cellular homology chain complex is explicitly given in terms of matrices with entries in $\Z[\Sym_3]$ and the isomorphism type of the $\Z[\Sym_3]$-module $H^{\ast}(\mathcal{F}(\R),\Z)$ is determined.
\end{theo*}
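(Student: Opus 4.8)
The plan is to obtain everything by descending the $\BO$-equivariant structure on $\Sph^3$ along the normal subgroup $\mathcal{Q}_8 \triangleleft \BO$. Recall from the covering tower that $\mathcal{F}(\R) = \Sph^3/\mathcal{Q}_8$ and that $\BO/\mathcal{Q}_8 \cong \Sym_3$, so the residual action of $\Sym_3$ on $\mathcal{F}(\R)$ is free. First I would take the explicit regular $\BO$-CW structure on $\Sph^3$ produced in Theorem \ref{SPHNOcells} and restrict the group to $\mathcal{Q}_8$; since the $\BO$-action is free and cellular, so is its restriction, and the covering projection $\Sph^3 \to \Sph^3/\mathcal{Q}_8 = \mathcal{F}(\R)$ carries open cells to open cells. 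This endows $\mathcal{F}(\R)$ with a regular cell decomposition whose cells are the $\mathcal{Q}_8$-orbits of cells of $\Sph^3$, and on which $\Sym_3 = \BO/\mathcal{Q}_8$ acts freely and cellularly; this is the desired $\Sym_3$-equivariant decomposition (Theorem \ref{S3equiv}).

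For the chain complex, the cellular chain complex of $\Sph^3$ is a complex of free $\Z[\BO]$-modules $C_n = \Z[\BO]^{a_n}$ with explicit boundary matrices over $\Z[\BO]$. Passing to the quotient amounts to forming $\Z[\Sym_3] \otimes_{\Z[\BO]} C_\bullet$, and since $\Z[\Sym_3] \otimes_{\Z[\BO]} \Z[\BO] = \Z[\Sym_3]$ the ranks $a_n$ are unchanged while each boundary matrix is obtained by applying the projection $\Z[\BO] \twoheadrightarrow \Z[\Sym_3]$ entrywise. So I would simply read off the $\Z[\Sym_3]$-boundary matrices from those of Theorem \ref{SPHNOcells}, giving the explicit cellular chain complex of $\mathcal{F}(\R)$ over $\Z[\Sym_3]$.

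It remains to compute the homology as a $\Z[\Sym_3]$-module. As $\mathcal{F}(\R)$ is a closed orientable $3$-manifold with $\pi_1 = \mathcal{Q}_8$, the underlying abelian groups are forced: $H_0 = H_3 = \Z$, $H_1 = \mathcal{Q}_8^{\mathrm{ab}} = (\Z/2)^2$ and $H_2 = 0$, whence by universal coefficients $H^0 = H^3 = \Z$, $H^2 = (\Z/2)^2$ and $H^1 = 0$. Because $\BO$ acts on $\Sph^3$ by orientation-preserving isometries (left translation by unit quaternions), $\Sym_3$ acts trivially on $H_0$ and $H_3$. The content is the action on $H_1$: it is the residual conjugation action of $\BO/\mathcal{Q}_8$ on $\mathcal{Q}_8^{\mathrm{ab}} = \mathcal{Q}_8/\{\pm1\}$, under which $\Sym_3$ permutes the three nonzero classes $\ol{i}, \ol{j}, \ol{k}$; this identifies $H_1$, and self-dually $H^2$, with the $2$-dimensional irreducible $\F_2[\Sym_3]$-module. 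I expect the main obstacle to be pinning down exactly this module structure: one must extract it from the explicit boundary matrices by tracking how the generators of $\Sym_3$ permute the cycles representing $H_1$, and then check that this matches the outer action on $\mathcal{Q}_8^{\mathrm{ab}}$ and is compatible with Poincaré duality across the whole complex.
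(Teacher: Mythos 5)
Your proposal is correct in substance and, for the construction of the equivariant decomposition and of its chain complex, it is the same argument as the paper's: descend the $\BO$-equivariant structure on $\Sph^3$ along the normal subgroup $\mathcal{Q}_8\unlhd\BO$ (Lemma \ref{quotient_action}, Proposition \ref{ident_actions}, Theorem \ref{S3equiv}). Two differences are worth recording. First, you gloss over the left/right issue that the paper treats explicitly: $\BO$ acts on $\Sph^3$ on the \emph{left} in Theorem \ref{SPHOcells}, whereas the Weyl group acts on $\mathcal{F}(\R)=SO_3(\R)/T(\R)$ on the \emph{right}, and the identification $\mathcal{F}(\R)\simeq\Sph^3/\mathcal{Q}_8$ coming from $\mathrm{B}^{-1}(T(\R))=\mathcal{Q}_8$ is by right cosets. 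Consequently, plain entrywise projection of the matrices of Proposition \ref{Ochain} yields the chain complex of the \emph{left} quotient with its residual left action, which is only the opposite (left-module) version of the Weyl-equivariant complex; the paper corrects for this by replacing every quaternion by its inverse and transposing the matrices (proof of Theorem \ref{S3equiv}). Here this is harmless for isomorphism types, since $\mathds{1}$ and the two-dimensional irreducible $\F_2[\Sym_3]$-module are isomorphic to their own twists by $w\mapsto w^{-1}$, but your ``explicit'' matrices would not literally describe the Weyl action without that inversion; also, regularity need not survive the quotient (the antipodal quotient of $\Sph^1$ with two cells per dimension already shows this), though nothing in the statement requires it. Second, for the module structure of $H^{\ast}(\mathcal{F}(\R),\Z)$ you take a genuinely different, and cleaner, route: the paper exhibits explicit cycles $x,y$ generating $H_1(\mathcal{F}(\R),\Z)$, computes the matrices of $s_\alpha,s_\beta$ on them, and transfers the result to $H^2$ via Poincar\'e duality (Corollary \ref{S3actioncohomology}); you instead identify the $\Sym_3$-action on $H_1=\pi_1(\mathcal{F}(\R))^{\mathrm{ab}}=\mathcal{Q}_8/\{\pm1\}$ with the outer conjugation action attached to the extension $1\to\mathcal{Q}_8\to\BO\to\Sym_3\to1$, which faithfully permutes $\ol{i},\ol{j},\ol{k}$ and is therefore the unique two-dimensional irreducible $\F_2[\Sym_3]$-module; combined with orientability and the universal coefficient theorem, this determines all of $H^{\ast}$ with no chain-level computation. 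Your closing worry is unfounded: the conjugation description of the residual action on $\pi_1^{\mathrm{ab}}$ is standard covering-space theory and needs no verification against the boundary matrices --- a computation-free identification is exactly what your approach buys, while the paper's explicit cycles buy coordinates compatible with the rest of its chain-level apparatus.
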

\indent Let us outline the content of the article. In Section \ref{orbit}, after a quick reminder on polytopes, we introduce orbit polytopes and study some of their properties. Most importantly, we explain how to obtain a polytopal fundamental domain for the boundary of an orbit polytope, and hence for the sphere, using the radial projection. Most of those results appeared in \cite{chirivi-spreafico}, we recall them for the convenience of the reader. 
\newline
\indent In Section \ref{binary}, we introduce the binary polyhedral groups as finite subgroups of unit quaternions and \emph{spherical space forms}.
\newline
\indent In Sections \ref{octahedral}, \ref{icosahedral} and \ref{tetrahedral}, we apply the orbit polytope techniques to the cases where $G$ is $\BO$, $\BI$ or the binary tetrahedral group $\BT$ acting on $\Sph^3$. In particular, we explicitly describe a fundamental domain for the boundary of the polytope, and we use it to determine a $G$-equivariant cellular decomposition of $\Sph^3$. Moreover, we compute the resulting cellular homology chain complexes (which are bounded complexes of free $\Z[G]$-modules). Finally, we generalize this to $\Sph^{4n-1}$ and use the resulting equivariant cellular decomposition to obtain an explicit 4-periodic free resolution of $\Z$ over $\Z[G]$ and recover the integral cohomology of $G$. Moreover, in Section \ref{octahedral}, the application to the real flag manifold of $SL_3(\R)$ is derived.

\section{Orbit polytopes}\label{orbit}
\indent The following section gives the main tools for determining fundamental domains for finite groups acting isometrically on the sphere $\Sph^3$, by using their orbit polytopes. We recall results from \cite{chirivi-spreafico}. For general properties of polytopes, the reader is referred to \cite{ziegler-poly}.
\subsection{Some general facts on polytopes}
\hfill\\

We denote by $\Sph^{n-1}:=\{x\in\R^n~;~|x|=1\}$ the $(n-1)$-dimensional sphere and by $\mathbb{D}^n:=\{x\in\R^n~;~|x|\le 1\}$ the $n$-dimensional disc. To a set of points $X$ in $\R^n$, one can associate its \emph{convex hull} denoted by $\mathrm{conv}(X)$.
\newline
\indent The convex hull $\Pol=\mathrm{conv}(x_1,\dotsc,x_n)$ of a finite set of points is called a \emph{polytope}. The \emph{dimension} $\dim(\Pol)$ of $\Pol$ is the dimension of the affine subspace generated by the $x_i$'s. A polytope can also be defined as a bounded set given by the intersection of a finite number of closed half-spaces (see \cite{ziegler-poly}).
\newline
\indent A face of $\Pol$ is the intersection of $\Pol$ with an affine hyperplane $\mathcal{H}$ such that $\Pol$ is entirely contained in one of the closed half-spaces defined by $\mathcal{H}$. A \emph{proper face} of $\Pol$ is a face $F$ such that $F\ne\Pol$. The \emph{dimension} of a face $F$ is the dimension of the affine space it generates. The faces of $\Pol$ of dimension 0, 1 or $\dim\Pol-1$ are called \emph{vertices}, \emph{edges} and \emph{facets}, respectively. The \emph{boundary} $\partial\Pol$ of $\Pol$ is the union of all the faces of $\Pol$ of dimension smaller than $\dim\Pol$. A point of $\Pol$ is said to be an \emph{interior point} if it doesn't belong to $\partial\Pol$. The set of $d$-faces of $\Pol$ (i.e. of $d$-dimensional faces of $\Pol$) is denoted by $\Pol_d$. Usually, we denote also $\vertices(\Pol):=\Pol_0$. When we want to stress the vertices of $F$, we write $F=[v_1,\dotsc,v_k]$ if $\{v_1,\dotsc,v_k\}=\vertices(F)=F\cap\vertices(\Pol)$.

\subsection{Finite group acting freely on $\Sph^n$, orbit polytope and fundamental domains}
\hfill\\

Let $G\subset O(n)$ be a finite group acting freely on a sphere $\Sph^{n-1}\subset\R^n$ and such that any of its orbits span $\R^n$. Fix $v_0\in\Sph^{n-1}$ and let $\Pol:=\conv(G\cdot v_0)$ be the associated \emph{orbit polytope}.
\newline
\indent Recall that, if a group $G$ acts on a topological space $X$, then a \emph{fundamental domain} for the action of $G$ on $X$ is a subset $\mathcal{D}$ of $X$ such that, for $g\ne h\in G$, the set $g\mathcal{D}\cap h\mathcal{D}$ has empty interior and the translates of $\mathcal{D}$ cover $X$, i.e. $X=\bigcup_{g\in G}g\mathcal{D}$.
\begin{theo}\emph{(\cite[6.1-6.4]{chirivi-spreafico})}\label{thm6.4}
\begin{enumerate}[label=\roman*)]
\item If $F$ and $F'$ are distinct proper faces of $\Pol$ of the same dimension, then $F\cap gF'$ has empty interior for every $1\ne g\in G$.
\item The group $G$ acts freely on the set $\Pol_d$ of $d$-dimensional faces of $\Pol$, for every $0\le d<\dim(\Pol)$. 
\item Moreover, the origin $0$ is an interior point of $\Pol$ and we have a $G$-equivariant homeomorphism
\[\begin{array}{ccc}
\partial\Pol & \stackrel{\tiny{\sim}}\to  & \Sph^{n-1} \\ x & \mapsto & {x}/{|x|}\end{array}\]
\item Given a system of representatives $F_1,\dotsc,F_r$ for the (free) action of $G$ on the set of facets of $\Pol_G$ such that the union $\bigcup_i F_i$ is connected, then this union is a fundamental domain for the action of $G$ on $\partial\Pol_G$. Furthermore, there exists such a system.
\end{enumerate}
\end{theo}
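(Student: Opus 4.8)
The plan is to establish the four parts roughly in the order (iii), (ii), (i), (iv), since each relies on the barycentric idea introduced for (iii).

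First I would prove (iii). The barycenter $b=\frac{1}{|G|}\sum_{g\in G}g v_0$ of the orbit is visibly $G$-invariant, hence fixed by every element of $G$; since $G$ acts freely on $\Sph^{n-1}$, no nonzero vector can be $G$-fixed, forcing $b=0$. As $b$ is a strictly positive convex combination of the points of $G\cdot v_0$, which span $\R^n$, the origin lies in the interior of $\Pol$. The map $x\mapsto x/|x|$ is then the standard radial homeomorphism $\partial\Pol\xrightarrow{\sim}\Sph^{n-1}$ attached to a convex body containing $0$ in its interior; $G$-equivariance is immediate because $G\subset O(n)$ preserves norms and permutes $G\cdot v_0$, hence fixes $\Pol$ setwise.

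Next, for (ii), suppose $1\ne g\in G$ stabilizes a proper face $F$ with $0\le\dim F<\dim\Pol$. Then $g$ permutes $\vertices(F)$ and, being linear, fixes their barycenter $b_F$. Because $F$ is a proper face, $b_F\in\partial\Pol$, so $b_F\ne 0$ by (iii), and $b_F/|b_F|\in\Sph^{n-1}$ is a fixed point of $g$ --- contradicting freeness. Part (i) then rests on the basic polytope fact (for which I would cite the polytope reference) that the intersection of two faces of $\Pol$ is again a face: since $g$ preserves $\Pol$, the set $gF'$ is a face of the same dimension as $F'$, so whenever $gF'\ne F$ the intersection $F\cap gF'$ is a common face of strictly smaller dimension and thus has empty relative interior; the coincidence $gF'=F$ cannot occur for $g\ne 1$ when $F=F'$ (by the freeness just proved) and is excluded in the intended application by taking $F,F'$ in distinct orbits.

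Finally, for (iv), writing $\mathcal{D}=\bigcup_i F_i$, covering is clear: every facet lies in the orbit of some $F_i$ and $\partial\Pol$ is the union of its facets, so $\bigcup_g g\mathcal{D}=\partial\Pol$. For the overlap condition, $g\mathcal{D}\cap\mathcal{D}=\bigcup_{i,j}(F_i\cap gF_j)$, and for $g\ne 1$ freeness together with the distinctness of the representatives gives $F_i\ne gF_j$, so each piece is lower-dimensional by (i) and the intersection has empty interior in $\partial\Pol$. The genuinely nontrivial point --- and the step I expect to be the main obstacle --- is the \emph{existence} of a system of representatives with connected union. Here I would introduce the facet-adjacency graph $\Gamma$, whose vertices are the facets and whose edges join facets sharing a ridge (a $(\dim\Pol-2)$-face); this graph is connected because the boundary complex of a polytope is. Since $G$ acts on $\Gamma$, freely on vertices by (ii), the quotient $\Gamma/G$ is connected, and I would lift a spanning tree of $\Gamma/G$: starting from one facet and traversing the tree, each newly encountered orbit can be represented by a facet sharing a ridge with one already chosen (an adjacent edge of $\Gamma/G$ lifts to an edge of $\Gamma$ meeting the current facet after translating by a suitable group element). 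The resulting representatives span a connected subgraph of $\Gamma$, and since facets are convex and two facets sharing a ridge have connected union, $\mathcal{D}$ is connected, as required.
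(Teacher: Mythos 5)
The paper offers no proof of this theorem to compare against: it is quoted, with attribution, from \cite[6.1--6.4]{chirivi-spreafico}, and the surrounding text states explicitly that these results are recalled from that reference for the reader's convenience. Judged on its own, your proof is correct and follows the natural route: the $G$-invariant barycenter forces $0$ to be an interior point of $\Pol$, giving the radial homeomorphism of (iii); the barycenter of a $g$-stabilized proper face would give a $g$-fixed point on $\Sph^{n-1}$, proving (ii); (i) follows because an intersection of faces of $\Pol$ is a face, so two distinct faces of equal dimension meet in a face of strictly smaller dimension; and (iv) reduces to (i)/(ii) for the overlap condition, plus your greedy, orbit-by-orbit traversal of the facet--ridge graph for the existence of a connected system of representatives. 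Two points in your write-up deserve emphasis. First, you correctly flagged that (i), read literally, fails in the case $gF'=F$ (take $F'=g^{-1}F$ with $g\neq 1$: these are distinct faces by freeness, yet $F\cap gF'=F$ has nonempty relative interior), so the statement must be read as excluding this coincidence---which is exactly what happens in the application to (iv), where the representatives lie in distinct orbits. Second, your inductive choice of each new representative adjacent to one already chosen is cleaner than literally lifting a spanning tree of $\Gamma/G$, since it sidesteps the issue of inverted edges (elements swapping two adjacent facets) that a covering-space lifting argument would have to address; the one ingredient you should cite precisely is the connectivity of the facet--ridge (dual) graph of a polytope---a standard fact, e.g. via Balinski's theorem applied to the dual polytope---rather than deducing it loosely from connectivity of the boundary complex.
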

\indent We finish this section by giving a simple but useful fact.
\begin{prop}\label{VcapVm}
Given distinct facets $F_1,\dotsc,F_r$ of $\Pol$, form their union $\mathcal{D}:=\bigcup_{i=1}^r F_i$, consider the subset $V$ of $G$ defined by $\vertices(\mathcal{D})=V\cdot v_0$ and assume that $v_0\in\bigcap_{i=1}^r \vertices(F_i)$. If $V\cap V^{-1}=\{1\}$, then the $F_i$'s belong to distinct $G$-orbits.
\newline
\indent If $r|G|=|\Pol_{n-1}|$, then $\mathcal{D}$ is a fundamental domain for the action of $G$ on $\partial\Pol$.
\end{prop}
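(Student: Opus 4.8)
The plan is to prove the two assertions separately, using Theorem \ref{thm6.4} as the main engine. For the first assertion, the goal is to show that no two of the facets $F_1,\dots,F_r$ lie in the same $G$-orbit, i.e. that $gF_i = F_j$ with $g\in G$ forces $i=j$ (and then $g=1$, by the freeness in Theorem \ref{thm6.4}(ii)). First I would translate the hypothesis $v_0\in\bigcap_i\vertices(F_i)$ into the statement that each $F_i$ contains the distinguished vertex $v_0$. Suppose now $gF_i = F_j$ for some $g\in G$ and indices $i,j$. Then $g$ sends the vertex set of $F_i$ to that of $F_j$; in particular $g v_0$ is a vertex of $F_j$, so $gv_0\in\vertices(\mathcal{D}) = V\cdot v_0$, giving $g\in V$ (using freeness of the action on $\Sph^{n-1}$, so that $gv_0 = wv_0$ implies $g=w$). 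The key observation is that the relation $gF_i=F_j$ also runs backwards: applying $g^{-1}$ gives $g^{-1}F_j = F_i$, and since $v_0\in\vertices(F_j)$ we get $g^{-1}v_0\in\vertices(\mathcal{D})$, hence $g^{-1}\in V$, i.e. $g\in V^{-1}$. Therefore $g\in V\cap V^{-1}=\{1\}$, which forces $g=1$ and thus $F_i=F_j$, i.e. $i=j$. This shows the $F_i$ are pairwise inequivalent under $G$.

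For the second assertion, I would argue by a counting/covering argument. Since the $F_i$ lie in distinct orbits and $G$ acts freely on the facet set $\Pol_{n-1}$ (Theorem \ref{thm6.4}(ii)), the orbits $G\cdot F_1,\dots,G\cdot F_r$ are pairwise disjoint and each has size $|G|$, so their union contains exactly $r|G|$ facets. The hypothesis $r|G| = |\Pol_{n-1}|$ then forces these orbits to exhaust all facets of $\Pol$, so that $\{gF_i : g\in G,\ 1\le i\le r\}$ is precisely the set of all facets, each occurring once. This means the translates $g\mathcal{D} = \bigcup_i gF_i$ together cover the whole boundary $\partial\Pol$, since every facet appears in exactly one such translate and the facets cover $\partial\Pol$. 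Moreover, for $g\ne h$ the intersection $g\mathcal{D}\cap h\mathcal{D}$ is a union of sets of the form $gF_i\cap hF_j$, each of which is an intersection of two distinct facets (distinct because each facet appears only once across all translates), hence has empty interior by Theorem \ref{thm6.4}(i). Thus $\mathcal{D}$ satisfies both defining conditions of a fundamental domain for the action of $G$ on $\partial\Pol$.

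The step I expect to require the most care is the backward direction of the first part, i.e. extracting $g\in V^{-1}$ from the relation $gF_i=F_j$; this is exactly where the hypothesis $v_0\in\bigcap_i\vertices(F_i)$ is essential, since it guarantees that both $F_i$ and $F_j$ carry the base vertex $v_0$, allowing one to read off membership in $V$ from both $g$ and $g^{-1}$. One should also take a little care, in the covering argument, to confirm that the facets cover $\partial\Pol$ in the relevant sense and that the measure-zero/empty-interior overlaps suffice for the definition of fundamental domain as stated in the excerpt; but these are routine once the orbit-exhaustion from the counting identity is in place.
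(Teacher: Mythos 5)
Your proof is correct, and its first half is essentially identical to the paper's: from $gF_i=F_j$ you extract $g\in V$ and $g^{-1}\in V$ via the common vertex $v_0$, then conclude from $V\cap V^{-1}=\{1\}$ (the paper leaves implicit the freeness point that $gv_0=wv_0$ forces $g=w$, which you rightly make explicit). Where you genuinely diverge is the second half. The paper disposes of it in one line: the count $r|G|=|\Pol_{n-1}|$ makes $F_1,\dotsc,F_r$ a system of orbit representatives, the common vertex $v_0$ makes $\mathcal{D}$ connected, and Theorem \ref{thm6.4}(iv) is then invoked. You instead verify the definition of a fundamental domain directly: the counting argument upgrades to a bijection $(g,i)\mapsto gF_i$ from $G\times\{1,\dotsc,r\}$ onto $\Pol_{n-1}$, which gives the covering, and the overlap condition follows because the translates $gF_i$ and $hF_j$ are pairwise distinct facets, whose intersections have empty interior. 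Your route is more self-contained and makes visible that, for the definition of fundamental domain used in this paper, the connectedness of $\mathcal{D}$ (a hypothesis of Theorem \ref{thm6.4}(iv), supplied in the paper by $v_0\in\bigcap_i\vertices(F_i)$) plays no role in the second assertion; the paper's route is shorter. One small imprecision: your citation of Theorem \ref{thm6.4}(i) for the overlaps only covers the terms $gF_i\cap hF_j$ with $i\ne j$, since writing $gF_i\cap hF_j=g\left(F_i\cap(g^{-1}h)F_j\right)$ requires the two faces $F_i$, $F_j$ to be distinct for (i) to apply. For the diagonal terms $gF_i\cap hF_i$ with $g\ne h$ you need instead Theorem \ref{thm6.4}(ii), which gives $(g^{-1}h)F_i\ne F_i$, together with the standard polytope fact that two distinct facets meet in a common face of dimension at most $n-2$, hence in a set with empty interior in $\partial\Pol$. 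This is routine, but should be said.
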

\begin{proof}
Suppose that there are $1\le i\ne j\le r$ and $g\in G$ such that $F_j=gF_i$. Since $v_0\in\vertices(F_i)$, we get $gv_0\in g\vertices(F_i)=\vertices(gF_i)=\vertices(F_j)$, so $g\in V$. On the other hand, $v_0\in\vertices(F_j)=g\vertices(F_i)$, hence $g^{-1}v_0\in\vertices(F_i)$, that is $g^{-1}\in V$. Therefore $g\in V\cap V^{-1}$, so $g=1$ and thus $F_i=F_j$, a contradiction.
\newline
\indent Now, the equation $r|G|=|\Pol_{n-1}|$ ensures that $F_1,\dotsc,F_r$ is a system of representatives of facets and the condition $v_0\in\bigcap_i\vertices(F_i)$ shows that $\mathcal{D}$ is connected, hence the second statement follows from the theorem \ref{thm6.4}. 
\end{proof}

\subsection{The curved join}
\hfill\\

Here, we shall define the notion of \emph{curved join}, which allows one to describe the fundamental domain for $\partial\Pol_G$ as a subset of the sphere. It will also be used to reduce the higher dimensional cases $\Sph^{4n-1}$ to $\Sph^3$. For any detail, see \cite[\S 2.4]{fgns_meta}.

Given $W_1,W_2\subset\Sph^{n-1}\subset\R^n$ such that $W_1\cap (-W_2)=\emptyset$, we define their \emph{curved join} $W_1\ast W_2$ as the projection on $\Sph^{n-1}$ of $\mathrm{conv}(W_1\cup W_2)$. For instance we have
\[\Sph^1\ast \Sph^1=\Sph^3.\]
This generalizes as follows: identifying $\C^m$ with $\R^{2m}$ and given the standard orthonormal basis $\{e_1,\dotsc,e_{2m}\}$ of $\R^{2m}$, for each $2\le r\le 2m$, denote by $\Pi_r$ the plane generated by $\{e_{r-1},e_r\}$. Suppose $\Pi_{r_1}\cap\Pi_{r_2}=0$ and let $W_1$ and $W_2$ be subsets of the unit circles of $\Pi_{r_1}$ and $\Pi_{r_2}$, respectively. Then, one can define the curved join $W_1\ast W_2$ as above. In particular, we denote by $\Sigma_k$ the unit circle lying in the $k^{\text{th}}$ copy of $\C$ in $\C^m$ and we have the following equality
\[\Sph^{2m-1}=\Sigma_1\ast\Sigma_2\ast\cdots\ast\Sigma_m.\]

Let $G\le O(n)$ be a finite group acting freely on $\Sph^{n-1}$ and let $h\in\N^*$. Then, we can make $G$ act diagonally on $\R^{hn}$.
Under the identification $\Sph^{hn-1}=\Sph^{(h-1)n-1}\ast\Sph^{n-1}$, we have $g\cdot(x\ast y)=gx\ast gy$.
\newline
\indent To compute the boundaries, we shall need the following technical result:
\begin{lem}\emph{(\cite[Lemma 2.5]{fgns_meta})}\label{bound_join}
We have the following Leibniz formula for the oriented boundary of a curved join
\[\partial(X\ast Y)=\partial X\ast Y-(-1)^{\dim X}X\ast\partial Y.\]
\end{lem}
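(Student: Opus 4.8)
The plan is to realize the curved join as the ordinary topological join through an explicit angular parametrization, and then to reduce the boundary computation to the standard Leibniz rule for the product of chains (or, to nail the signs, to the join of two oriented simplices).

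First I would record the parametrization. Since $X$ and $Y$ lie in the unit circles of planes meeting only at the origin — and, after an orthonormal change of coordinates, in orthogonal planes — every point of the curved join $X\ast Y$ is uniquely of the form
\[\gamma(x,\theta,y)=\cos\theta\,x+\sin\theta\,y,\qquad x\in X,\ y\in Y,\ \theta\in[0,\tfrac{\pi}{2}],\]
because $|\cos\theta\,x+\sin\theta\,y|^2=\cos^2\theta+\sin^2\theta=1$ by orthogonality. The map $\gamma$ is injective on the open prism $X\times(0,\tfrac{\pi}{2})\times Y$ and maps it homeomorphically onto the interior of $X\ast Y$, while at the two ends it collapses: $\gamma(x,0,y)=x$ is independent of $y$ and $\gamma(x,\tfrac{\pi}{2},y)=y$ is independent of $x$. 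Thus $\gamma$ exhibits $X\ast Y$ as the ordinary join, and its fundamental chain is the pushforward $\gamma_{*}(X\times I\times Y)$ of the product prism, with $I=[0,\tfrac{\pi}{2}]$; this simultaneously fixes the orientation convention for the join to be \emph{$X$, then the join parameter, then $Y$}.

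Next I would compute. As $\gamma_{*}$ is a chain map, $\partial(X\ast Y)=\gamma_{*}\!\left(\partial(X\times I\times Y)\right)$, and the Leibniz rule for products gives, with $p=\dim X$ and $\partial I=\{\tfrac{\pi}{2}\}-\{0\}$,
\[\partial(X\times I\times Y)=\partial X\times I\times Y+(-1)^{p}\,X\times\partial I\times Y+(-1)^{p+1}\,X\times I\times\partial Y.\]
Applying $\gamma_{*}$, the first and third terms become $\partial X\ast Y$ and $(-1)^{p+1}X\ast\partial Y$. The two end-terms $X\times\{0\}\times Y$ and $X\times\{\tfrac{\pi}{2}\}\times Y$ are collapsed by $\gamma$ onto $X$ and $Y$; they are degenerate except when $\dim Y=0$, resp. $\dim X=0$, and a short check shows they are exactly the degree $(-1)$ contributions already carried by $\partial X\ast Y$ and $(-1)^{p+1}X\ast\partial Y$ once one adopts the usual augmented convention that the empty cell $\varnothing$ is a unit for the join ($\varnothing\ast Y=Y$, $X\ast\varnothing=X$) and that the augmented boundary of a vertex is $\varnothing$. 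This produces
\[\partial(X\ast Y)=\partial X\ast Y+(-1)^{p+1}X\ast\partial Y=\partial X\ast Y-(-1)^{\dim X}X\ast\partial Y.\]

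To make the signs airtight — which I expect to be the only genuine difficulty — I would verify the identity first for a single pair of oriented simplices, where $\gamma$ identifies the curved join with the simplicial join $[v_0,\dots,v_p]\ast[w_0,\dots,w_q]=[v_0,\dots,v_p,w_0,\dots,w_q]$. Computing the boundary of this ordered simplex and splitting the alternating sum according to whether the deleted vertex is some $v_i$ or some $w_j$ recovers $\partial X\ast Y$ from the $v_i$-faces and $(-1)^{p+1}X\ast\partial Y$ from the $w_j$-faces, the shift $(-1)^{p+1}$ arising precisely from the $p+1$ vertices preceding the $w_j$'s. The general case then follows by bilinearity of the join and of the boundary over cellular chains. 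The delicate points are thus purely bookkeeping: confirming that $\gamma$ is orientation-preserving for the chosen product order, and tracking the empty-cell caps so that no spurious boundary terms survive.
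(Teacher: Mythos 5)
Your argument is correct: the prism parametrization, the triple-product Leibniz rule, the collapse of the two end-terms, and the simplicial sign check all fit together, and the augmented convention you invoke ($\partial(\text{vertex})=\varnothing$, $X\ast\varnothing=X$) is exactly what is needed for the formula to remain valid when one factor is zero-dimensional (otherwise the cone relation $\partial(X\ast w)=\partial X\ast w-(-1)^{\dim X}X$ would be violated). Note, however, that the paper itself offers no proof to compare against: the lemma is imported verbatim from \cite[Lemma 2.5]{fgns_meta}, so your write-up is a self-contained substitute for an external citation; it follows the standard route (realize the curved join as a quotient of $X\times I\times Y$, push forward the product boundary, pin the sign on ordered simplices), which is also the spirit of the proof in the cited reference.

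One inaccuracy is worth repairing. Two planes of $\R^{2m}$ meeting only at the origin cannot in general be made orthogonal by an \emph{orthonormal} change of coordinates, since orthogonal maps preserve the principal angles between subspaces. In the setting of this paper the point is moot, because the planes $\Pi_{r_1},\Pi_{r_2}$ are coordinate planes, hence automatically orthogonal as soon as they meet trivially; but the curved join is also defined, and used, for subsets $W_1,W_2\subset\Sph^{n-1}$ that are merely assumed to satisfy $W_1\cap(-W_2)=\emptyset$ (for instance the cells $1\ast\omega_i\ast\tau_j$ inside $\Sph^3$). To cover that generality, replace your trigonometric map by the radial projection $(x,t,y)\mapsto\bigl((1-t)x+ty\bigr)/\bigl|(1-t)x+ty\bigr|$, which is well defined precisely because of the hypothesis $W_1\cap(-W_2)=\emptyset$ and has the same injectivity and collapse structure on $X\times I\times Y$; your boundary computation then goes through unchanged.
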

\indent In fact, we will use the following general lemma, allowing to recursively determine a fundamental domain and an equivariant cellular decomposition on $\Sph^{hn-1}$, once we know one on $\Sph^{n-1}$.
\newline
\indent More precisely, let $G\le O(n)$ be a finite group acting freely on $\Sph^{n-1}$. Assume that $\mathcal{D}$ is a fundamental domain for the action on $\Sph^{n-1}$ and that $\widetilde{L}$ is a cellular decomposition of $\mathcal{D}$. We obtain an equivariant cell decomposition $\widetilde{K}=G\cdot\widetilde{L}$ of $\Sph^{n-1}$ and $L=\widetilde{K}/G$ is a cellular decomposition of $\Sph^{n-1}/G$. Assume further that $\widetilde{Z}$ is a subcomplex of $\widetilde{L}$ that is a minimal decomposition of $\mathcal{D}$ by lifts of the cells of $L$.
\newline
\indent Let $h\in\N^*$ and consider the diagonal action of $G$ on $\Sph^{hn-1}$. Then, a fundamental domain for this action on $\Sph^{hn-1}$ is given by
\[\mathcal{D}':=\Sph^{(h-1)n-1}\ast\mathcal{D}.\]
\indent Furthermore, we construct an equivariant cellular decomposition $\widetilde{K'}$ of $\Sph^{hn-1}$ and a minimal cellular decomposition $\widetilde{L'}$ of $\mathcal{D}'$ as follows:
\begin{enumerate}[label=$\bullet$]
\item the $(h-1)n-1$-skeleton of $\widetilde{L'}$ is $\widetilde{L'}_{(h-1)n-1}=\widetilde{K}$;
\item for the $(h-1)n$-skeleton of $\widetilde{L'}$, we attach $k_0(h-1)n$-cells to $\widetilde{K}$, where $k_0$ is the number of $0$-cells $\widetilde{e}^0_l$ of $\widetilde{Z}$ and the corresponding attaching map is given by the parametrization of the curved join $\widetilde{K}\ast\widetilde{e}^0_l$;
\item for the $(h-1)n+1$-skeleton of $\widetilde{L'}$, we attach $k_1(h-1)n+1$-cells to the $(h-1)n$-skeleton of $\widetilde{L'}$, where $k_1$ is the number of $1$-cells $\widetilde{e}^1_l$ of $\widetilde{Z}$ and the attaching map is given by the parametrization of $\widetilde{L'}_{(h-1)n}\ast\widetilde{e}^1_l$;
\item we carry on this procedure up to dimension $hn-1$.
\end{enumerate}
\indent We can summarize this in the following result.
\begin{lem}\emph{(\cite[Lemma 4.1]{fgns_meta})}\label{lem_join}
\newline
If $G\le O(n)$ is a finite group acting freely on $\Sph^{n-1}$, if $\mathcal{D}$ is a fundamental domain for this action and if $\widetilde{L}$ is a cellular decomposition of $\mathcal{D}$, with associated equivariant cellular decomposition $\widetilde{K}=G\cdot\widetilde{L}$ of $\Sph^{n-1}$, then for every $h\in\N^*$, the subset
\[\mathcal{D}':=\Sph^{(h-1)n-1}\ast\mathcal{D}\]
is a fundamental domain for the diagonal action of $G$ on $\Sph^{hn-1}$ and the above construction gives a cell decomposition $\widetilde{L'}$ of $\mathcal{D}'$, with associated equivariant cell decomposition $\widetilde{K'}:=G\cdot\widetilde{L'}$ of $\Sph^{hn-1}$.
\end{lem}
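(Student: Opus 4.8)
The plan is to establish the two assertions separately: first that $\mathcal{D}'=\Sph^{(h-1)n-1}\ast\mathcal{D}$ is a fundamental domain for the diagonal action, and then that the join construction produces a genuine regular cell decomposition, the latter by induction on $h$.

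For the fundamental-domain assertion, I would first record that, since $\Sph^{(h-1)n-1}$ is $G$-invariant and the action is diagonal with $g\cdot(x\ast y)=gx\ast gy$, we have $g\mathcal{D}'=\Sph^{(h-1)n-1}\ast g\mathcal{D}$ for every $g\in G$. To see that the translates cover $\Sph^{hn-1}=\Sph^{(h-1)n-1}\ast\Sph^{n-1}$, I would write an arbitrary point as $z=x\ast y$ with $x\in\Sph^{(h-1)n-1}$ and $y\in\Sph^{n-1}$; since $\mathcal{D}$ is a fundamental domain there is $g\in G$ with $y\in g\mathcal{D}$, whence $z\in\Sph^{(h-1)n-1}\ast g\mathcal{D}=g\mathcal{D}'$. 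For the empty-interior condition, I would observe that a point of $g\mathcal{D}'\cap h\mathcal{D}'$ lying off the seam $\Sph^{(h-1)n-1}$ has a well-defined $\Sph^{n-1}$-component in $g\mathcal{D}\cap h\mathcal{D}$, a set of empty interior in $\Sph^{n-1}$ and hence of dimension at most $n-2$. A dimension count then gives $\dim\big(\Sph^{(h-1)n-1}\ast(g\mathcal{D}\cap h\mathcal{D})\big)\le(h-1)n-1+(n-2)+1=hn-2$, and adjoining the seam (of dimension $(h-1)n-1<hn-1$) keeps the intersection of dimension strictly below $hn-1$, so it has empty interior.

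For the cell decomposition, I would argue by induction on $h$, the case $h=1$ being trivial since then $\mathcal{D}'=\mathcal{D}$ and $\widetilde{K'}=\widetilde{K}$. The central geometric fact is that the curved join $e\ast e'$ of a $p$-cell $e$ of the already-constructed equivariant decomposition of $\Sph^{(h-1)n-1}$ with a $q$-cell $e'$ of $\widetilde{Z}\subset\mathcal{D}$ is a regular $(p+q+1)$-cell: the orthogonality of the supporting planes guarantees the antipodal-disjointness needed for the curved join to be defined, and the radial projection of $\conv(\overline{e}\cup\overline{e'})$ restricts to a homeomorphism from a $(p+q+1)$-ball onto $\overline{e\ast e'}$. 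The incidence relations, in particular that the closure of each open join-cell is a union of lower-dimensional join-cells, follow from the Leibniz formula of Lemma \ref{bound_join}, which gives $\partial(e\ast e')=\partial e\ast e'\pm e\ast\partial e'$; this is exactly what is needed to verify closure-finiteness and the regular CW axioms skeleton by skeleton, matching the bullet-point construction. Equivariance is immediate from $g(e\ast e')=ge\ast ge'$, so $\widetilde{K'}:=G\cdot\widetilde{L'}$ is $G$-stable, and the first part together with the freeness of the action (Theorem \ref{thm6.4}) ensures that distinct translates of the cells of $\widetilde{L'}$ meet only along shared boundary faces, so $\widetilde{K'}$ is an honest equivariant decomposition of $\Sph^{hn-1}$ restricting to $\widetilde{L'}$ on $\mathcal{D}'$.

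The main obstacle I anticipate is the verification that the join-cells assemble into a \emph{regular} cell complex rather than merely a set-theoretic partition: one must check that each closed join-cell is an embedded ball and that these balls overlap only along common faces, which is where the degeneration of the curved join along the seam $\Sph^{(h-1)n-1}$ (where the $\mathcal{D}$-component is lost) requires the most care. The Leibniz boundary formula is the key tool controlling these incidences and keeping the induction running.
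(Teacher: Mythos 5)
The paper itself contains no proof of Lemma \ref{lem_join}: it is imported verbatim from \cite[Lemma 4.1]{fgns_meta}, so the only internal material to compare against is the bullet-point construction preceding the statement. Your first half (that $\mathcal{D}'$ is a fundamental domain) is correct and is the natural argument: equivariance of the join, covering via the $\Sph^{n-1}$-component, and empty interior of the overlaps. Two remarks. You implicitly use the fiberwise reading of the curved join, namely $\Sph^{(h-1)n-1}\ast\mathcal{D}$ as the union of the geodesic arcs from the sub-sphere to the points of $\mathcal{D}$; this is indeed the intended reading, since the literal definition as the radial projection of $\conv\bigl(\Sph^{(h-1)n-1}\cup\mathcal{D}\bigr)$ would be strictly larger whenever $\mathcal{D}$ fails to be geodesically convex (as the domains $\mathscr{F}_3$ of this paper do). Also, your dimension count rests on real dimension-theoretic facts (empty interior in an $m$-manifold forces covering dimension at most $m-1$, and $\dim(X\ast Y)\le\dim X+\dim Y+1$); these are true, but can be bypassed: off the seam, the map sending a point to its $\Sph^{n-1}$-component is open, so an interior point of $g\mathcal{D}'\cap h\mathcal{D}'$ off the seam would yield an interior point of $g\mathcal{D}\cap h\mathcal{D}$, a contradiction.

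The second half has a genuine gap: the cell structure you verify is not the one the lemma asserts, and your claim that it ``matches the bullet-point construction'' is false. You take as new cells the joins $e\ast e'$ of \emph{individual} cells $e$ of the decomposition of $\Sph^{(h-1)n-1}$ with cells $e'$ of $\widetilde{Z}$, i.e.\ the standard CW structure on a join of CW complexes (note that you would then also need the cells of the far factor $\widetilde{K}\subset\Sph^{n-1}$ as cells in their own right, which your list omits). The construction of the lemma instead attaches, for each $j$-cell $\widetilde{e}^j_l$ of $\widetilde{Z}$, a \emph{single} cell of dimension $(h-1)n+j$, namely the join of the \emph{entire} sub-sphere with $\widetilde{e}^j_l$; the geometric fact to check is $\Sph^{m}\ast\D^{j}\cong\D^{m+j+1}$, attached along $\Sph^m\ast\partial\D^j$, not that $\overline{e}\ast\overline{e'}$ is a ball (this formulation also makes any regularity hypothesis on $\widetilde{L}$ unnecessary, whereas your argument needs the closures of the seam cells to be balls, which the lemma does not assume). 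The distinction is not cosmetic: the whole point of the lemma in this paper is that $\widetilde{L'}$ has exactly one cell per cell of $\widetilde{Z}$ in each dimension $(h-1)n+j$, so that the equivariant chain complex of $\Sph^{hn-1}$ repeats the ranks of $\mathcal{K}_G$ periodically and the differential in degree $(h-1)n$ is the $1\times1$ norm matrix $\bigl(\sum_{g\in G}g\bigr)$, as in Corollaries \ref{resolforO}, \ref{resolforI}, \ref{resolforT} and Theorem \ref{SPHNOcells}. Your finer decomposition is a legitimate $G$-equivariant CW structure whose cellular complex is $\Z[G]$-chain homotopy equivalent to the paper's, but it has far more cells (its $(h-1)n$-cells alone are indexed by pairs of complementary-dimensional cells, one from the seam and one from $\widetilde{K}$), so it does not produce the periodic complexes the lemma is invoked to supply. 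To close the gap, replace your ``central geometric fact'' by: for each cell $\widetilde{e}^j_l$ of $\widetilde{Z}$ with characteristic map $\Phi:\D^j\to\Sph^{n-1}$, the map $\Sph^{(h-1)n-1}\ast\D^j\to\Sph^{hn-1}$ given by $(x,y,t)\mapsto(\cos t)\,x+(\sin t)\,\Phi(y)$ is a characteristic map of an $((h-1)n+j)$-cell whose attaching sphere $\Sph^{(h-1)n-1}\ast\partial\D^j$ lands in the previously built skeleton; then induct on $j$ and on $h$ exactly as the bullets prescribe.
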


\section{Binary spherical space forms}\label{binary}
\subsection{Binary polyhedral groups}
\hfill\\

Consider the \emph{quaternion group} $\mathcal{Q}_8:=\left<i,j\right>=\{\pm1,\pm i,\pm j,\pm k\}$, a finite subgroup of the sphere $\Sph^3$ of unit quaternions. The element $\varpi:=\tfrac 1 2 (-1+i+j+k)$ has order $3$ and normalizes $\mathcal{Q}_8$. Hence, the group 
\[\BT:=\left<i,\varpi\right>\]
has order 24, and the 16 elements of $\BT\setminus\mathcal{Q}_8$ have the form $\frac{1}{2}(\pm1\pm i\pm j\pm k)$. The group $\BT$ is the \emph{binary tetrahedral group}.

Next, the element $\gamma:=\tfrac {1}{\sqrt{2}}(1+i)$ has order $8$ and normalizes both $\mathcal{Q}_8$ and $\BT$. Hence the group 
\[\BO:=\left<\varpi,\gamma\right>\]
is of order 48 (since $\gamma^2=i$) and is called the \emph{binary octahedral group} and we have $\BO=\left<\varpi,\gamma\right>$. The set $\BO\setminus\BT$ consists of the 24 elements $\frac{1}{\sqrt{2}}(\pm u\pm v)$ where $u\ne v\in\{1,i,j,k\}$. 

Setting $\varphi:=\tfrac{1}{2}{(1+\sqrt{5})}$, the element $\sigma:=\tfrac{1}{2}(\varphi^{-1}+i+\varphi j)$ is of order $5$ hence the \emph{binary icosahedral group}
\[\BI:=\left<i,\sigma\right>\]
has order 120 and we have $\BT\le\BI$.

The universal covering map $\Sph^3 = SU(2) \twoheadrightarrow SO(3)$ can be interpreted as the action of unit quaternions on the space of purely imaginary quaternions
\[\mathrm{B} : \Sph^3 \to SO_3(\R).\]
The respective images of $\BT$, $\BO$ and $\BI$ are the rotation groups $\Alt_4$, $\Sym_4$ and $\Alt_5$ of a regular tetrahedron, octahedron and icosahedron respectively, hence the names.

It has been observed by Coxeter and Moser in \cite[\S 6.4]{coxeter-moser} that finite subgroups of $\Sph^3$ have nice presentation. Namely, denoting
\[\left<\ell,m,n\right>:=\left<r,s,t~|~r^\ell=s^m=t^n=rst\right>,\]
we have isomorphisms
\[\left<2,3,3\right>\simeq \BT,~~\left<2,3,4\right>\simeq \BO,~~\left<2,3,5\right>\simeq \BI.\]

Finally, for $n\in\N^*$ and $\mathcal{G}\in\{\BT,\BO,\BI\}$, we define the \emph{polyhedral spherical space form}
\[\mathsf{P}_\mathcal{G}^{4n-1}:=\Sph^{4n-1}/\mathcal{G}.\]

\section{The octahedral case}\label{octahedral}

In the following two sections, we let both $\BO$ and $\BI$ act (freely) by (quaternion) multiplication on the left on $\Sph^3$.
\subsection{Fundamental domain}\label{3.1}
\hfill\\

We use Theorem \ref{thm6.4} to find a fundamental domain for $\BO$ on $\Sph^3$. To this end, we first introduce the \emph{orbit polytope} in $\R^4$
\[\Pol:=\conv(\BO).\]
Then, we know that $\BO$ acts freely on the set $\Pol_3$ of facets of $\Pol$ and by Theorem \ref{thm6.4}, it suffices to find a set of representatives in $\Pol_3$ such that their union is connected; this will be a fundamental domain for the action on $\partial\Pol$, which we can transport to the sphere $\Sph^3$ using the equivariant homeomorphism $\partial\Pol \to \Sph^3$, $x \mapsto x/|x|$. 
\newline
\indent The 4-polytope $\Pol$ has 48 vertices, 336 edges, 576 faces and 288 facets and is known as the \emph{disphenoidal 288-cell}; it is dual to the bitruncated cube. Since $\BO$ acts freely on $\Pol_3$, there must be exactly six orbits in $\Pol_3$. We introduce the following elements of $\BO$, also expressed in terms of the generators $s$ and $t$ in the Coxeter-Moser presentation: 
\[\left\{\begin{array}{ll}
\omega_0:=\frac{1+i+j+k}{2}=s, \\[.5em]
\omega_i:=\frac{1-i+j+k}{2}=t^{-1}st^{-1}, \\[.5em]
\omega_j:=\frac{1+i-j+k}{2}=s^{-1}t^2, \\[.5em]
\omega_k:=\frac{1+i+j-k}{2}=t^{-1}st.\end{array}\right.~~~~\text{and}~~~~\left\{\begin{array}{ll}
\tau_i:=\frac{1+i}{\sqrt{2}}=t, \\[.5em]
\tau_j:=\frac{1+j}{\sqrt{2}}=t^{-1}s, \\[.5em]
\tau_k:=\frac{1+k}{\sqrt{2}}=st^{-1}.\end{array}\right.\]
\newline
\indent Next, we may find explicit representatives for the $\BO$-orbits of $\Pol_3$.
\begin{prop}\label{Ofunddom}
The following tetrahedra (in $\R^4$)
\[\Delta_1:=[1,\tau_i,\tau_j,\omega_0],~\Delta_2:=[1,\tau_j,\tau_k,\omega_0],~\Delta_3:=[1,\tau_k,\tau_i,\omega_0],\]
\[\Delta_4:=[1,\tau_i,\omega_k,\tau_j],~\Delta_5:=[1,\tau_j,\omega_i,\tau_k],~\Delta_6:=[1,\tau_i,\omega_j,\tau_k]\]
form a system of representatives of $\BO$-orbits of facets of $\Pol$. Furthermore, the subset of $\Pol$ defined by
\[\mathscr{D}:=\bigcup_{i=1}^6 \Delta_i\]
is a (connected) polytopal complex and is a fundamental domain for the action of $\BO$ on $\partial\Pol$.
\end{prop}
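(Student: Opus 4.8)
The plan is to verify the hypotheses of Proposition \ref{VcapVm}, which reduces the whole claim to a finite check. By that proposition, it suffices to exhibit six \emph{distinct} facets $\Delta_1,\dotsc,\Delta_6$ of $\Pol$, all containing the common vertex $v_0=1$, such that the associated subset $V\subset\BO$ (defined by $\vertices(\mathscr{D})=V\cdot 1=V$) satisfies $V\cap V^{-1}=\{1\}$, together with the counting condition $r|G|=6\cdot 48=288=|\Pol_3|$. The counting condition is immediate from the stated face numbers of the disphenoidal $288$-cell.

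First I would confirm that each $\Delta_\ell$ is genuinely a facet of $\Pol=\conv(\BO)$. Since $\Pol$ is inscribed in $\Sph^3$ and every $\Delta_\ell$ is the convex hull of four points of $\BO$, I would check for each $\ell$ that these four points are affinely independent (spanning a $3$-dimensional affine subspace) and that they lie on a supporting hyperplane of $\Pol$ — concretely, that there is a unit vector $n_\ell\in\R^4$ and a scalar $c_\ell$ with $\langle n_\ell,x\rangle=c_\ell$ for the four vertices of $\Delta_\ell$ and $\langle n_\ell,g\rangle<c_\ell$ for every other $g\in\BO$. In practice the supporting hyperplane is spanned by the four vertices, and $n_\ell$ is their common normal; since all $48$ group elements are explicit unit quaternions, this is a direct (if tedious) linear-algebra computation that I would not grind out here. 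One also checks that no further element of $\BO$ lies on the hyperplane, so that $\vertices(\Delta_\ell)=\Delta_\ell\cap\vertices(\Pol)$ consists of exactly the four listed points, confirming each $\Delta_\ell$ is a $3$-simplex facet.

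Next I would read off $V$. Each $\Delta_\ell$ contains $1=v_0$, and its other three vertices are explicit elements of $\BO$ (the $\tau$'s and $\omega$'s, plus the repeated appearances across the six tetrahedra). Collecting all vertices that occur gives
\[V=\{1,\ \tau_i,\ \tau_j,\ \tau_k,\ \omega_0,\ \omega_i,\ \omega_j,\ \omega_k\}\subset\BO.\]
The key finite verification is then $V\cap V^{-1}=\{1\}$: using the quaternion formulas, $\tau_u^{-1}=\tfrac1{\sqrt2}(1-u)$ for $u\in\{i,j,k\}$ and $\omega_\bullet^{-1}=\ol{\omega_\bullet}$ (the conjugate, since these are unit quaternions), and none of these inverses is again of the form listed in $V$ — the inverses involve a minus sign on the appropriate imaginary part and so fall in the complementary set. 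I would tabulate the eight elements of $V$ and their inverses and observe that the only common element is the identity. Granting this, Proposition \ref{VcapVm} immediately gives that the $\Delta_\ell$ lie in distinct $\BO$-orbits, and since $6\cdot 48=288=|\Pol_3|$, they form a full system of orbit representatives and $\mathscr{D}=\bigcup_\ell\Delta_\ell$ is a fundamental domain for $\BO$ on $\partial\Pol$.

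The main obstacle is not conceptual but combinatorial: certifying that each of the six explicit $4$-tuples really spans a facet of the $288$-cell, i.e. that the associated hyperplane is supporting and meets $\BO$ in precisely those four points. Connectivity of $\mathscr{D}$ is automatic once $1$ is a common vertex (as Proposition \ref{VcapVm} notes), so the remaining work is the $V\cap V^{-1}=\{1\}$ check, which is a short explicit computation. I would organize the facet verification by exploiting the symmetry of the construction — the three $\Delta_1,\Delta_2,\Delta_3$ are cyclically permuted by the order-$3$ element $\varpi$ (equivalently by the $i\to j\to k$ symmetry), and similarly for $\Delta_4,\Delta_5,\Delta_6$ — so that only two essentially different hyperplane computations need to be carried out by hand, the rest following by applying the relevant symmetry of $\BO$.
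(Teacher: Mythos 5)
Your proposal is correct, and it rests on the same key lemma as the paper: Proposition \ref{VcapVm}, applied with the common vertex $1$, the set $V=\{1,\tau_i,\tau_j,\tau_k,\omega_0,\omega_i,\omega_j,\omega_k\}$, the inverse check $V\cap V^{-1}=\{1\}$ (your computation of the inverses matches the paper's), and the count $6\cdot|\BO|=288=|\Pol_3|$. The genuine difference lies in how the six tetrahedra are certified to be facets of $\Pol$. The paper works globally: it writes down two seed normal vectors $v_1,v_2$, verifies that the inequalities $\left<v_m,x\right>\le 1$ are valid on $\BO$, propagates them by the signed-permutation group $\{\pm1\}^4\rtimes\Alt_4$ (which preserves $\conv(\BO)$) into $288$ valid inequalities, and invokes the known facet count to conclude that these are exactly the facet-defining inequalities; the $\Delta_\ell$ and $\vertices(\mathscr{D})$ are then read off from the equality cases. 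You work locally: for each claimed facet you compute the hyperplane spanned by its four vertices and check that it is supporting with no fifth element of $\BO$ on it, reducing six checks to two by the order-three symmetry. One point you should make explicit there: the symmetry you need is \emph{conjugation} by $\varpi$ (equivalently by $\omega_0$), which is an isometry of $\R^4$ preserving $\BO$, fixing $1$ and $\omega_0$ and cycling the subscripts $i,j,k$, hence permuting $\{\Delta_1,\Delta_2,\Delta_3\}$ and $\{\Delta_4,\Delta_5,\Delta_6\}$; left translation by $\varpi$ --- the action under which the orbits of the proposition are taken --- moves the vertex $1$ and maps no $\Delta_\ell$ to another, so ``permuted by $\varpi$'' must be read as conjugation, as your parenthetical ``$i\to j\to k$ symmetry'' suggests you intend. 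Both routes lean on the externally quoted fact $|\Pol_3|=288$: the paper twice (to close its facet enumeration and in the final count), you only once, in the counting hypothesis of Proposition \ref{VcapVm}. What each buys: the paper's computation yields the complete facet description of the disphenoidal $288$-cell, which is what makes the search recipe of Remark \ref{methO} practical and reusable; your version is leaner, verifying only what Proposition \ref{VcapVm} requires, at comparable computational cost (each of your two hyperplanes still requires checking the remaining $44$ strict inequalities over $\BO$).
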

\begin{proof}
First, we have to find the facets of $\Pol$ by giving the defining inequalities. To do this, we make the group $\{\pm1\}^4\rtimes\Sym_4$ act on $\R^4$ by signed permutations of coordinates. Let
\[v_1:=\left(\begin{smallmatrix}3-2\sqrt{2} \\ \sqrt{2}-1 \\ \sqrt{2}-1 \\ 1\end{smallmatrix}\right),~~v_2:=\left(\begin{smallmatrix}2-\sqrt{2} \\ 2-\sqrt{2} \\ 2\sqrt{2}-2 \\ 0\end{smallmatrix}\right).\]
By invariance of $\Pol$, to prove that the 288 inequalities $\left<v,x\right>\le1$, with $v\in (\{\pm1\}^4\rtimes\Alt_4)\cdot\{v_1,v_2\}$, are valid for $\Pol$, it suffices to check the two inequalities $\left<v_i,x\right>\le1$, for $i=1,2$.
As there are indeed 288 conditions, we have in fact all of them, hence the facets are given by the equalities $\left<v,x\right>=1$ and we find their vertices by looking at vertices of $\Pol$ that satisfy these equalities. We find
\[\vertices(\mathscr{D})=\{1,\tau_i,\tau_j,\tau_k,\omega_i,\omega_j,\omega_k,\omega_0\}.\]
Now, since $\R^4=\mathrm{span}(\BO)$ and $\vertices(\mathscr{D})\cap\vertices(\mathscr{D})^{-1}=\{1\}$, Proposition \ref{VcapVm} ensures that $\mathscr{D}$ is indeed a fundamental domain for $\partial\Pol$.
\end{proof}
\begin{rem}\label{methO}
The recipe used to find these tetrahedra is quite simple. First, choose $\Delta_1$ in some $\BO$-orbit of $\partial\Pol_3$ and containing $1$ as a vertex. Then, we arbitrarily choose another orbit and look at the dimensions of the intersections of $\Delta_1$ with the facets of this second orbit. There is exactly one facet (namely $\Delta_2$) for which the intersection has dimension 2 and we continue further until we obtain representatives for the six orbits. Hence, a lot of different fundamental domains can be produced in this way. The calculations can be done using the Maple package ``Convex'' (see \cite{convex}) and quaternionic multiplication, as in \cite{GAP4}.
\end{rem}

It should be noted that all the figures displayed in the sequel only reflect the combinatorics of the polytopes we consider, not the metric they carry as subsets of $\Sph^3$.

\begin{center}
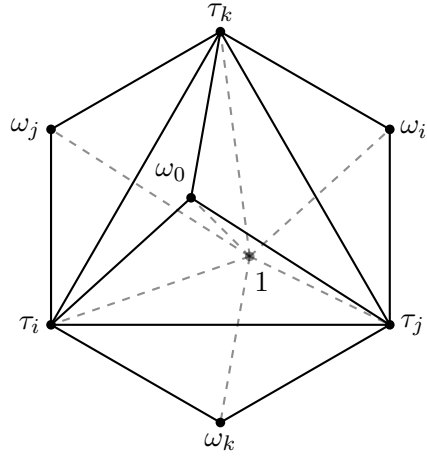
\begin{figure}[h!]
\begin{tikzpicture}[scale=2.6,rotate=90,thick]
  \coordinate (ti) at (-0.5,0.866);
  \coordinate (tk) at (1,0);
  \coordinate (ok) at (-1,0);
  \coordinate (oi) at (0.5,-0.866);
  \coordinate (oj) at (0.5,0.866);
  \coordinate (tj) at (-0.5,-0.866);
  \coordinate (u) at (-0.15,-0.15);
  \coordinate (o0) at (0.15,0.15);
  
  \draw (-0.28,-0.12) node[right]{$1$};
  \draw (oi) node[right]{$\omega_i$};
  \draw (oj) node[left]{$\omega_j$};
  \draw (ok) node[below]{$\omega_k$};
  \draw (ti) node[left]{$\tau_i$};
  \draw (tj) node[right]{$\tau_j$};
  \draw (tk) node[above]{$\tau_k$};
  \draw (0.28,0.12) node[left]{$\omega_0$};
  
  \draw (oj)--(tk);
  \draw (tk)--(oi);
  \draw (oi)--(tj);
  \draw (tj)--(ok);
  \draw (ok)--(ti);
  \draw (ti)--(oj);
  \draw (ti)--(o0);
  \draw (tk)--(o0);
  \draw (tj)--(o0);
  \draw[dashed,opacity=0.45] (u)--(ti);
  \draw[dashed,opacity=0.45] (u)--(tj);
  \draw[dashed,opacity=0.45] (u)--(tk);
  \draw[dashed,opacity=0.45] (u)--(oi);
  \draw[dashed,opacity=0.45] (u)--(oj);
  \draw[dashed,opacity=0.45] (u)--(ok);
  \draw[dashed,opacity=0.45] (u)--(o0);
  \draw (ti)--(tj);
  \draw (tj)--(tk);
  \draw (tk)--(ti);
  
  \fill[fill=black] (ti) circle (0.7pt);
  \fill[fill=black] (tj) circle (0.7pt);
  \fill[fill=black] (tk) circle (0.7pt);
  \fill[fill=black] (oi) circle (0.7pt);
  \fill[fill=black] (oj) circle (0.7pt);
  \fill[fill=black] (ok) circle (0.7pt);
  \fill[fill=black,opacity=0.45] (u) circle (0.7pt);
  \fill[fill=black] (o0) circle (0.7pt);
\end{tikzpicture}
\caption{The six tetrahedra inside $\mathscr{D}$.}
\end{figure}
\end{center}

\subsection{Associated $\BO$-equivariant cellular decomposition of $\partial\Pol$}\label{3.2}
\hfill\\

We shall now examine the combinatorics of the polytopal complex $\mathscr{D}$ constructed in the previous subsection to obtain a cellular decomposition of it. Since $\mathscr{D}$ is a fundamental domain for $\BO$ on $\partial\Pol$, translating the cells will give an equivariant decomposition of $\partial\Pol$ and projecting to $\Sph^3$ will give the desired equivariant cellular structure on the sphere.
\newline
\indent The facets of $\mathscr{D}$ are the ones of the six tetrahedra $\Delta_i$, except those that are contained in some intersection $\Delta_i\cap\Delta_j$. We obtain the following facets
\[\mathscr{D}_2=\{[1,\tau_j,\omega_i],[1,\omega_i,\tau_k],[1,\tau_k,\omega_j],[1,\omega_j,\tau_i],[1,\tau_i,\omega_k],[1,\omega_k,\tau_j],[\tau_j,\omega_i,\tau_k],\]
\[[\tau_k,\omega_j,\tau_i],[\tau_i,\omega_k,\tau_j],[\tau_i,\tau_j,\omega_0],[\tau_j,\tau_k,\omega_0],[\tau_k,\tau_i,\omega_0]\}.\]
\newline
\indent We notice the following relations
\[\left\{\begin{array}{ll}
\tau_i\cdot [1,\tau_j,\omega_i]=[\tau_i,\omega_0,\tau_k], \\
\tau_i\cdot [1,\omega_i,\tau_k]=[\tau_i,\tau_k,\omega_j], \end{array}\right.~~\left\{\begin{array}{ll}
\tau_j\cdot [1,\tau_i,\omega_j]=[\tau_j,\omega_k,\tau_i], \\
\tau_j\cdot [1,\omega_j,\tau_k]=[\tau_j,\tau_i,\omega_0], \end{array}\right.~~\left\{\begin{array}{ll}
\tau_k\cdot [1,\tau_j,\omega_k]=[\tau_k,\omega_i,\tau_j], \\
\tau_k\cdot [1,\omega_k,\tau_i]=[\tau_k,\tau_j,\omega_0]. \end{array}\right.\]
\newline
\indent These are the only relations linking facets, hence, we may gather facets two by two and define the following 2-cells and 1-cells, respectively
\[e^2_1:=]\tau_j,1,\omega_i[~\cup~]1,\omega_i[~\cup~]1,\omega_i,\tau_k[,~e^2_2:=]\tau_i,1,\omega_j[~\cup~]1,\omega_j[~\cup~]1,\omega_j,\tau_k[,~e^2_3:=]\tau_i,1,\omega_k[~\cup~]1,\omega_k[~\cup~]1,\omega_k,\tau_j[,\]
\[e^1_1:=]1,\tau_i[,~e^1_2:=]1,\tau_j[,~e^1_3:=]1,\tau_k[,\]
recalling that, for a polytope $[v_1,\dotsc,v_n]:=\conv(v_1,\dotsc,v_n)$, we denote by $]v_1,\dotsc,v_n[$ its \emph{interior}, namely its maximal face.
\newline
\indent If we add vertices of $\mathscr{D}$ and its interior, which is formed by only one cell $e^3$ by construction, then we may cover all of $\mathscr{D}$ with these cells and some of their translates. Thus, we have obtained the
\begin{lem}\label{ODcells}
Consider the following sets of cells in $\mathscr{D}$
\[\left\{\begin{array}{ll}
E^0_\mathscr{D}:=\{1,~\tau_i,~\tau_j,~\tau_k,~\omega_i,~\omega_j,~\omega_k\}, \\[.5em] 
E^1_\mathscr{D}:=\{e^1_1,~\tau_j e^1_1,~\tau_k e^1_1,~\omega_i e^1_1,~e^1_2,~\tau_i e^1_2,~\tau_k e^1_2,~\omega_j e^1_2,~e^1_3,~\tau_i e^1_3,~\tau_j e^1_3,~\omega_k e^1_3\}, \\[.5em] 
E^2_\mathscr{D}:=\{e^2_1,~\tau_i e^2_1,~e^2_2,~\tau_j e^2_2,~e^2_3,~\tau_k e^2_3\}, \\[.5em] 
E^3_\mathscr{D}:=\{e^3\} \end{array}\right.\]
Then, one has the following cellular decomposition of the fundamental domain 
\[\mathscr{D}=\coprod_{\substack{0\le j \le 3 \\ e \in E^j_\mathscr{D}}}e.\]
\end{lem}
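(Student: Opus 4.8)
The plan is to verify directly that the finitely many explicitly listed cells partition $\mathscr{D}$ by a careful bookkeeping of the face lattice of the six tetrahedra $\Delta_1,\dots,\Delta_6$. Since $\mathscr{D}$ is a finite polytopal complex whose maximal faces are the six tetrahedra, every point of $\mathscr{D}$ lies in the relative interior of exactly one face of the complex. The disjointness in $\mathscr{D}=\coprod e$ is then immediate once we confirm that the listed open cells are pairwise disjoint and that their union is all of $\mathscr{D}$; the real content is the counting, so I would organize the argument dimension by dimension.

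First I would record the full face data of $\mathscr{D}$. The $0$-faces are the eight vertices $\vertices(\mathscr{D})=\{1,\tau_i,\tau_j,\tau_k,\omega_i,\omega_j,\omega_k,\omega_0\}$ found in Proposition~\ref{Ofunddom}. I would then enumerate the edges ($1$-faces) and the triangular $2$-faces of the six tetrahedra, discarding those lying in an intersection $\Delta_i\cap\Delta_j$ (these internal faces are not facets of the complex $\mathscr{D}$), exactly as the text does when producing the facet list $\mathscr{D}_2$. The six translation relations displayed just before the lemma are the key input: they say that the twelve surviving triangles pair up under multiplication by $\tau_i,\tau_j,\tau_k$ into six $\BO$-translate pairs, and likewise the relevant edges pair up. Grouping each pair (plus the open edge shared between them) yields the three open $2$-cells $e^2_1,e^2_2,e^2_3$ and three open $1$-cells $e^1_1,e^1_2,e^1_3$; their listed $\BO$-translates in $E^1_{\mathscr{D}},E^2_{\mathscr{D}}$ account for all surviving open edges and open triangles of $\mathscr{D}$.

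The disjoint-union claim I would then check by matching cell counts against the decomposition of each $\Delta_i$ and using that distinct relatively open faces of a polytopal complex are disjoint. Concretely: the unique open $3$-cell $e^3$ is the interior of $\mathscr{D}$ (the text asserts $\mathscr{D}$ has connected interior, being a fundamental domain with $0$ as interior point of $\Pol$ by Theorem~\ref{thm6.4}(iii)), so it is disjoint from $\partial\mathscr{D}$; the open $2$-cells partition the $2$-dimensional part of $\partial\mathscr{D}$ because each is a union of open triangles and one open edge, and these building blocks are mutually disjoint and exhaust the non-internal $2$-faces; similarly the open $1$-cells and vertices cover what remains. Summing the cardinalities $|E^0_{\mathscr{D}}|+\dots$ against the total face count of $\mathscr{D}$ gives the bookkeeping check that nothing is missed.

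The main obstacle will be the honest verification of the six highlighted translation relations and, more subtly, the claim that these are the \emph{only} relations identifying boundary faces of $\mathscr{D}$, so that no two of the listed open cells secretly coincide or overlap. This is where one must use freeness of the $\BO$-action on faces (Theorem~\ref{thm6.4}(i)--(ii)): if two listed open cells met, their closures would share a face, forcing a group element identifying faces of the same dimension with nonempty-interior intersection, contradicting part~(i). I expect each individual relation such as $\tau_i\cdot[1,\tau_j,\omega_i]=[\tau_i,\omega_0,\tau_k]$ to reduce to a routine quaternion multiplication on the four vertices, best relegated to a computer-algebra check as the authors indicate in Remark~\ref{methO}, so that the proof proper is the combinatorial assembly rather than the arithmetic.
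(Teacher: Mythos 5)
Your strategy is exactly the paper's own: the paper gives no formal proof, and the discussion preceding the lemma (enumerate the faces of the complex $\mathscr{D}$, verify the six translation relations by quaternion arithmetic, glue the twelve boundary triangles pairwise into the $2$-cells, then add the $1$-cells, the vertices and the interior) is precisely the combinatorial assembly you describe. However, the step you defer to bookkeeping --- ``summing the cardinalities against the total face count of $\mathscr{D}$ gives the check that nothing is missed'' --- does not close; carried out honestly, it shows that the cell list in the statement does \emph{not} cover $\mathscr{D}$. The complex $\mathscr{D}$ has $8$ vertices (Proposition \ref{Ofunddom}: $\vertices(\mathscr{D})=\{1,\tau_i,\tau_j,\tau_k,\omega_i,\omega_j,\omega_k,\omega_0\}$), $19$ edges, $18$ triangles and $6$ tetrahedra. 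The listed cells cover: the $6$ open tetrahedra, the $6$ interior triangles and the open edge $]1,\omega_0[$ (all inside $e^3$), the $12$ boundary triangles and $6$ more open edges (inside the six $2$-cells of $E^2_\mathscr{D}$), the $12$ open edges of $E^1_\mathscr{D}$, and the $7$ points of $E^0_\mathscr{D}$. Edges ($12+6+1=19$), triangles ($12+6=18$) and tetrahedra match, but the vertex count is $7$ against $8$: the vertex $\omega_0$ lies in no listed cell. Indeed, every cell in $E^1_\mathscr{D}$ and $E^2_\mathscr{D}$ is a union of relatively open faces of $\Pol$ of positive dimension, so it contains no vertex; and $\omega_0\notin e^3=\interior{\mathscr{D}}$, because $\omega_0$ is a boundary point of $\mathscr{D}$: each vertex of $\Pol$ lies in $24$ facets (cf.\ Figure \ref{octarep}), while only $\Delta_1,\Delta_2,\Delta_3$ among them belong to $\mathscr{D}$.

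So the union of the cells listed in the lemma is $\mathscr{D}\setminus\{\omega_0\}$, and a proof that asserts the verification succeeds has a genuine gap at exactly that point. The repair is harmless but must be made explicit: $E^0_\mathscr{D}$ should also contain $\omega_0$, which is consistent with the sentence preceding the lemma (``add vertices of $\mathscr{D}$'', i.e.\ all eight of them) and with the equivariant structure, since $\omega_0=\tau_i\tau_j\cdot 1$ is a $\BO$-translate of the $0$-cell $1$; nothing downstream changes, as Proposition \ref{DPOcells} and the chain complex of Proposition \ref{Ochain} only use the orbit representatives $E^0=\{1\}$, $E^1$, $E^2$, $E^3$. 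With that correction, the rest of your argument --- pairwise disjointness because the constituents are relative interiors of distinct faces of the polytopal complex, distinctness of the listed translates checked by the explicit quaternion products, exhaustion by the face count --- is sound and is the paper's own argument; your appeal to Theorem \ref{thm6.4}(i)--(ii) is not even needed for disjointness, only for the (already established) fact that the six $\Delta_i$ lie in distinct orbits.
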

\begin{center}
\begin{figure}[h!]
\begin{tikzpicture}[scale=2.6,rotate=90,thick]
  \coordinate (ti) at (-0.5,0.866);
  \coordinate (tk) at (1,0);
  \coordinate (ok) at (-1,0);
  \coordinate (oi) at (0.5,-0.866);
  \coordinate (oj) at (0.5,0.866);
  \coordinate (tj) at (-0.5,-0.866);
  \coordinate (u) at (-0.15,-0.15);
  \coordinate (o0) at (0.15,0.15);
  
  \draw (u) node[below]{$1$};
  \draw (oi) node[right]{$\omega_i$};
  \draw (oj) node[left]{$\omega_j$};
  \draw (ok) node[below]{$\omega_k$};
  \draw (ti) node[left]{$\tau_i$};
  \draw (tj) node[right]{$\tau_j$};
  \draw (tk) node[above]{$\tau_k$};
  \draw (o0) node[left]{$\omega_0$};
  
  \draw[decoration={markings, mark=at position 0.5 with {\arrow{>}}},postaction={decorate},color=green] (oj)--(tk);
  \draw[decoration={markings, mark=at position 0.5 with {\arrow{>}}},postaction={decorate},color=green] (tk)--(oi);
  \draw[decoration={markings, mark=at position 0.5 with {\arrow{>}}},postaction={decorate},color=red] (oi)--(tj);
  \draw[decoration={markings, mark=at position 0.5 with {\arrow{>}}},postaction={decorate},color=red] (tj)--(ok);
  \draw[decoration={markings, mark=at position 0.5 with {\arrow{>}}},postaction={decorate},color=blue] (ok)--(ti);
  \draw[decoration={markings, mark=at position 0.5 with {\arrow{>}}},postaction={decorate},color=blue] (ti)--(oj);
  \draw[decoration={markings, mark=at position 0.5 with {\arrow{>}}},postaction={decorate},color=green] (ti)--(o0);
  \draw[decoration={markings, mark=at position 0.5 with {\arrow{>}}},postaction={decorate},color=red] (tk)--(o0);
  \draw[decoration={markings, mark=at position 0.5 with {\arrow{>}}},postaction={decorate},color=blue] (tj)--(o0);
  \draw[decoration={markings, mark=at position 0.5 with {\arrow{>}}},postaction={decorate},color=red,dashed] (u)--(ti);
  \draw[decoration={markings, mark=at position 0.5 with {\arrow{>}}},postaction={decorate},color=green,dashed] (u)--(tj);
  \draw[decoration={markings, mark=at position 0.5 with {\arrow{>}}},postaction={decorate},color=blue,dashed] (u)--(tk);
  
  \draw[rotate=30] (1.1,0) node{$\omega_j e^1_2$};
  \draw[rotate=90] (1.1,0) node{$\tau_i e^1_3$};
  \draw[rotate=150] (1.1,0) node{$\omega_k e^1_3$};
  \draw[rotate=210] (1.1,0) node{$\tau_j e^1_1$};
  \draw[rotate=270] (1.1,0) node{$\omega_i e^1_1$};
  \draw[rotate=330] (1.1,0) node{$\tau_k e^1_2$};
  \draw (0.6,0.3) node{$\tau_k e^1_1$};
  \draw (0.1,0.65) node{$\tau_i e^1_2$};
  \draw (0.1,-0.4) node{$\tau_j e^1_3$};
  \draw (-0.4,0.3) node{$e^1_1$};
  \draw (-0.4,-0.3) node{$e^1_2$};
  \draw (0.5,-0.15) node{$e^1_3$};
  
  \fill[fill=black] (ti) circle (0.7pt);
  \fill[fill=black] (tj) circle (0.7pt);
  \fill[fill=black] (tk) circle (0.7pt);
  \fill[fill=black] (oi) circle (0.7pt);
  \fill[fill=black] (oj) circle (0.7pt);
  \fill[fill=black] (ok) circle (0.7pt);
  \fill[fill=black] (u) circle (0.7pt);
  \fill[fill=black] (o0) circle (0.7pt);
\end{tikzpicture}
\caption{The 1-skeleton of $\mathscr{D}$.}
\label{1skelO}
\end{figure}
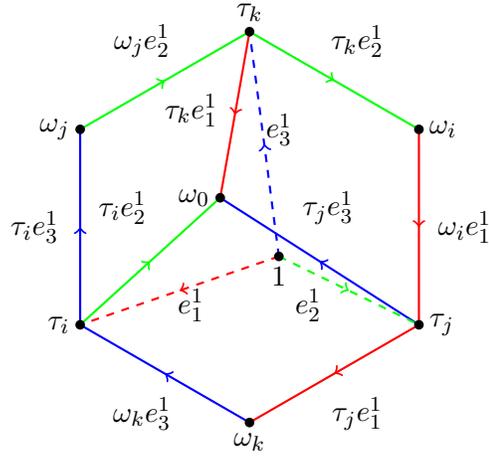
\end{center}
\indent Then, combining Proposition \ref{Ofunddom} and Lemma \ref{ODcells}, yields the following result:
\begin{prop}\label{DPOcells}
Letting $E^0:=\{1\}$, $E^1:=\{e^1_i,~i=1,2,3\}$, $E^2:=\{e^2_i,~i=1,2,3\}$ and $E^3:=\{e^3\}$ with the above notations, we have the following $\BO$-equivariant cellular decomposition of $\partial\Pol$
\[\partial\Pol=\coprod_{\substack{0\le j \le 3 \\ e\in E^j,g\in\BO}} ge.\]
As a consequence, using the homeomorphism $\phi : \partial\Pol \stackrel{\tiny{\sim}}\to \Sph^3$ given by $x \mapsto x/|x|$, we obtain the following $\BO$-equivariant cellular decomposition of the sphere
\[\Sph^3=\coprod_{\substack{0\le j \le 3 \\ e\in E^j,g\in\BO}} g\phi(e).\]
\end{prop}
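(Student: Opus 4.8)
The plan is to assemble the global decomposition from the two preceding results: Proposition \ref{Ofunddom} identifies $\mathscr{D}$ as a fundamental domain for $\BO$ acting on $\partial\Pol$, while Lemma \ref{ODcells} furnishes a cell decomposition of $\mathscr{D}$ in which each cell is an explicit $\BO$-translate of a base cell from $E^0\cup E^1\cup E^2\cup E^3$. Granting these, the equivariant decomposition of $\partial\Pol$ is obtained by translating the cells of $\mathscr{D}$ over all of $\BO$ and verifying that the result is a disjoint cover.

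The first point I would settle is that the eight base cells lie in pairwise distinct $\BO$-orbits, since otherwise the proposed union would over-count. Cells of different dimensions are separated automatically, as $\BO$ acts by dimension-preserving homeomorphisms. For the $1$- and $2$-cells I argue from the vertex structure: since $v_0=1$ and the action is left multiplication we have $g\cdot1=g$, so an identity $g\,e^1_1=e^1_2$ forces $g\{1,\tau_i\}=\{1,\tau_j\}$, leaving only $g=1$ or $g=\tau_j$ with $\tau_j\tau_i=1$; the computation $\tau_j\tau_i=\omega_k$ (and its cyclic analogues) rules out the second. Applying the same reasoning to the central edge $]1,\omega_i[$ shared by the two triangles of $e^2_1$ separates the three $2$-cells, and shows more generally that the triangles, edges and vertices underlying distinct base cells lie in distinct $\BO$-orbits of faces of $\Pol$.

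With distinctness in hand, covering and disjointness are then short. For covering, every point of $\mathscr{D}$ lies in some cell $h\,e$ of Lemma \ref{ODcells} with $e$ a base cell and $h\in\BO$, and since $\partial\Pol=\bigcup_{g\in\BO}g\mathscr{D}$ by the fundamental-domain property, every point of $\partial\Pol$ lies in a translate $(gh)\,e$; hence $\bigcup_{g,e}g\,e=\partial\Pol$. For disjointness, observe that when $\dim e\le 2$ the open cell $g\,e$ is a union of relative interiors of faces of $\Pol$, whereas $g\,e^3$ is the interior of the ball $g\mathscr{D}$. Relative interiors of distinct faces of $\Pol$ are disjoint, and by the fundamental-domain property the interiors of distinct translates $g\mathscr{D}$ are disjoint; moreover, by Theorem \ref{thm6.4}(ii) the $\BO$-action on faces is free, so $g\,e=g'\,e$ forces $g=g'$. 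Combining these with the distinctness of the underlying face-orbits established above, one finds that $g\,e$ and $g'\,e'$ meet only when $(g,e)=(g',e')$, giving the disjoint union $\partial\Pol=\coprod_{0\le j\le 3,\,e\in E^j,\,g\in\BO}g\,e$.

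It remains to promote this set partition to a regular CW structure and to transport it to the sphere. As recalled in the introduction, a decomposition whose cells are the open faces of a polytopal complex is automatically regular: the closure of $e^3$ is the ball $\mathscr{D}$, the closure of each $e^2_a$ is a pair of triangles glued along an edge (hence a disk), and the remaining closures are faces of $\Pol$, so every cell is attached along a sphere. Finally, Theorem \ref{thm6.4}(iii) supplies the $\BO$-equivariant homeomorphism $\phi\colon\partial\Pol\to\Sph^3$, $x\mapsto x/|x|$; since $\phi(g\,e)=g\,\phi(e)$, pushing the decomposition forward yields the asserted equivariant decomposition of $\Sph^3$. The main obstacle I anticipate is precisely the bookkeeping behind the grouping in Lemma \ref{ODcells}: one must check that merging the six tetrahedra of $\mathscr{D}$ into a single $3$-cell and pairing the twelve boundary triangles into the six $2$-cells is compatible with the global face lattice of $\Pol$, so that the translates $g\mathscr{D}$ glue along shared facets without conflicting cell identifications. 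This compatibility is exactly what the relations $\tau_i[1,\tau_j,\omega_i]=[\tau_i,\omega_0,\tau_k]$ and their analogues record, and the concrete verification is that the six $2$-cells use each of the twelve boundary triangles of $\mathscr{D}$ exactly once.
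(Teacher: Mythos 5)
Your proposal follows the same route as the paper, which obtains this proposition directly by combining Proposition \ref{Ofunddom}, Lemma \ref{ODcells} and the equivariant homeomorphism of Theorem \ref{thm6.4}; your orbit-separation computations (e.g.\ $\tau_j\tau_i=\omega_k$) are correct, and the overall structure (distinctness of orbits, covering, disjointness, regularity, transport to $\Sph^3$) is the right one.

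There is, however, one case your disjointness argument does not actually cover: a low-dimensional cell against a $3$-cell. You separate low-dimensional cells from each other via ``relative interiors of distinct faces of $\Pol$ are disjoint'', and $3$-cells from each other via the fundamental-domain property, but neither fact separates $g'e'$ ($\dim e'\le 2$) from $g\,e^3$, because $\mathrm{int}(g\mathscr{D})$ is \emph{not} just a union of open tetrahedra: it also contains the relative interiors of the six internal triangles $g(\Delta_i\cap\Delta_j)$ (such as $g[1,\tau_j,\omega_0]$ and $g[1,\tau_i,\tau_j]$) and of the internal edge $g\,]1,\omega_0[$, and these are themselves relative interiors of faces of $\Pol$. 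So one must still rule out that a boundary face of $\mathscr{D}$ is $\BO$-equivalent to an internal face of $\mathscr{D}$. This is true and can be closed with tools you already have, in either of two ways: (a) observe that $g'e'\subset g'\mathscr{D}$ and that $g'\mathscr{D}$ is the closure of its interior, so the open set $\mathrm{int}(g\mathscr{D})$ meeting $g'\mathscr{D}$ forces $\mathrm{int}(g\mathscr{D})\cap\mathrm{int}(g'\mathscr{D})\ne\emptyset$, hence $g=g'$ by the fundamental-domain property, after which the partition of Lemma \ref{ODcells} gives $e^3\cap e'=\emptyset$; or (b) run your vertex argument on these extra pairs, e.g.\ $g[1,\tau_j,\omega_i]=[1,\tau_j,\omega_0]$ forces $g\in\{1,\tau_j,\omega_0\}\cap\{1,\tau_j^{-1},\omega_i^{-1}\}=\{1\}$, a contradiction. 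A second, smaller, uncovered case of the same nature: two translates $ge^2_1$ and $g'e^2_1$ of the \emph{same} $2$-cell could a priori meet through $gT_1=g'T_2$, where $T_1=[1,\tau_j,\omega_i]$ and $T_2=[1,\omega_i,\tau_k]$ are the two distinct triangles inside $e^2_1$; freeness of the action on faces only handles $gF=g'F$ for one and the same face $F$, and your orbit-distinctness claim concerns faces of \emph{distinct} base cells, so you must also check that $T_1$ and $T_2$ lie in distinct orbits (again immediate from the vertex method: $g\in\{1,\omega_i,\tau_k\}\cap\{1,\tau_j^{-1},\omega_i^{-1}\}=\{1\}$). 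With these two cases added, your argument is complete.
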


We now have to compute the boundaries of the cells and the resulting cellular homology chain complex. We choose to orient the 3-cell $e^3$ directly, and the 2-cells undirectly. The induced orientations seen in $\mathscr{D}$ can be visualized in Figure \ref{figure55}.

\begin{center}
\begin{figure}[h!]
\begin{tikzpicture}[scale=2.6,rotate=90]
  \coordinate (ti) at (-0.5,0.866);
  \coordinate (tk) at (1,0);
  \coordinate (ok) at (-1,0);
  \coordinate (oi) at (0.5,-0.866);
  \coordinate (oj) at (0.5,0.866);
  \coordinate (tj) at (-0.5,-0.866);
  \coordinate (u) at (-0.15,-0.15);
  \coordinate (o0) at (0.15,0.15);
  
  \draw[pattern=north west lines,pattern color=green,opacity=0.5] (tj)--(oi)--(tk)--(u);
  \draw[pattern=north west lines,pattern color=red,opacity=0.5] (tj)--(u)--(ti)--(ok);
  \draw[pattern=north west lines,pattern color=blue,opacity=0.5] (ti)--(u)--(tk)--(oj);
  
  \draw[decoration={markings, mark=at position 0.625 with {\arrow{<}}},postaction={decorate}][rotate=0] (0.25,-0.433) ellipse (0.2 and 0.12);
  \draw[decoration={markings, mark=at position 0.625 with {\arrow{<}}},postaction={decorate}][rotate=0] (-0.5,0) ellipse (0.12 and 0.2);
  \draw[decoration={markings, mark=at position 0.625 with {\arrow{<}}},postaction={decorate}][rotate=0] (0.25,0.433) ellipse (0.2 and 0.12);
  
  \draw (0.25,-0.433) node{$e^2_1$};
  \draw (-0.5,0) node{$e^2_3$};
  \draw (0.25,0.433) node{$e^2_2$};
  
  \draw[thick] (oj)--(tk);
  \draw[thick] (tk)--(oi);
  \draw[thick] (oi)--(tj);
  \draw[thick] (tj)--(ok);
  \draw[thick] (ok)--(ti);
  \draw[thick] (ti)--(oj);
  \draw[thick] (ti)--(o0);
  \draw[thick] (tk)--(o0);
  \draw[thick] (tj)--(o0);
  \draw[dashed,opacity=0.45] (u)--(ti);
  \draw[dashed,opacity=0.45] (u)--(tj);
  \draw[dashed,opacity=0.45] (u)--(tk);
  
  \draw (u) node[below]{$1$};
  \draw (oi) node[right]{$\omega_i$};
  \draw (oj) node[left]{$\omega_j$};
  \draw (ok) node[below]{$\omega_k$};
  \draw (ti) node[left]{$\tau_i$};
  \draw (tj) node[right]{$\tau_j$};
  \draw (tk) node[above]{$\tau_k$};
  \draw (o0) node[right]{$\omega_0$};
  
  \fill[fill=black] (ti) circle (0.7pt);
  \fill[fill=black] (tj) circle (0.7pt);
  \fill[fill=black] (tk) circle (0.7pt);
  \fill[fill=black] (oi) circle (0.7pt);
  \fill[fill=black] (oj) circle (0.7pt);
  \fill[fill=black] (ok) circle (0.7pt);
  \fill[fill=black,opacity=0.45] (u) circle (0.7pt);
  \fill[fill=black] (o0) circle (0.7pt);
\end{tikzpicture}~~~~\begin{tikzpicture}[scale=2.5,rotate=90,thick]
  \coordinate (ti) at (-0.5,0.866);
  \coordinate (tk) at (1,0);
  \coordinate (ok) at (-1,0);
  \coordinate (oi) at (0.5,-0.866);
  \coordinate (oj) at (0.5,0.866);
  \coordinate (tj) at (-0.5,-0.866);
  \coordinate (u) at (-0.15,-0.15);
  \coordinate (o0) at (0.15,0.15);
  
  \fill[fill=red,opacity=0.7] (tj)--(oi)--(tk)--(o0);
  \fill[fill=blue,opacity=0.7] (tj)--(o0)--(ti)--(ok);
  \fill[fill=green,opacity=0.7] (ti)--(o0)--(tk)--(oj);
  
  \draw[decoration={markings, mark=at position 0.5 with {\arrow{<}}},postaction={decorate}][rotate=0] (0.3125,-0.483) ellipse (0.16 and 0.21);
  \draw[decoration={markings, mark=at position 0.5 with {\arrow{<}}},postaction={decorate}][rotate=0] (-0.4375,0.05) ellipse (0.21 and 0.16);
  \draw[decoration={markings, mark=at position 0.5 with {\arrow{<}}},postaction={decorate}][rotate=0] (0.3125,0.483) ellipse (0.16 and 0.21);
  
  \draw (0.3125,-0.483) node{$\tau_k e^2_3$};
  \draw (-0.4375,0.05) node{$\tau_j e^2_2$};
  \draw (0.3125,0.483) node{$\tau_i e^2_1$};
  
  \draw (oj)--(tk);
  \draw (tk)--(oi);
  \draw (oi)--(tj);
  \draw (tj)--(ok);
  \draw (ok)--(ti);
  \draw (ti)--(oj);
  \draw (ti)--(o0);
  \draw (tk)--(o0);
  \draw (tj)--(o0);
  
  \draw (oi) node[right]{$\omega_i$};
  \draw (oj) node[left]{$\omega_j$};
  \draw (ok) node[below]{$\omega_k$};
  \draw (ti) node[left]{$\tau_i$};
  \draw (tj) node[right]{$\tau_j$};
  \draw (tk) node[above]{$\tau_k$};
  \draw (o0) node[right]{$\omega_0$};
  
  \fill[fill=black] (ti) circle (0.7pt);
  \fill[fill=black] (tj) circle (0.7pt);
  \fill[fill=black] (tk) circle (0.7pt);
  \fill[fill=black] (oi) circle (0.7pt);
  \fill[fill=black] (oj) circle (0.7pt);
  \fill[fill=black] (ok) circle (0.7pt);
  \fill[fill=black] (o0) circle (0.7pt);
\end{tikzpicture}
\caption{The fundamental domain with its 2-cells (back and front).}
\label{figure55}
\end{figure}
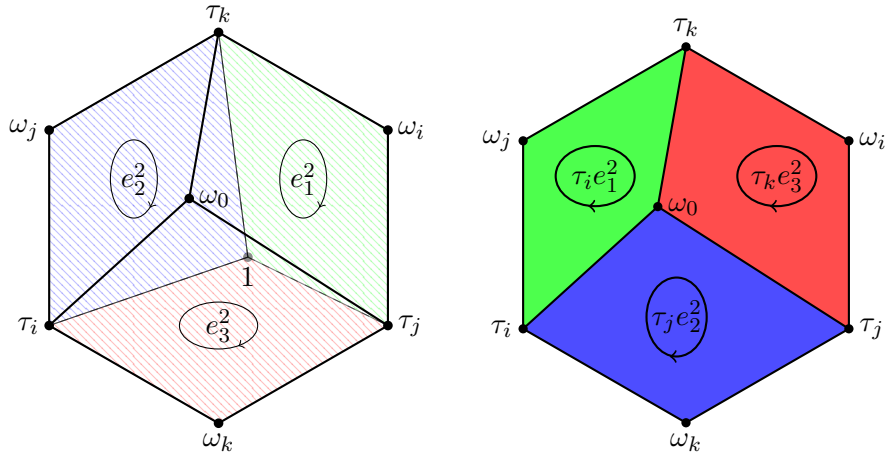
\end{center}
\indent These orientations allow us to easily compute the boundaries of the representing cells $e^u_v$ and give the resulting chain complex of free left $\Z[\BO]$-modules. 
\begin{prop}\label{Ochain}
The cellular homology complex of $\partial\Pol$ associated to the cellular structure given in Proposition \ref{DPOcells} is the chain complex of left $\Z[\BO]$-modules
\[\mathcal{K}_\BO := \left(\xymatrix{\Z[\BO] \ar^{\partial_3}[r] & \Z[\BO]^3 \ar^{\partial_2}[r] & \Z[\BO]^3 \ar^{\partial_1}[r] & \Z[\BO]}\right),\]
where
\[\partial_1=\begin{pmatrix} \tau_i-1 \\ \tau_j-1 \\ \tau_k-1\end{pmatrix},~~~~\partial_2=\begin{pmatrix} \omega_i & \tau_k-1 & 1 \\ 1 & \omega_j & \tau_i-1 \\ \tau_j-1 & 1 & \omega_k\end{pmatrix},~~~~\partial_3=\begin{pmatrix} 1-\tau_i & 1-\tau_j & 1-\tau_k \end{pmatrix}.\]
\end{prop}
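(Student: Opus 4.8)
The plan is to compute the three boundary maps by exploiting $\BO$-equivariance. Since each chain group is the free $\Z[\BO]$-module on the orbit representatives $E^j$ of Proposition \ref{DPOcells}, the map $\partial_j$ is determined by its values on the single representative of each orbit, and every face occurring in such a boundary is a \emph{unique} translate $g\cdot e$ of a representative cell. The coefficient of that summand is then just $g$, carrying a sign dictated by orientation. The only genuine input beyond this bookkeeping is quaternion multiplication inside $\BO$, which is what lets me recognise \emph{which} translate a given oriented face actually is.

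First I would dispose of $\partial_1$. Each $1$-cell is $e^1_\ell=\,]1,\tau_\ell[$, an edge oriented from $1$ to $\tau_\ell$, and the unique $0$-cell representative is $e^0=1$. Its oriented boundary is the terminal minus the initial vertex, so $\partial_1(e^1_\ell)=\tau_\ell\cdot e^0-1\cdot e^0=(\tau_\ell-1)e^0$, which is exactly the stated column. The map $\partial_3$ is nearly as immediate: the cell $e^3$ is the interior of the $3$-ball $\mathscr{D}$, whose boundary $2$-sphere is tiled by the six $2$-cells of $E^2_\mathscr{D}$. As Figure \ref{figure55} shows, these pair into opposite faces $e^2_1\leftrightarrow\tau_i e^2_1$, $e^2_2\leftrightarrow\tau_j e^2_2$, $e^2_3\leftrightarrow\tau_k e^2_3$, and opposite faces receive opposite induced orientations from the chosen direct orientation of $e^3$. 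Hence $\partial_3(e^3)=(1-\tau_i)e^2_1+(1-\tau_j)e^2_2+(1-\tau_k)e^2_3$, the asserted row.

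The bulk of the work, and the main obstacle, is $\partial_2$. Each representative $2$-cell $e^2_i$ is a quadrilateral built from two triangles glued along a diagonal (for instance $e^2_1=\,]\tau_j,1,\omega_i[\,\cup\,]1,\omega_i[\,\cup\,]1,\omega_i,\tau_k[$), so its topological boundary is a $4$-cycle of edges, whose internal diagonal cancels. I would list these four edges for each $e^2_i$ and identify each as $\pm\,g\cdot e^1_\ell$ using explicit products in $\BO$. For $e^2_1$, the four sides $[\tau_j,1]$, $[1,\tau_k]$, $[\tau_k,\omega_i]$, $[\omega_i,\tau_j]$ are recognised via $\tau_k\tau_j=\omega_i$ (so $[\tau_k,\omega_i]=\tau_k e^1_2$) and $\omega_i\tau_i=\tau_j$ (so $[\omega_i,\tau_j]=\omega_i e^1_1$), together with the tautological $[1,\tau_k]=e^1_3$ and $[\tau_j,1]=-e^1_2$; the analogous products $\tau_i\tau_k=\omega_j$, $\omega_j\tau_j=\tau_k$, and their cyclic cousins handle $e^2_2$ and $e^2_3$. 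The delicate point is not any single product but keeping all signs mutually consistent: because the $2$-cells carry the ``undirect'' orientation, the circulation sense of each boundary $4$-cycle must be fixed uniformly from Figure \ref{figure55}, not chosen cell-by-cell — otherwise one obtains the right entries up to an unwanted global sign on one column. Assembling the three resulting boundaries gives the matrix $\partial_2$.

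As a safeguard against sign errors in this last step, I would finally verify that the composite vanishes, i.e. $\partial_2\partial_1=0$ and $\partial_3\partial_2=0$ as products over $\Z[\BO]$; these reduce to the same handful of quaternion identities (e.g. $\tau_i\omega_i=\tau_k$, $\tau_j\omega_j=\tau_i$, $\omega_k\tau_k=\tau_i$) and their vanishing pins down the coefficients unambiguously, confirming the stated $\partial_1,\partial_2,\partial_3$.
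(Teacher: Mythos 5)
Your proposal is correct and follows exactly the route the paper takes (and largely leaves implicit): orient the cells as in Figure \ref{figure55}, read off each boundary as a signed sum of faces, and identify each face as a group translate of a representative cell via quaternion products such as $\tau_k\tau_j=\omega_i$, $\omega_i\tau_i=\tau_j$, $\tau_i\tau_k=\omega_j$ — all of which you state correctly. Your closing check that $\partial_2\partial_1=0$ and $\partial_3\partial_2=0$ is a sound safeguard (though it confirms rather than ``pins down'' the signs), and the only quibble is terminological: with the paper's row-vector convention for left modules, flipping the orientation of a $2$-cell changes a \emph{row} of $\partial_2$ (and the corresponding entry of $\partial_3$), not a column.
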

\indent To conclude this section, we show in Figure \ref{octarep} a tetrahedron in $\Pol_3$ containing $1$ as a vertex. In this picture, we put the points $\omega_h^\pm$ (with $h=0,i,j,k$) at the centers of the facets of the octahedron\footnote{The points in gray are on the background of the figure}. The tetrahedra in question are constructed in the following way: one chooses an edge of the octahedron and the center of a face which is adjacent to this edge. The resulting four vertices (including $1$) are vertices of the corresponding tetrahedron. 
\newline
\indent This representation will be useful when we study the application to the flag manifold of $SL_3(\R)$.

\begin{center}
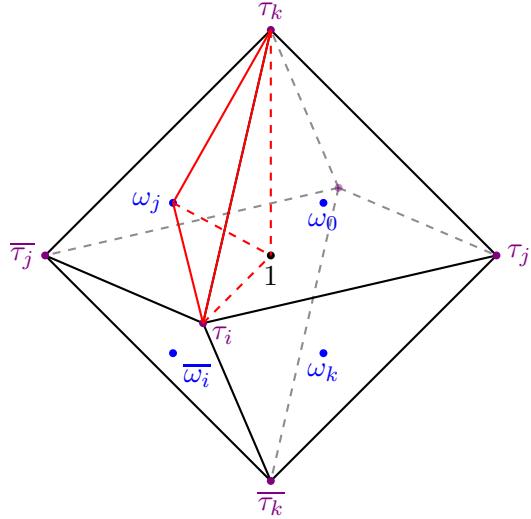
\begin{figure}[h!]
\begin{tikzpicture}[scale=3,thick]
  \coordinate (u) at (0,0,0);
  \coordinate (tip) at (0,0,0.78);
  \coordinate (tim) at (0,0,-0.78);
  \coordinate (tjp) at (1,0,0);
  \coordinate (tjm) at (-1,0,0);
  \coordinate (tkp) at (0,1,0);
  \coordinate (tkm) at (0,-1,0);
  \coordinate (o0p) at (1/3,1/3,0.26);
  \coordinate (o0m) at (-1/3,-1/3,-0.26);
  \coordinate (ojp) at (-1/3,1/3,0.26);
  \coordinate (ojm) at (1/3,-1/3,-0.26);
  \coordinate (oim) at (-1/3,-1/3,0.26);
  \coordinate (oip) at (1/3,1/3,-0.26);
  \coordinate (okp) at (1/3,-1/3,0.26);
  \coordinate (okm) at (-1/3,1/3,-0.26);
  
  \draw (tip)--(tjp);
  \draw (tip)--(tjm);
  \draw (tip)--(tkp);
  \draw (tip)--(tkm);
  \draw[opacity=0.45,dashed] (tim)--(tjp);
  \draw[opacity=0.45,dashed] (tim)--(tjm);
  \draw[opacity=0.45,dashed] (tim)--(tkp);
  \draw[opacity=0.45,dashed] (tim)--(tkm);
  \draw (tjm)--(tkm);
  \draw (tkm)--(tjp);
  \draw (tjp)--(tkp);
  \draw (tkp)--(tjm);
  
  \fill[fill=black] (u) circle (0.5pt);
  \fill[fill=black,color=violet] (tip) circle (0.5pt);
  \fill[fill=black,opacity=0.45,color=violet] (tim) circle (0.5pt);
  \fill[fill=black,color=violet] (tjp) circle (0.5pt);
  \fill[fill=black,color=violet] (tjm) circle (0.5pt);
  \fill[fill=black,color=violet] (tkp) circle (0.5pt);
  \fill[fill=black,color=violet] (tkm) circle (0.5pt);
  \fill[fill=black,color=blue] (o0p) circle (0.5pt);
  \fill[fill=black,color=blue] (oim) circle (0.5pt);
  \fill[fill=black,color=blue] (ojp) circle (0.5pt);
  \fill[fill=black,color=blue] (okp) circle (0.5pt);
  
  \draw (u) node[below]{$1$};
  \draw[color=violet] (-0.02,0.02,0.75) node[below right]{$\tau_i$};
  \draw[color=violet] (tjp) node[right]{$\tau_j$};
  \draw[color=violet] (tjm) node[left]{$\overline{\tau_j}$};
  \draw[color=violet] (tkp) node[above]{$\tau_k$};
  \draw[color=violet] (tkm) node[below]{$\overline{\tau_k}$};
  \draw[color=blue] (o0p) node[below]{$\omega_0$};
  \draw[color=blue] (oim) node[below right]{$\overline{\omega_i}$};
  \draw[color=blue] (ojp) node[left]{$\omega_j$};
  \draw[color=blue] (okp) node[below]{$\omega_k$};
  
  \draw[color=red] (tip)--(tkp);
  \draw[color=red] (tkp)--(ojp);
  \draw[color=red] (ojp)--(tip);
  \draw[dashed,color=red] (tkp)--(u);
  \draw[dashed,color=red] (ojp)--(u);
  \draw[dashed,color=red] (tip)--(u);
\end{tikzpicture}
\caption{One of the twenty-four facets of $\Pol$ containing $1$.}
\label{octarep}
\end{figure}
\end{center}

\subsection{The case of spheres and free resolution of the trivial $\BO$-module}\label{3.3}
\hfill\\

Using Theorem \ref{thm6.4}, we derive a fundamental domain for $\BO$ acting on $\Sph^3$ and thus obtain an $\BO$-equivariant cellular decomposition of $\Sph^3$.

\begin{theo}\label{SPHOcells}
The following subset of $\Sph^3$ is a fundamental domain for the action of $\BO$
\begin{align*}
\mathscr{F}_{3}:=&(\omega_i\ast 1 \ast \tau_j\ast\tau_k)\cup(1\ast\tau_j\ast\tau_k\ast\omega_0)\cup(\omega_j\ast1\ast\tau_k\ast\tau_i) \\
&\cup(1\ast\tau_k\ast\tau_i\ast\omega_0)\cup(\omega_k\ast1\ast\tau_i\ast\tau_j)\cup(1\ast\tau_i\ast\tau_j\ast\omega_0).
\end{align*}

As a consequence, the sphere $\Sph^3$ admits a $\BO$-equivariant cellular decomposition with the following cells as orbit representatives, where $\mathrm{relint}$ denotes the relative interior,
\[\widetilde{e}^0:=1\ast\emptyset=\{1\},~\widetilde{e}^1_1:=\mathrm{relint}(1\ast\tau_i),~\widetilde{e}^1_2:=\mathrm{relint}(1\ast\tau_j),~\widetilde{e}^1_3:=\mathrm{relint}(1\ast\tau_k),\]
\[\widetilde{e}^2_1:=\mathrm{relint}((1\ast\omega_i\ast\tau_j)\cup(1\ast\omega_i\ast\tau_k)),~\widetilde{e}^2_2:=\mathrm{relint}((1\ast\omega_j\ast\tau_k)\cup(1\ast\omega_j\ast\tau_i)),\]
\[\widetilde{e}^2_3:=\mathrm{relint}((1\ast\omega_k\ast\tau_i)\cup(1\ast\omega_k\ast\tau_j)),~\widetilde{e}^3:=\interior{\mathscr{F}}_{3}\]

Furthermore, the associated cellular homology complex is the chain complex $\mathcal{K}_\BO$ from the Proposition \ref{Ochain}.
\end{theo}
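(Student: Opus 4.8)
The plan is to observe that the entire statement is obtained by transporting, via the radial projection $\phi\colon\partial\Pol\to\Sph^3$, $x\mapsto x/|x|$, the structure already constructed on $\partial\Pol$ in Proposition \ref{Ofunddom}, Lemma \ref{ODcells}, Proposition \ref{DPOcells} and Proposition \ref{Ochain}. The key observation is that $\phi$ \emph{is} the curved join operation: for any face $F=[v_0,\dotsc,v_k]$ of $\Pol$, whose vertices lie in $\BO\subset\Sph^3$ and which satisfies $0\notin F$ by Theorem \ref{thm6.4}(iii), one has $\phi(F)=v_0\ast v_1\ast\cdots\ast v_k$ directly from the definition of the curved join as the radial projection of a convex hull.

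First I would identify $\mathscr{F}_3=\phi(\mathscr{D})$. Comparing the vertex sets of the six tetrahedra of Proposition \ref{Ofunddom} with the six curved joins defining $\mathscr{F}_3$ yields the matching $\Delta_1\leftrightarrow 1\ast\tau_i\ast\tau_j\ast\omega_0$, $\Delta_2\leftrightarrow 1\ast\tau_j\ast\tau_k\ast\omega_0$, $\Delta_3\leftrightarrow 1\ast\tau_k\ast\tau_i\ast\omega_0$, $\Delta_4\leftrightarrow\omega_k\ast1\ast\tau_i\ast\tau_j$, $\Delta_5\leftrightarrow\omega_i\ast1\ast\tau_j\ast\tau_k$ and $\Delta_6\leftrightarrow\omega_j\ast1\ast\tau_k\ast\tau_i$, each an equality of underlying vertex sets after reordering. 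Hence $\mathscr{F}_3=\phi\bigl(\bigcup_i\Delta_i\bigr)=\phi(\mathscr{D})$. Since $\phi$ is a $\BO$-equivariant homeomorphism (Theorem \ref{thm6.4}(iii)) and $\mathscr{D}$ is a fundamental domain for $\BO$ on $\partial\Pol$ (Proposition \ref{Ofunddom}), its image $\mathscr{F}_3$ is a fundamental domain for $\BO$ on $\Sph^3$.

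Next I would push the cellular decomposition of $\mathscr{D}$ from Lemma \ref{ODcells} forward through $\phi$. As $\phi$ sends the open face $]v_0,\dotsc,v_k[$ to $\mathrm{relint}(v_0\ast\cdots\ast v_k)$, each orbit representative transports to the stated cell: $e^1_l\mapsto\widetilde{e}^1_l$; the two-triangle cells $e^2_l$ (two triangles glued along the edge $[1,\omega_\bullet]$) map to the corresponding union of two curved triangles $\widetilde{e}^2_l$; and the top cell $e^3\mapsto\interior{\mathscr{F}}_{3}$. Combined with Proposition \ref{DPOcells}, this produces the asserted $\BO$-equivariant cellular decomposition of $\Sph^3$ with the listed orbit representatives.

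Finally, since $\phi$ is an equivariant homeomorphism carrying one cell structure onto the other while preserving dimensions and the chosen orientations, it induces an isomorphism of cellular chain complexes of left $\Z[\BO]$-modules. Therefore the cellular homology complex of $\Sph^3$ equals the one computed for $\partial\Pol$ in Proposition \ref{Ochain}, namely $\mathcal{K}_\BO$. No step presents a genuine difficulty: the argument is a transport of structure along $\phi$, and the only care required is the bookkeeping of matching the six tetrahedra to the six curved joins and verifying that relative interiors and orientations are respected by $\phi$, so that the boundary matrices $\partial_1,\partial_2,\partial_3$ are literally unchanged.
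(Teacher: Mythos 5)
Your proposal is correct and takes essentially the same route as the paper: Theorem \ref{SPHOcells} appears there with no separate proof precisely because it is the transport of Proposition \ref{Ofunddom}, Lemma \ref{ODcells} and Propositions \ref{DPOcells} and \ref{Ochain} along the equivariant radial homeomorphism $\phi$, under which closed faces become curved joins and open faces become (Riemannian) relative interiors, exactly as you argue. Your matching of the six tetrahedra $\Delta_i$ with the six curved joins, and the identification of the two-triangle $2$-cells and of the boundary matrices, agree with the paper's intended deduction.
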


The relative interior of a simplex in $\partial\Pol$ is sent to the Riemannian relative interior in $\Sph^3$, that is, the subset of points of the geodesic simplex which do not belong to any geodesic sub-simplex of smaller dimension.

From this, we immediately deduce the following result, which gives a periodic free resolution of the constant module over $\Z[\BO]$.

\begin{cor}\label{resolforO}
The following complex is a 4-periodic resolution of $\Z$ over $\Z[\BO]$
\[\xymatrix{\dotsc \ar[r] & \Z[\BO]^3 \ar^{\partial_{4q-3}}[r] & \Z[\BO] \ar^{\partial_{4q-4}}[r] & \dotsc\ar[r] & \Z[\BO]^3 \ar^{\partial_2}[r] & \Z[\BO]^3 \ar^<<<<<{\partial_1}[r] & \Z[\BO] \ar^<<<<<{\varepsilon}[r] & \Z \ar[r] & 0},\]
where, for $q\ge1$,
\[\partial_{4q-3}=\begin{pmatrix} \tau_i-1 \\ \tau_j-1 \\ \tau_k-1\end{pmatrix},~~~~\partial_{4q-2}=\begin{pmatrix} \omega_i & \tau_k-1 & 1 \\ 1 & \omega_j & \tau_i-1 \\ \tau_j-1 & 1 & \omega_k\end{pmatrix},\]
\[\partial_{4q-1}=\begin{pmatrix} 1-\tau_i & 1-\tau_j & 1-\tau_k \end{pmatrix},~~~~\partial_{4q}=\begin{pmatrix}\sum_{g\in\BO}g\end{pmatrix}.\]
\end{cor}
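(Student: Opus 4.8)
The plan is to recognize $\mathcal{K}_\BO$ as the cellular chain complex of the free $\BO$-CW complex $\Sph^3$ and then to splice together infinitely many shifted copies of it using the norm element; this is the standard construction of a periodic resolution from a free action on a sphere, and here all the geometric work has already been done in Theorem \ref{SPHOcells} and Proposition \ref{Ochain}.

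First I would record the homological input. By Theorem \ref{SPHOcells} and Proposition \ref{Ochain}, $\mathcal{K}_\BO$ is the cellular chain complex of $\Sph^3$ for a free cellular action of $\BO$, so its homology as a complex of $\Z[\BO]$-modules computes $H_\ast(\Sph^3;\Z)$: we get $H_0=\Z$, $H_1=H_2=0$ and $H_3=\Z$. Since $\BO$ acts by left quaternion multiplication, i.e.\ through $SO(4)$, the action is orientation-preserving, so $H_3(\Sph^3)\cong\Z$ and $H_0\cong\Z$ both carry the \emph{trivial} $\Z[\BO]$-module structure, the latter realized by the augmentation $\varepsilon$. Translating this into exactness, the augmented sequence
\[\Z[\BO]\xrightarrow{\partial_3}\Z[\BO]^3\xrightarrow{\partial_2}\Z[\BO]^3\xrightarrow{\partial_1}\Z[\BO]\xrightarrow{\varepsilon}\Z\to0\]
is exact at $\Z$, at the rightmost $\Z[\BO]$ (the statement $\ima\partial_1=\ker\varepsilon$, equivalent to $H_0=\Z$), and at both copies of $\Z[\BO]^3$ (equivalent to $H_1=H_2=0$). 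It remains to describe $\ker\partial_3=H_3\cong\Z$ explicitly. Because $\partial_3=(1-\tau_i,\,1-\tau_j,\,1-\tau_k)$ and $\tau_i,\tau_j,\tau_k$ generate $\BO$, an element $x\in\Z[\BO]$ lies in $\ker\partial_3$ iff $xg=x$ for all $g\in\BO$; the right-invariant elements of $\Z[\BO]$ are exactly the integer multiples of the norm $N:=\sum_{g\in\BO}g$, so $\ker\partial_3=\Z\cdot N$, the image of the fundamental class $\sum_{g}g\,e^3$.

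Finally I would splice. Define $\partial_4\colon\Z[\BO]\to\Z[\BO]$ by $y\mapsto yN$. Using $gN=N$ one computes $yN=\varepsilon(y)N$, whence $\ima\partial_4=\Z\cdot N=\ker\partial_3$ and $\ker\partial_4=\ker\varepsilon=\ima\partial_1$. Repeating the block $(\partial_1,\partial_2,\partial_3,\partial_4)$ with period $4$ then yields a complex in which the exactness at every node is one of the five identities already established: the three exactness statements of $\mathcal{K}_\BO$ at $C_0,C_1,C_2$, the equality $\ima\partial_4=\ker\partial_3$, and $\ima\partial_1=\ker\partial_4$. Matching indices ($\partial_{4q-3}=\partial_1$, $\partial_{4q-2}=\partial_2$, $\partial_{4q-1}=\partial_3$, $\partial_{4q}=\partial_4$) produces precisely the stated $4$-periodic free resolution of $\Z$ over $\Z[\BO]$.

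The only content beyond invoking the earlier results is the identification $\ker\partial_3=\Z\cdot N$ together with the two splicing computations $\ima\partial_4=\ker\partial_3$ and $\ker\partial_4=\ima\partial_1$; these are elementary once one knows $N$ is central and right-invariant, and they are the crux of the argument, with everything else being bookkeeping of indices.
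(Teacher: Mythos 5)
Your proof is correct and follows essentially the same route as the paper, which presents the corollary as an immediate consequence of Theorem \ref{SPHOcells} and Proposition \ref{Ochain}: exactness in the middle comes from $H_\ast(\Sph^3;\Z)$, and periodicity comes from splicing with the norm element $N=\sum_{g\in\BO}g$. Your explicit checks --- that $\ker\partial_3=\Z\cdot N$ because $\tau_i,\tau_j,\tau_k$ generate $\BO$, that $\ima\partial_4=\Z\cdot N$ and $\ker\partial_4=\ker\varepsilon=\ima\partial_1$ --- are exactly the bookkeeping the paper leaves implicit, and they agree with the geometric picture given later in Theorem \ref{SPHNOcells}, where $\partial_{4q}=\sum_{g\in\BO}g$ arises because the boundary of the $4q$-cell $\Sph^{4q-1}\ast\widetilde{e}^{4q-1}$ consists of the full $\BO$-orbit of the $(4q-1)$-cell.
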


Recall the augmentation map $\varepsilon : \Z[\BO] \to \Z$ defined by $\varepsilon\left(\sum_{g\in\BO}a_gg\right):=\sum_{g\in\BO}a_g$. We can now compute the group cohomology of $\BO$ using the previous Corollary. But first, let us recall the following basic fact:
\begin{lem}\label{quotient_complex}
\begin{enumerate}
\item If $G$ is a finite group acting freely and cellularily on a CW-complex $X$ and $\mathcal{K}$ is the cellular homology chain complex of $X$ (a complex of free $\Z[G]$-modules), then the induced cellular homology complex of $X/G$ is $\mathcal{K}\otimes_{\Z[G]}\Z$.
\item If $f : \Z[G]^m \to \Z[G]^n$ is a homomorphism of left $\Z[G]$-modules, identified with its matrix in the canonical bases, then the matrix of the induced homomorphism $f\otimes_{\Z[G]}id_\Z : \Z^m \to \Z^n$ is given by the matrix $\varepsilon(f)$, computed term by term.
\end{enumerate}
\end{lem}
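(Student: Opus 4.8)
The second statement is a direct computation, so I would dispose of it first and use it to clarify the content of the first. The key observation is the canonical isomorphism of abelian groups $\Z[G]\otimes_{\Z[G]}\Z\cong\Z$ under which $g\otimes 1\mapsto 1$ for every $g\in G$; equivalently, it is the map induced by the augmentation $\varepsilon$. For a single ring element $a=\sum_{g}a_g g\in\Z[G]$, viewed as the multiplication map $\Z[G]\to\Z[G]$, tensoring with $\Z$ over $\Z[G]$ yields multiplication by $\sum_g a_g=\varepsilon(a)$ on $\Z$. Writing $f$ as a matrix $(f_{ij})$ over $\Z[G]$ and applying $-\otimes_{\Z[G]}\Z$ entrywise then gives exactly the integer matrix $\varepsilon(f):=(\varepsilon(f_{ij}))$, which is the content of part (2).

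For part (1), I would invoke the standard theory of free $G$-CW-complexes. Since $G$ acts freely and cellularly on $X$, the set of $n$-cells is a free $G$-set, and choosing one representative in each orbit exhibits the cellular chain group $C_n(X)$ as a free $\Z[G]$-module with basis indexed by the $G$-orbits of $n$-cells. The quotient map $\pi:X\to X/G$ is a covering, and $X/G$ inherits a CW-structure whose $n$-cells are precisely these orbits; hence $C_n(X/G)$ is free abelian on the same index set. Combining this with the isomorphism $\Z[G]\otimes_{\Z[G]}\Z\cong\Z$ from the previous paragraph gives, for each $n$, a natural isomorphism $C_n(X)\otimes_{\Z[G]}\Z\stackrel{\sim}{\to}C_n(X/G)$ sending an orbit representative to the corresponding cell downstairs.

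It remains to check that these isomorphisms assemble into an isomorphism of chain complexes, i.e.\ that they commute with the boundary operators; this is where the real work lies. The cellular boundary of $X/G$ is computed by the degrees of the attaching maps of its cells, and because $\pi$ is a covering (in particular a local homeomorphism), each such degree equals the corresponding local degree upstairs. Concretely, the chain map $\pi_\ast:C_n(X)\to C_n(X/G)$ induced by $\pi$ is $G$-invariant and therefore factors through the coinvariants $C_n(X)_G=C_n(X)\otimes_{\Z[G]}\Z$; the resulting map is exactly the isomorphism above, and it is a chain map because $\pi_\ast$ is. Thus $C_\ast(X/G)\cong\mathcal{K}\otimes_{\Z[G]}\Z$ as chain complexes, as claimed.

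The main obstacle is precisely this last compatibility: one must verify that the coefficient $\varepsilon(\partial_{ij})$ appearing downstairs is genuinely the algebraic count of how often a representative $(n-1)$-cell of $X/G$ is traversed by the attaching map of an $n$-cell, summed over the fibre. This follows from the covering property, but it is the point that upgrades the naive matching of bases into an honest isomorphism of complexes, and hence the step I would write out most carefully.
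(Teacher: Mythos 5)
Your proposal is correct and follows the same route the paper takes: the paper simply declares part (1) obvious from the definition of the cellular structure on $X/G$ and part (2) a direct calculation, and your write-up supplies exactly those standard details (the identification $C_n(X)\otimes_{\Z[G]}\Z\cong C_n(X/G)$ via the covering projection factoring through coinvariants, and the entrywise augmentation computation). The only point worth noting is that your verification that $\pi_\ast$ is a chain map compatible with the basis identification is precisely the content the paper compresses into the word \enquote{obvious}, so there is no gap and no genuinely different method.
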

\begin{proof}
The first statement is obvious, by definition of the cellular structure on $X/G$ and the second one is a direct calculation.
\end{proof}
\begin{cor}\label{HstarO}
The group cohomology of $\BO$ with integer coefficients is given as follows:
\[\forall q\ge 1,~\left\{\begin{array}{cc}
H^q(\BO,\Z)=\Z & \text{if}~~q=0, \\[.5em]
H^q(\BO,\Z)=\Z/48\Z & \text{if}~~q\equiv 0\pmod 4, \\[.5em]
H^q(\BO,\Z)=\Z/2\Z & \text{if}~~q\equiv 2 \pmod 4, \\[.5em]
H^q(\BO,\Z)=0 & \text{otherwise}.\end{array}\right.\]
\end{cor}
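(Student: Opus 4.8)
The plan is to feed the $4$-periodic free resolution of Corollary \ref{resolforO} into the functor $\ho_{\Z[\BO]}(-,\Z)$, with $\Z$ the trivial module, since by definition $H^q(\BO,\Z)=\ext^q_{\Z[\BO]}(\Z,\Z)$ is the cohomology of the resulting cochain complex. Because $\ho_{\Z[\BO]}(\Z[\BO]^n,\Z)\cong\Z^n$, dualizing replaces each free $\Z[\BO]$-module by a free abelian group of the same rank; the induced coboundary maps are obtained, by the contravariant analogue of Lemma \ref{quotient_complex}(2), by transposing each matrix $\partial_q$ and applying the augmentation $\varepsilon$ entrywise.

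First I would apply $\varepsilon$ to the four differentials. Since $\varepsilon(\tau_i)=\varepsilon(\tau_j)=\varepsilon(\tau_k)=\varepsilon(\omega_i)=\varepsilon(\omega_j)=\varepsilon(\omega_k)=\varepsilon(1)=1$, every entry of the form $\tau-1$ or $1-\tau$ augments to $0$; hence $\varepsilon(\partial_{4q-3})=0$ and $\varepsilon(\partial_{4q-1})=0$. The norm differential gives $\varepsilon(\partial_{4q})=\varepsilon\!\left(\sum_{g\in\BO}g\right)=|\BO|=48$, while the middle matrix becomes
\[A:=\varepsilon(\partial_{4q-2})=\begin{pmatrix} 1 & 0 & 1 \\ 1 & 1 & 0 \\ 0 & 1 & 1\end{pmatrix},\qquad \det A=2.\]
Up to the (harmless) transposition, the dualized complex is therefore $4$-periodic of the shape
\[\Z\xrightarrow{\,0\,}\Z^3\xrightarrow{\,A\,}\Z^3\xrightarrow{\,0\,}\Z\xrightarrow{\,48\,}\Z\xrightarrow{\,0\,}\cdots\]

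It then remains to take cohomology degree by degree. In degree $0$ we get $\ker(0\colon\Z\to\Z^3)=\Z$. For $q\equiv1\pmod4$ the relevant map is $A$, which is injective as $\det A=2\neq0$, so $H^q=0$. For $q\equiv2\pmod4$ we obtain $\coker A=\Z^3/A\Z^3$, and here the one nontrivial input is the Smith normal form of $A$: the entries have $\gcd 1$, the top-left $2\times2$ minor equals $1$, and $\det A=2$, so the invariant factors are $1,1,2$ and $\coker A\cong\Z/2\Z$. For $q\equiv3\pmod4$ the map $48\colon\Z\to\Z$ is injective, so $H^q=0$; and for $q\equiv0\pmod4$ with $q\ge4$ we get $\coker(48)=\Z/48\Z$. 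Assembling these with the $4$-periodicity of the differentials yields exactly the stated table.

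The computation is essentially routine once it is set up; the only substantive ingredients are the Smith normal form of $A$, which is the source of the $2$-torsion, and the recognition of $48=|\BO|$ in the norm differential, which is the source of the $\Z/48\Z$. The hard part is really bookkeeping: keeping track of the transpose coming from the contravariance of $\ho_{\Z[\BO]}(-,\Z)$. But since both $\varepsilon(\partial_{4q-3})$ and $\varepsilon(\partial_{4q-1})$ vanish after augmentation, and transposition preserves the elementary divisors of $A$ as well as the scalar $48$, this subtlety has no bearing on the final isomorphism types.
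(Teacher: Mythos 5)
Your proposal is correct and follows essentially the same route as the paper: both feed the $4$-periodic resolution of Corollary \ref{resolforO} through augmentation and dualization (the paper phrases this via Lemma \ref{quotient_complex} and the cellular complex of $\mathsf{P}^{\infty}_{\BO}$, you phrase it directly as $\ext^q_{\Z[\BO]}(\Z,\Z)$, but the resulting cochain complex $\Z\xrightarrow{0}\Z^3\xrightarrow{A}\Z^3\xrightarrow{0}\Z\xrightarrow{48}\Z\to\cdots$ is identical). Your degree-by-degree computation, including the Smith normal form argument giving invariant factors $1,1,2$, just makes explicit the step the paper compresses into ``computing the elementary divisors of the only non-trivial matrix concludes.''
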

\begin{proof}
In view of Lemma \ref{quotient_complex}, is suffices to compute $\mathcal{C}(\mathsf{P}^{\infty}_{\BO},\Z[\BO])\otimes_{\Z[\BO]}\Z$, with $\mathcal{C}(\mathsf{P}_{\BO}^\infty,\Z[\BO])$ the complex given in Corollary \ref{resolforO}. The notation will become clear later (see Theorem \ref{SPHNOcells}). Computing the matrices $\varepsilon(\partial_i)$ and dualizing the result leads to the following cochain complex
\[\xymatrix{0\ar[r] & \Z \ar^0[r] & \Z^3 \ar^{\left(\begin{smallmatrix}1 & 1 & 0 \\ 0 & 1 & 1 \\ 1 & 0 & 1\end{smallmatrix}\right)}[r] & \Z^3 \ar^0[r] & \Z \ar^{\times48}[r] & \Z \ar[r] & \cdots\ar[r] & \Z\ar^{\times48}[r] & \Z \ar^0[r] & \Z^3 \ar[r] & \cdots}\]
and computing the elementary divisors of the only non-trivial matrix concludes.
\end{proof}
\begin{rem}\label{TZO}
In \cite[Proposition 4.7]{tomoda-zvengrowski_2008}, Tomoda and Zvengrowski give an explicit resolution of $\Z$ over $\Z[\BO]$. They use the following presentation 
\[\BO=\left<T,U~|~TU^2T=U^2,~TUT=UTU\right>\]
from \cite{coxeter-moser}. As we would like to work with presentations, we use the isomorphism
\[\left<T,U~|~TU^2T=U^2,~TUT=UTU\right>  \stackrel{\tiny{\sim}}\longrightarrow \BO\]
sending $T$ to $\tfrac{1}{\sqrt{2}}(1+i)$ and $U$ to $\tfrac{1}{\sqrt{2}}(1+j)$.
Then, the Tomoda-Zvengrowski complex reads
\[\mathcal{K}_\BO^{\text{TZ}}=\left(\xymatrix{\Z[\BO] \ar^{\delta_3}[r] & \Z[\BO]^2 \ar^{\delta_2}[r] & \Z[\BO]^2 \ar^{\delta_1}[r] & \Z[\BO]}\right),\]
with
\[\delta_1=\begin{pmatrix}T-1 \\ U-1 \end{pmatrix},~~\delta_2=\begin{pmatrix}1+TU-U & T-1-UT \\ 1+TU^2 & T-U-1+TU\end{pmatrix},~~\delta_3=\begin{pmatrix}1-TU & U-1\end{pmatrix}.\]
On the other hand, the differentials $\partial_i$ of the complex $\mathcal{K}_\BO$ from Proposition \ref{Ochain} are given, through the above presentation, by
\[\partial_1=\begin{pmatrix}T-1 \\ U-1 \\ TUT^{-1}-1\end{pmatrix},~\partial_2=\begin{pmatrix}UT^{-1} & TUT^{-1}-1 & 1 \\ 1 & U^{-1}T & T-1 \\ U-1 & 1 & UT\end{pmatrix},~\partial_3=\begin{pmatrix}1-T & 1-U & 1-TUT^{-1}\end{pmatrix}.\]
We claim that the complexes $\mathcal{K}_\BO$ and $\mathcal{K}_\BO^{\text{TZ}}$ are homotopy equivalent. This observation relies on elementary operations on matrix rows and columns. Write $Z:=U^4=T^4$ for the only non trivial element of $Z(\BO)$. For short, define
\[P:=\begin{pmatrix}-Z & 0 & 0 \\ Z(1-T) & TUT & -U^2 \\ -U^{-3}T & -TUT & 0\end{pmatrix},~~Q:=\begin{pmatrix}0 & -TUT & 0 \\ -TUT & 0 & 0 \\ U^2-TUT & U^2T & 1 \end{pmatrix},\]
then $P,Q\in GL_3(\Z[\BO])$ and 
\[P^{-1}=\begin{pmatrix}-Z & 0 & 0 \\ U^{-1} & 0 & -(TUT)^{-1} \\ U^{-2}(T-1)+U^{-1}T & -U^{-2} & -U^{-2}\end{pmatrix},~~Q^{-1}=\begin{pmatrix}0 & -(TUT)^{-1} & 0 \\ -(TUT)^{-1} & 0 & 0 \\ UT^{-1} & TUT^{-1}-1 & 1\end{pmatrix}.\]
Now, we have the following relations
\[-Q^{-1}d_1TUT=\begin{pmatrix}T-1 \\ U-1 \\ 0\end{pmatrix},~~P^{-1}d_2Q=\begin{pmatrix}0 & 0 & -Z \\ 1+TU-U & T-1-UT & 0 \\ 1+TU^2 & T-U-1+TU & 0\end{pmatrix},\]
\[U^{-2}d_3P=\begin{pmatrix}0 & 1-TU & U-1\end{pmatrix}.\]
Hence, the isomorphism
\[\mathcal{K}_\BO\simeq\mathcal{K}_\BO^{\text{TZ}}\oplus\left(\xymatrix{0 \ar[r] & \Z[\BO] \ar^1[r] & \Z[\BO] \ar[r] & 0}\right),\]
confirms that $\mathcal{K}_\BO$ is indeed homotopy equivalent to $\mathcal{K}_\BO^{\text{TZ}}$.
\end{rem}

\indent In fact, the complex from the Corollary \ref{resolforO} carries geometric information. 

\begin{prop}\label{OfunddomN}
The following subset of $\Sph^{4n-1}$ is a fundamental domain for the action
\[\mathscr{F}_{4n-1}:=\Sigma_1\ast\Sigma_2\ast\cdots\ast\Sigma_{2(n-1)}\ast\mathscr{F}_{3},\]
with $\mathscr{F}_{3}$ inside $\Sigma_{2n-1}\ast\Sigma_{2n}$ the fundamental domain from Theorem \ref{SPHOcells}.
\end{prop}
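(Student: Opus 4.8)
The plan is to read this off directly from the curved-join machinery already assembled, namely Lemma \ref{lem_join}, taking as input the base case established in Theorem \ref{SPHOcells}. Since $\BO$ is realized as a finite subgroup of the unit quaternions, it sits inside $O(4)$ and acts freely on $\Sph^3$; by Theorem \ref{SPHOcells} the subset $\mathscr{F}_3$ is a fundamental domain for this action, and it carries the explicit cellular decomposition described there. This is precisely the data required to feed into Lemma \ref{lem_join}, so the proposition should require no genuinely new argument beyond unwinding the notation.

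First I would apply Lemma \ref{lem_join} with ambient dimension $4$ (so that the lemma's sphere is $\Sph^3$ and its fundamental domain is $\mathscr{F}_3$) and with the lemma's parameter $h$ set equal to $n$, since $hn-1 = 4n-1$ forces $h=n$. The lemma then asserts that, for the diagonal action of $\BO$ on $\R^{4n} = (\R^4)^n$, the subset
\[\mathcal{D}' := \Sph^{4(n-1)-1}\ast\mathscr{F}_3\]
is a fundamental domain for $\BO$ acting on $\Sph^{4n-1}$. Here the equivariance of the join, $g\cdot(x\ast y) = gx\ast gy$ recorded just before Lemma \ref{bound_join}, is what guarantees that the diagonal action is compatible with the decomposition $\Sph^{4n-1} = \Sph^{4(n-1)-1}\ast\Sph^3$, in which the first factor is the unit sphere of the first $n-1$ quaternionic coordinates and the second factor is the unit sphere of the last quaternionic coordinate.

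It then remains only to rewrite the sphere factor $\Sph^{4(n-1)-1}$ in terms of the circle joins $\Sigma_k$. Using the identification $\R^{4(n-1)} \cong \C^{2(n-1)}$ together with the standard decomposition $\Sph^{2m-1} = \Sigma_1\ast\cdots\ast\Sigma_m$, taken with $m = 2(n-1)$ so that $2m-1 = 4(n-1)-1$, one gets $\Sph^{4(n-1)-1} = \Sigma_1\ast\cdots\ast\Sigma_{2(n-1)}$. Substituting this into $\mathcal{D}'$ and invoking associativity of the curved join yields $\mathcal{D}' = \Sigma_1\ast\cdots\ast\Sigma_{2(n-1)}\ast\mathscr{F}_3 = \mathscr{F}_{4n-1}$, which is the claimed fundamental domain; this is consistent with placing $\mathscr{F}_3$ in $\Sigma_{2n-1}\ast\Sigma_{2n}$, since the last copy of $\C^2$ carries $\Sph^3 = \Sigma_{2n-1}\ast\Sigma_{2n}$.

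The only point needing care — and the place where I expect the main (and fairly modest) obstacle to lie — is the bookkeeping of two competing identifications of $\R^4$: the quaternionic one, under which $\BO$ acts by left multiplication, and the complex one $\R^4\cong\C^2$, under which the circle factors $\Sigma_{2n-1},\Sigma_{2n}$ are defined. One must check that these match up so that the recursive join construction of Lemma \ref{lem_join} genuinely produces the stated ordering $\Sigma_1\ast\cdots\ast\Sigma_{2(n-1)}\ast\mathscr{F}_3$ rather than a reindexed variant. Since this is exactly the setup of the curved-join paragraph preceding Lemma \ref{bound_join}, verifying it amounts to a careful matching of coordinate conventions rather than any new idea.
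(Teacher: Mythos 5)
Your proposal is correct and matches the paper's (implicit) argument exactly: the paper states Proposition \ref{OfunddomN} without a separate proof, precisely because it follows from Lemma \ref{lem_join} applied with the lemma's ambient dimension equal to $4$ and $h=n$, taking as input the fundamental domain $\mathscr{F}_3$ of Theorem \ref{SPHOcells}, together with the standard identification $\Sph^{4(n-1)-1}=\Sigma_1\ast\cdots\ast\Sigma_{2(n-1)}$. Your final remark on matching the quaternionic and complex coordinates is the right (and only) point of bookkeeping, and it is resolved exactly as you say, since the proposition itself places $\mathscr{F}_3$ inside $\Sigma_{2n-1}\ast\Sigma_{2n}$.
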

\indent We can now describe the resulting equivariant cellular decomposition on $\Sph^{4n-1}$ using Lemma \ref{lem_join} and Theorem \ref{SPHOcells}. It only remains to consider the boundary of the cells $\widetilde{e}^{4q}$ for $q>0$. But it follows from the fact that $\widetilde{e}^{4q}=\Sph^{4q-1}\ast\widetilde{e}^{4q-1}$, hence its boundary is given by all the cells in $\Sph^{4q-1}$, that is, all the orbits under $\BO$. This gives the following result, which we prefer to state using the vocabulary of universal covering spaces. We denote by $C(\widetilde{K},\Z[G])$ the chain complex of finitely generated free (left) $\Z[G]$-modules given by the cellular homology complex of the universal covering space $\widetilde{K}$ of a finite CW-complex $K$ with the fundamental group $G$ acting by covering transformations.
\begin{theo}\label{SPHNOcells}
The chain complex $\mathcal{C}(\mathsf{P}_{\BO}^{4n-1},\Z[\BO])$ of the universal covering space of the octahedral space forms $\mathsf{P}_{\BO}^{4n-1}$ with the fundamental group acting by covering transformations is the following complex of left $\Z[\BO]$-modules:
\[\xymatrix{0 \ar[r] & \Z[\BO] \ar^{\partial_{4n-1}}[r] & \Z[\BO]^3 \ar[r] & \dotsc\ar[r] & \Z[\BO]^3 \ar^{\partial_2}[r] & \Z[\BO]^3 \ar^{\partial_1}[r] & \Z[\BO] \ar[r] & 0},\]
where, the boundaries are as in Corollary \ref{resolforO}.

In particular, the complex is exact in middle terms, i.e.
\[\forall 0<i<4n-1,~H_i(\mathcal{C}(\mathsf{P}_{\BO}^{4n-1},\Z[\BO]))=0\]
and we have
\[H_0(\mathcal{C}(\mathsf{P}_{\BO}^{4n-1},\Z[\BO]))=H_{4n-1}(\mathcal{C}(\mathsf{P}_{\BO}^{4n-1},\Z[\BO]))=\Z.\]
\end{theo}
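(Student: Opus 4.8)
The plan is to realize the stated complex as the cellular chain complex of $\Sph^{4n-1}$ equipped with the equivariant structure coming from the curved join, and then to read off the homology from that of the sphere. First I would observe that since $4n-1\ge 3$, the sphere $\Sph^{4n-1}$ is simply connected, so the free action of $\BO$ exhibits $\Sph^{4n-1}$ as the universal cover of $\mathsf{P}_{\BO}^{4n-1}=\Sph^{4n-1}/\BO$, with $\BO$ acting by deck transformations. Consequently $\mathcal{C}(\mathsf{P}_{\BO}^{4n-1},\Z[\BO])$ is, by definition, the cellular chain complex of $\Sph^{4n-1}$ for the $\BO$-equivariant CW structure produced by Lemma \ref{lem_join} out of the $\Sph^3$ decomposition of Theorem \ref{SPHOcells}, with fundamental domain $\mathscr{F}_{4n-1}=\Sph^{4n-5}\ast\mathscr{F}_3$ as in Proposition \ref{OfunddomN}. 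The recursive attachment of curved-join cells yields one orbit of cells in each dimension according to the $4$-periodic pattern $1,3,3,1$, giving exactly the module ranks $1,3,3,1,\dots,1,3,3,1$ in degrees $0,1,\dots,4n-1$.

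Next I would pin down the differentials. In degrees $1,2,3$ they are $\partial_1,\partial_2,\partial_3$ of Proposition \ref{Ochain}, by Theorem \ref{SPHOcells}. For a higher block, the cells of degrees $4q+1,4q+2,4q+3$ are curved joins of the fixed sphere $\Sph^{4q-1}$ with the $1$-, $2$- and $3$-cells of the $\Sph^3$ structure; applying the Leibniz rule of Lemma \ref{bound_join} and using that $\Sph^{4q-1}$ is a cycle shows these boundaries reproduce $\partial_1,\partial_2,\partial_3$ verbatim, establishing $4$-periodicity. The only new map is $\partial_{4q}$ for $q\ge 1$: here the degree-$4q$ cell $\widetilde{e}^{4q}$ is the cone $\Sph^{4q-1}\ast\{1\}$, so its boundary is the fundamental cycle of $\Sph^{4q-1}$. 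As there is a single $\BO$-orbit of top cells in $\Sph^{4q-1}$, this fundamental cycle equals $\sum_{g\in\BO}\varepsilon_g\, g\,\widetilde{e}^{4q-1}$ for signs $\varepsilon_g$; because $\BO$ acts on $\R^{4n}$ through $SO(4n)$ (left quaternion multiplication is orientation preserving), every translate carries the same sign, so all $\varepsilon_g=+1$ and $\partial_{4q}$ is multiplication by the norm element $\sum_{g\in\BO}g$, as claimed in Corollary \ref{resolforO}.

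Finally, the homology statement is immediate once the complex is identified: $\mathcal{C}(\mathsf{P}_{\BO}^{4n-1},\Z[\BO])$ is the cellular chain complex of the space $\Sph^{4n-1}$, so its homology is $H_\ast(\Sph^{4n-1};\Z)$. This vanishes in degrees $0<i<4n-1$, giving exactness in the middle, and equals $\Z$ in degrees $0$ and $4n-1$; the $\BO$-action is trivial on $H_0$ and, again by orientation preservation, on $H_{4n-1}$, so both are the trivial module $\Z$.

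I expect the main obstacle to be the orientation and sign bookkeeping in the second paragraph: verifying via the Leibniz formula that the join really does reproduce $\partial_1,\partial_2,\partial_3$ periodically, and---most delicately---confirming that every group translate appears with coefficient $+1$ in the fundamental cycle of $\Sph^{4q-1}$, so that $\partial_{4q}$ is the positive norm element rather than an alternating sum. This hinges on consistently orienting the curved-join cells and exploiting that the diagonal left action lies in $SO(4n)$.
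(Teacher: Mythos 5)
Your proposal is correct and takes essentially the same route as the paper: the paper's own (very terse) proof likewise combines Lemma \ref{lem_join} with the preceding discussion — namely that the cone cell $\widetilde{e}^{4q}=\Sph^{4q-1}\ast\widetilde{e}^{0}$ has boundary the full $\BO$-orbit of the top cell of $\Sph^{4q-1}$, giving the norm element $\sum_{g\in\BO}g$ — and deduces the homology claims from $\Sph^{4n-1}$ being the universal covering space of $\mathsf{P}_{\BO}^{4n-1}$. Your extra bookkeeping (the Leibniz formula of Lemma \ref{bound_join} for the periodicity of $\partial_1,\partial_2,\partial_3$, and orientation-preservation to fix all signs in $\partial_{4q}$ at $+1$) simply makes explicit what the paper leaves implicit.
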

\begin{proof}
The computation of the complex follows from lemma \ref{lem_join} and the previous discussion. The claims on its homology follow, $\Sph^{4n-1}$ being the universal covering space of $\mathsf{P}^{4n-1}_{\BO}$.
\end{proof}

\subsection{Application to the flag manifold of $SL_3(\R)$}
\hfill\\

The $\BO$-equivariant cellular structure of $\Sph^3$ may be used to obtain a cellular decomposition of the real points of the flag manifold $SU_3(\C)/T$ of type $A_2$. The elementary facts concerning Lie groups we use here can be found in \cite{bump} or \cite{fulton_harris}.
\newline
\indent Given a \emph{maximal torus} $T$ in a simply connected compact semisimple Lie group $G$, one can consider the \emph{Weyl group} $W:=N_G(T)/T$. It is a finite Coxeter group (\cite[Proposition 15.8 and Theorem 25.1]{bump}), which acts by right multiplication on the \emph{flag manifold} $G/T$. For instance, in type $A_{n-1}$, we have $G=SU_n(\C)$ and we can take $T$ to be the group of diagonal matrices in $SU_n(\C)$. In this case, one has $W\simeq\Sym_n$. This group has Coxeter presentation
\[W=\Sym_n=\left<s_1,\dotsc,s_{n-1}~|~s_i^2=1,~s_is_{i+1}s_i=s_{i+1}s_is_{i+1},~s_is_j=s_js_i,~\forall |i-j|>1\right>\]
and a representative $\dot{s_i}$ for the \emph{reflection} $s_i$ in $N_{SU_n(\C)}(T)$ can be taken as a block matrix (with $(i-1)$ ones before the matrix $s$):
\[\dot{s_i}:=\mathrm{diag}(1,\dotsc,1,s,1\dotsc,1),~~\text{with}~~s:=\begin{pmatrix}0 & -1\\1 & 0\end{pmatrix}.\]
If $w=s_{i_1}s_{i_2}\cdots s_{i_k}$ is a \emph{reduced word} in $W$, then the element $\dot{w}:=\dot{s_{i_1}}\dot{s_{i_2}}\cdots\dot{s_{i_k}}\in N_G(T)$ does not depend on the chosen word for $w$ and for $g\in G$, the action of $w$ on $g$ is given by multiplication $g\cdot w:=g\dot{w}$.
\newline
\indent On the other hand, the \emph{Iwasawa decomposition} (see \cite[Theorem 26.4]{bump}) gives a diffeomorphism $G/T\simeq G^\C/B$, with $G^\C$ the \emph{universal complexification} of $G$ and $B$ a \emph{Borel subgroup} of $G^\C$ containing $T$. This provides $G/T$ with a structure of complex algebraic variety. Hence, one may talk about \emph{real points} of $G/T$. We use the standard notation $X(\R)$ to denote the set of real points of an algebraic variety $X$.
\begin{rem}\label{flags}
In type $A_{n-1}$, that is if $G=SU_n(\C)$ and if $T$ is the group diagonal matrices in $SU_n(\C)$, then one may take $G^{\C}=SL_n(\C)$ and $B$ the Borel subgroup of upper-triangular matrices in $SL_n(\C)$. We denote by $\mathcal{F}_n$ the set of \emph{flags} in $\C^n$, that is
\[\mathcal{F}_n:=\{V_\bullet:=(V_1,\dotsc,V_{n-1})~;~V_i\le\C^n,~V_i\subset V_{i+1},~\dim V_i=i\}.\]
The group $G^\C$ acts naturally on $\mathcal{F}_n$ and if $V_0$ is the canonical flag of $\C^n$, then the bijection
\[\begin{array}{ccc}
G^\C/B & \to & \mathcal{F}_n \\ gB & \mapsto & g\cdot V_0\end{array}\]
endows $\mathcal{F}_n$ with the structure of a complex algebraic variety. Furthermore, it is easy to see that the real points $\mathcal{F}_n(\R)$ of $\mathcal{F}_n$ is the set of \emph{real flags} in $\R^n$ and we have
\[\mathcal{F}_n(\R)\simeq SO_n(\R)/T(\R)\]
and $T(\R)$ is isomorphic to $(\Z/2\Z)^{n-1}$.
\end{rem}
\indent The case $G=SU_2(\C)$ (i.e. in type $A_1$) is fairly trivial, since $SU_2(\C)/T\simeq\Sph^2$ and $W=\Sym_2=\{1,s\}$ acts as the antipode on $\Sph^2$, so the quotient $(SU_2(\C)/T)/\Sym_2$ is the projective plane $\Pro^2(\R)$ and its simplest cellular structure lifts to a $W$-equivariant one on $\Sph^2$, see Figure \ref{A1}.
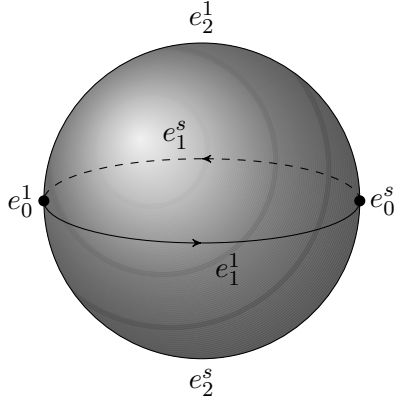
\begin{figure}[h!]
\begin{tikzpicture}[scale=0.7]
  \shade[ball color = black!40, opacity = 0.5] (0,0) circle (3cm);
  \draw (0,0) circle (3cm);
  \draw[->,>=stealth'] (-3,0) arc (180:270:3 and 0.8);
  \draw (0,-0.8) arc (270:360:3 and 0.8);
  \draw[->,>=stealth'][dashed] (3,0) arc (0:90:3 and 0.8);
  \draw[dashed] (0,0.8) arc (90:180:3 and 0.8);
  \fill[fill=black] (-3,0) circle (3pt);
  \draw (-3,0) node[left]{$e_0^1$};
  \fill[fill=black] (3,0)  circle (3pt);
  \draw (3,0) node[right]{$e_ 0^s$};
  \draw (0.5,-0.8) node[below]{$e_1^1$};
  \draw (-0.5,0.8) node[above]{$e_1^s$};
  \draw (0,3) node[above]{$e_2^1$};
  \draw (0,-3) node[below]{$e_2^s$};
\end{tikzpicture}
\caption{Equivariant cellular decomposition of $SU_2(\C)/T=\Sph^2$.}
\label{A1}
\end{figure}
\newline
\indent In this section, we treat the case of the real points of $SU_3(\C)/T$, using the octahedral spherical space form.
\newline
\indent First of all, we have to identify spaces and actions. We begin with a trivial lemma.
\begin{lem}\label{quotient_action}
Let $P$ be a finite group acting freely by diffeomorphisms on a manifold $X$ and $Q\unlhd P$ be a normal subgroup of $P$. Then, $P/Q$ acts freely on the quotient manifold $X/Q$ and the projection $X\twoheadrightarrow X/P$ induces a natural diffeomorphism
\[\bigslant{(X/Q)}{(P/Q)} \stackrel{\tiny{\sim}}\longrightarrow X/P.\]
\end{lem}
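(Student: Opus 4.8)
The plan is to proceed in three steps: first recall that each quotient appearing in the statement is a smooth manifold, then define the $P/Q$-action on $X/Q$ and check that it is free, and finally produce the diffeomorphism using the universal property of quotient maps. Since $P$, and hence its subgroup $Q$, is finite and acts freely and smoothly on $X$, the standard quotient construction endows $X/Q$ and $X/P$ with smooth manifold structures for which the orbit maps $\pi_Q \colon X \to X/Q$ and $\pi_P \colon X \to X/P$ are smooth covering maps (a free smooth action of a finite group is automatically properly discontinuous). I will write $Qx$ and $Px$ for the respective orbits of a point $x$.

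Next I would define the action of $P/Q$ on $X/Q$ by $(pQ)\cdot(Qx) := Q(px)$. For well-definedness, suppose $p' \in pQ$ and $Qx' = Qx$; writing $p' = pq_1$ and $x' = q_2 x$ with $q_1,q_2 \in Q$, one computes $p'x' = p q_1 q_2 x = (p q_1 q_2 p^{-1})(px)$, and here the key point is that normality of $Q$ gives $p q_1 q_2 p^{-1} \in Q$, so that $Q(p'x') = Q(px)$ as required. Freeness is then immediate: if $Q(px) = Qx$, then $px = qx$ for some $q \in Q$, whence $q^{-1}p$ fixes $x$; as $P$ acts freely this forces $p = q \in Q$, i.e. $pQ$ is trivial in $P/Q$. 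The action is smooth since it is covered by the smooth $P$-action on $X$.

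Finally, to obtain the diffeomorphism, I would consider the composite $X \xrightarrow{\pi_Q} X/Q \to (X/Q)/(P/Q)$. It is smooth and constant on $P$-orbits (two points in the same $P$-orbit have $Q$-orbits lying in the same $(P/Q)$-orbit), so by the universal property of the covering $\pi_P$ it descends to a smooth map $\Phi \colon X/P \to (X/Q)/(P/Q)$. Running the same argument with the roles reversed — the map $X \to X/P$ is constant on $Q$-orbits, and its descent to $X/Q$ is then constant on $(P/Q)$-orbits — yields a smooth candidate inverse $\Psi$. Since $\Phi$ and $\Psi$ are mutually inverse bijections that are both smooth, $\Phi$ is the asserted diffeomorphism, and it is natural precisely in that it is the map induced by $\pi_P$.

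The only step requiring genuine care is the well-definedness of the $P/Q$-action, where normality of $Q$ is indispensable; everything else is a formal consequence of the universal property of quotients by free finite group actions, which is why the statement is qualified as trivial.
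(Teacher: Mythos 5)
Your proof is correct: the well-definedness of the $P/Q$-action via normality, freeness inherited from the freeness of $P$, and the construction of mutually inverse smooth maps by the universal property of the covering projections are all sound, and you correctly identify proper discontinuity of free finite group actions as the reason the quotients are smooth manifolds with covering quotient maps. The paper itself offers no proof at all -- it introduces the statement as ``a trivial lemma'' and immediately applies it to $P=\BO$, $Q=\mathcal{Q}_8$, $X=\Sph^3$ -- so your argument simply supplies, in the standard and expected way, exactly the details the authors chose to omit.
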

\indent We will apply this lemma to $P=\BO$, $Q=\mathcal{Q}_8$ and $X=\Sph^3$. One has to be careful at this point: we let $\BO$ act on $\Sph^3$ on the \emph{left}, whereas $W=\Sym_3$ naturally acts on $\mathcal{F}(\R)$ on the \emph{right}. Hence we let $\BO$ act on the right on $\Sph^3$ by multiplication. It is straightforward to adapt our results to this case. For instance, we replace $\Delta_i=:\conv(q_1,q_2,q_3,q_4)$ by $\widehat{\Delta_i}:=\conv(q_1^{-1},q_2^{-1},q_3^{-1},q_4^{-1})$ and $\mathscr{F}_{3}$ by $\widehat{\mathscr{F}_{3}}:=\mathrm{pr}(\widehat{\mathscr{D}})$ where $\mathrm{pr}(x)={x}/{|x|}$ is the usual projection and $\widehat{\mathscr{D}}:=\bigcup_i\widehat{\Delta_i}$ and we can do the same for the cells in $\Sph^3$. Briefly, we just have to replace every quaternion appearing in sections \ref{3.1}, \ref{3.2} and \ref{3.3} by its inverse and left multiplications by right multiplications.
\newline
\indent Now, denoting by $\mathcal{F}:=SU_3(\C)/T\simeq SL_3(\C)/B$ the flag manifold, we have a diffeomorphism
\[\mathcal{F}(\R)\simeq SO_3(\R)/T(\R).\]
\indent Recall the surjective homomorphism $\mathrm{B} : \Sph^3 \twoheadrightarrow SO_3(\R)$, with kernel $\{\pm1\}$. We have a surjective homomorphism
\[\widetilde{\phi} : \Sph^3 \stackrel{\tiny{\mathrm{B}}}\twoheadrightarrow SO_3(\R) \twoheadrightarrow SO_3(\R)/T(\R)\simeq \mathcal{F}(\R).\]
Now, it is clear that $\mathrm{B}^{-1}(T(\R))=\{\pm1,\pm i,\pm j,\pm k\}=\mathcal{Q}_8$. The lemma \ref{quotient_action} applied to $G=\mathcal{Q}_8$, $N:=\{\pm1\}=Z(\mathcal{Q}_8)$ and $X=\Sph^3$ leads to the following result:
\begin{lem}\label{ident_spaces}
Denoting by $\mathcal{F}:=SU_3(\C)/T$ the flag manifold of type $A_2$, the above defined map $\widetilde{\phi}$ induces a diffeomorphism
\[\phi : \Sph^3/\mathcal{Q}_8 \stackrel{\tiny{\sim}}\longrightarrow \mathcal{F}(\R).\]
\end{lem}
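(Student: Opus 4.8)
The plan is to realize the asserted diffeomorphism as a concatenation of three identifications and then to check that the resulting map is precisely the one induced by $\widetilde\phi$. The whole argument is a formal application of Lemma~\ref{quotient_action}, once the relevant actions are matched up.

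First I would record the hypotheses needed to apply Lemma~\ref{quotient_action} with $P=\mathcal{Q}_8$ and $Q=\{\pm1\}$: the group $\mathcal{Q}_8$ acts freely on $\Sph^3$ by right multiplication, since it is a subgroup of the group $\Sph^3$ of unit quaternions and right translation by a nontrivial element is fixed-point free, and $\{\pm1\}=Z(\mathcal{Q}_8)$ is normal (indeed central). The lemma then furnishes a canonical diffeomorphism
\[\bigslant{\bigl(\Sph^3/\{\pm1\}\bigr)}{\bigl(\mathcal{Q}_8/\{\pm1\}\bigr)}\stackrel{\tiny{\sim}}\longrightarrow \Sph^3/\mathcal{Q}_8.\]

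Next I would identify the left-hand side with $\mathcal{F}(\R)$. The covering homomorphism $\mathrm{B}:\Sph^3\twoheadrightarrow SO_3(\R)$ has kernel $\{\pm1\}$, so it descends to a group isomorphism and diffeomorphism $\Sph^3/\{\pm1\}\stackrel{\tiny{\sim}}\to SO_3(\R)$. Because $\mathrm{B}$ is a homomorphism, $\mathrm{B}(xq)=\mathrm{B}(x)\mathrm{B}(q)$, so right multiplication by $\mathcal{Q}_8$ on $\Sph^3$ is intertwined with right multiplication by $\mathrm{B}(\mathcal{Q}_8)$ on $SO_3(\R)$; and since $\mathrm{B}^{-1}(T(\R))=\mathcal{Q}_8$ with $\ker\mathrm{B}=\{\pm1\}\subset\mathcal{Q}_8$, the restriction of $\mathrm{B}$ gives an isomorphism $\mathcal{Q}_8/\{\pm1\}\stackrel{\tiny{\sim}}\to T(\R)$. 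Hence the residual action on the left-hand quotient is exactly the right $T(\R)$-action on $SO_3(\R)$, so that quotient is $SO_3(\R)/T(\R)$, which is diffeomorphic to $\mathcal{F}(\R)$ by Remark~\ref{flags} in the case $n=3$. Concatenating the three diffeomorphisms gives $\Sph^3/\mathcal{Q}_8\simeq\mathcal{F}(\R)$, and it remains to see that this composite is induced by $\widetilde\phi$. For this I would compute the fibers of $\widetilde\phi$: from $\widetilde\phi(x)=\widetilde\phi(y)\iff \mathrm{B}(y^{-1}x)\in T(\R)\iff y^{-1}x\in\mathrm{B}^{-1}(T(\R))=\mathcal{Q}_8$ one reads off that $\widetilde\phi(x)=\widetilde\phi(y)$ if and only if $x\in y\mathcal{Q}_8$, i.e. the fibers are exactly the right $\mathcal{Q}_8$-orbits. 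Thus $\widetilde\phi$ factors as a continuous bijection through $\Sph^3/\mathcal{Q}_8$, and being a composite of covering maps it is a local diffeomorphism, so the induced $\phi$ is a diffeomorphism.

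The only genuinely delicate point is the equivariance bookkeeping in the middle step: one must be careful to quotient by the \emph{right} action, matching the convention fixed just before the statement, and to confirm that $\mathrm{B}$ carries the residual right $\mathcal{Q}_8/\{\pm1\}$-action to the defining right $T(\R)$-action on $SO_3(\R)$. Everything else is a formal chaining of Lemma~\ref{quotient_action}, the identification $\Sph^3/\{\pm1\}=SO_3(\R)$, and Remark~\ref{flags}.
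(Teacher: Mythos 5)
Your proof is correct and takes essentially the same route as the paper: the paper obtains the lemma precisely by applying Lemma \ref{quotient_action} with $P=\mathcal{Q}_8$, $Q=\{\pm1\}=Z(\mathcal{Q}_8)$ and $X=\Sph^3$, after observing that $\mathrm{B}^{-1}(T(\R))=\mathcal{Q}_8$ and identifying $\Sph^3/\{\pm1\}\simeq SO_3(\R)$ and $SO_3(\R)/T(\R)\simeq\mathcal{F}(\R)$. You merely spell out the details the paper leaves implicit (the transport of the residual right action along $\mathrm{B}$, the fiber computation for $\widetilde{\phi}$, and the covering-map argument), which is harmless.
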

\indent Now, one has $W=\Sym_3=\left<s_\alpha,s_\beta~|~s_\alpha^2=s_\beta^2=1,~s_\alpha s_\beta s_\alpha=s_\beta s_\alpha s_\beta\right>$ (the notation $s_\alpha,~s_\beta$ makes reference to the simple roots $\alpha$ and $\beta$ of the root system of type $A_2$). The reflections $s_\alpha$ ans $s_\beta$ can be represented in $SO_3(\R)$ by the following matrices
\[\dot{s_\alpha}=\left(\begin{smallmatrix}0 & -1 & 0 \\ 1 & 0 & 0 \\ 0 & 0 & 1\end{smallmatrix}\right),~\dot{s_\beta}=\left(\begin{smallmatrix}1 & 0 & 0 \\ 0 & 0 & -1 \\ 0 & 1 & 0\end{smallmatrix}\right).\]
These matrices may be obtained from $\Sph^3$ using $\mathrm{B}$:
\[\dot{s_\alpha}=\mathrm{B}\left(\frac{1+k}{\sqrt{2}}\right),~\dot{s_\beta}=\mathrm{B}\left(\frac{1+i}{\sqrt{2}}\right),\]
and this induces a well-defined isomorphism
\[\begin{array}{ccccc}\sigma & : & \BO/\mathcal{Q}_8 & \stackrel{\tiny{\sim}}\longrightarrow & \Sym_3 \\ & & (1+i)/\sqrt{2} & \longmapsto & s_\beta \\ & & (1+k)/\sqrt{2} & \longmapsto & s_\alpha\end{array}\]
Therefore, recalling that $\Sym_3=N_{SU_3(\C)}(T)/T=(N_{SO_3(\R)}(SO_3(\R)\cap T))/(SO_3(\R)\cap T)$ acts on $\mathcal{F}(\R)$ by multiplication on the right by a representative matrix, one obtains the following relation
\[\forall (x,g)\in\Sph^3\times\BO,~\phi(\overline{x})\cdot\sigma(\overline{g})=\widetilde{\phi}(xg).\]
\indent Henceforth, using the lemma \ref{quotient_action}, one obtains the following result:
\begin{prop}\label{ident_actions}
The diffeomorphism $\psi$ from the Lemma \ref{ident_spaces} induces a diffeomorphism
\[\overline{\phi} : \Sph^3/\BO \stackrel{\tiny{\sim}}\longrightarrow \mathcal{F}(\R)/\Sym_3.\]
\newline
\indent In particular, $\BO$-equivariant cellular structure on $\Sph^3$ defined in Theorem \ref{SPHOcells} induces an $\Sym_3$-equivariant cellular structure on the real flag manifold $\mathcal{F}(\R)$.
\end{prop}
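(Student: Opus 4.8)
The plan is to realize $\overline{\phi}$ as a composite of three canonical maps: the quotient diffeomorphism supplied by Lemma \ref{quotient_action}, the descent of the diffeomorphism $\phi$ of Lemma \ref{ident_spaces} to orbit spaces, and the identification $\BO/\mathcal{Q}_8\simeq\Sym_3$ provided by $\sigma$. Concretely, since $\varpi$ and $\gamma$ normalize $\mathcal{Q}_8$, the subgroup $\mathcal{Q}_8$ is normal in $\BO$; applying Lemma \ref{quotient_action} with $P=\BO$, $Q=\mathcal{Q}_8$ and $X=\Sph^3$ (with the right action) yields a free action of $\BO/\mathcal{Q}_8$ on $\Sph^3/\mathcal{Q}_8$ together with a canonical diffeomorphism
\[\bigslant{(\Sph^3/\mathcal{Q}_8)}{(\BO/\mathcal{Q}_8)} \stackrel{\tiny{\sim}}\longrightarrow \Sph^3/\BO.\]

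Next I would record that $\phi$ intertwines the residual action of $\BO/\mathcal{Q}_8$ on $\Sph^3/\mathcal{Q}_8$ with the right $\Sym_3$-action on $\mathcal{F}(\R)$, through the isomorphism $\sigma$. This is precisely the content of the compatibility relation established just above the statement: writing $\widetilde{\phi}(xg)=\phi(\overline{x}\cdot\overline{g})$, the identity $\phi(\overline{x})\cdot\sigma(\overline{g})=\widetilde{\phi}(xg)$ becomes
\[\phi(\overline{x}\cdot\overline{g}) = \phi(\overline{x})\cdot\sigma(\overline{g}),\]
i.e. $\phi$ is $\sigma$-equivariant. Since $\phi$ is a diffeomorphism and $\sigma$ an isomorphism, $\phi$ descends to a diffeomorphism of orbit spaces
\[\bigslant{(\Sph^3/\mathcal{Q}_8)}{(\BO/\mathcal{Q}_8)} \stackrel{\tiny{\sim}}\longrightarrow \mathcal{F}(\R)/\Sym_3,\]
and composing this with the inverse of the diffeomorphism of Lemma \ref{quotient_action} gives the desired $\overline{\phi}$.

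For the second assertion, I would argue that the cellular structure transports along the same chain of identifications. The $\BO$-equivariant cell decomposition of $\Sph^3$ from Theorem \ref{SPHOcells} is \emph{a fortiori} $\mathcal{Q}_8$-equivariant; as $\mathcal{Q}_8$ acts freely and cellularly, its orbit space $\Sph^3/\mathcal{Q}_8$ inherits a CW structure whose cells are the $\mathcal{Q}_8$-orbits of cells, and the residual $\BO/\mathcal{Q}_8$-action permutes these cells, hence is cellular. Transporting this structure through the $\sigma$-equivariant diffeomorphism $\phi$ endows $\mathcal{F}(\R)$ with a cell structure on which $\Sym_3$ acts cellularly; freeness of the $\Sym_3$-action (equivalently, of the $\BO/\mathcal{Q}_8$-action, by Lemma \ref{quotient_action}) makes it $\Sym_3$-equivariant in the required sense.

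The genuine content of the argument is entirely concentrated in the equivariance relation $\phi(\overline{x})\cdot\sigma(\overline{g})=\widetilde{\phi}(xg)$, which has been verified before the statement; everything else is the formal functoriality of taking quotients by a normal subgroup and of transporting a CW structure along an equivariant diffeomorphism. The main point requiring care is the consistent use of right actions throughout — the switch from the left $\BO$-action used in Sections \ref{3.1}--\ref{3.3} to the right action matching the right $\Sym_3$-action on $\mathcal{F}(\R)$ — together with the verification that the cells of $\Sph^3$ descend to honest cells of $\Sph^3/\mathcal{Q}_8$ without collapsing, which is guaranteed by freeness of $\mathcal{Q}_8$.
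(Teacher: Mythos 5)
Your proposal is correct and follows essentially the same route as the paper: the paper offers no separate written proof for this proposition, deducing it directly from Lemma \ref{quotient_action} applied to $P=\BO$, $Q=\mathcal{Q}_8$, $X=\Sph^3$ together with the equivariance relation $\phi(\overline{x})\cdot\sigma(\overline{g})=\widetilde{\phi}(xg)$ established just before the statement, which is exactly the argument you spell out. Your additional care about the left-versus-right action switch and about cells descending without collapsing under the free $\mathcal{Q}_8$-action matches the paper's own remarks and the construction later used in Theorem \ref{S3equiv}.
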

\begin{cor}\label{pi1}
The fundamental groups of the real flag manifold $\mathcal{F}(\R)$ and of its quotient space by $\Sym_3$ are given by
\[\pi_1(\mathcal{F}(\R),\ast)=\mathcal{Q}_8~~~~\text{and}~~~~\pi_1(\mathcal{F}(\R)/\Sym_3,\ast)=\BO.\]
\end{cor}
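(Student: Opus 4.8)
The plan is to identify both fundamental groups as groups of deck transformations of the universal cover $\Sph^3$. The key inputs are that $\Sph^3$ is simply connected and that both $\mathcal{Q}_8\le\BO$ act freely on it.

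First I would recall that $\Sph^3$ is simply connected and that the finite groups $\mathcal{Q}_8\le\BO$ act freely on it by quaternion multiplication, as set up at the beginning of Section \ref{octahedral}. A finite group acting freely on a Hausdorff manifold acts properly discontinuously, so the quotient maps $\Sph^3\to\Sph^3/\mathcal{Q}_8$ and $\Sph^3\to\Sph^3/\BO$ are covering maps; since the total space $\Sph^3$ is simply connected, these are universal covering maps whose deck transformation groups are the acting groups $\mathcal{Q}_8$ and $\BO$ respectively. By the standard identification of $\pi_1$ of the base with the deck transformation group of its universal cover — valid here since $\Sph^3$, being a manifold, is path-connected, locally path-connected and semilocally simply connected — this gives $\pi_1(\Sph^3/\mathcal{Q}_8)\cong\mathcal{Q}_8$ and $\pi_1(\Sph^3/\BO)\cong\BO$.

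It then remains only to transport these isomorphisms across the diffeomorphisms already established. The diffeomorphism $\phi:\Sph^3/\mathcal{Q}_8\xrightarrow{\sim}\mathcal{F}(\R)$ of Lemma \ref{ident_spaces} yields $\pi_1(\mathcal{F}(\R),\ast)\cong\mathcal{Q}_8$, and the diffeomorphism $\overline{\phi}:\Sph^3/\BO\xrightarrow{\sim}\mathcal{F}(\R)/\Sym_3$ of Proposition \ref{ident_actions} yields $\pi_1(\mathcal{F}(\R)/\Sym_3,\ast)\cong\BO$, which are exactly the two claimed isomorphisms.

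I do not expect any real obstacle: the argument is a direct application of covering space theory once the free actions and the two diffeomorphisms are in hand. The only point deserving a moment of care is checking the hypotheses of the covering-space dictionary, namely that the finite-group actions are properly discontinuous (automatic, since the groups are finite and act freely on a Hausdorff space) and that $\Sph^3$ satisfies the mild local conditions needed to identify the deck group with $\pi_1$ of the quotient (it does, being a manifold and hence locally contractible).
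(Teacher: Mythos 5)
Your proof is correct and matches the paper's (implicit) argument: the paper states this corollary without proof, precisely because it follows from Lemma \ref{ident_spaces} and Proposition \ref{ident_actions} exactly as you describe, by identifying $\mathcal{F}(\R)$ and $\mathcal{F}(\R)/\Sym_3$ with the quotients $\Sph^3/\mathcal{Q}_8$ and $\Sph^3/\BO$ of the simply connected sphere by free finite group actions, whose deck transformation groups give the fundamental groups. Your additional care about proper discontinuity and the local conditions on $\Sph^3$ is a sound (if routine) verification of the hypotheses of the covering-space dictionary.
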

\indent We are now in a position to state and prove the principal result of this section:
\begin{theo}\label{S3equiv}
The real flag manifold $\mathcal{F}(\R)=SO_3(\R)/T(\R)$ admits an $\Sym_3$-equivariant cellular decomposition with orbit representatives cells given by
\[\mathfrak{e}^i_j:=\phi\left(\pi_{\mathcal{Q}_8}\left((e^i_j)^{-1}\right)\right),\]
where $\pi_{\mathcal{Q}_8} : \Sph^3\to\Sph^3/\mathcal{Q}_8$ is the natural projection, $\phi : \Sph^3/\mathcal{Q}_8 \to \mathcal{F}(\R)$ is the $\Sym_3$-equivariant diffeomorphism from the Proposition \ref{ident_spaces} and $e^i_j$ are the cells of the $\BO$-equivariant cellular decomposition from the Theorem \ref{SPHOcells}.
\newline 
\indent Furthermore, the associated cellular homology complex is the chain complex of free right $\Z[\Sym_3]$-modules
\[\mathcal{K}_{\Sym_3}:=\left(\xymatrix{\Z[\Sym_3] \ar^{\partial_3}[r] & \Z[\Sym_3]^3 \ar^{\partial_2}[r] & \Z[\Sym_3]^3 \ar^{\partial_1}[r] & \Z[\Sym_3]}\right),\]
where
\[\partial_1=\begin{pmatrix}1-s_\beta & 1-w_0 & 1-s_\alpha\end{pmatrix},~~~~\partial_2=\begin{pmatrix}s_\alpha s_\beta & 1 & w_0-1 \\ s_\alpha-1 & s_\alpha s_\beta & 1 \\ 1 & s_\beta-1 & s_\alpha s_\beta\end{pmatrix},~~~~\partial_3=\begin{pmatrix}1-s_\beta \\ 1-w_0 \\ 1-s_\alpha\end{pmatrix}.\]
\end{theo}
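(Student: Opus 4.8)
The plan is to push the $\BO$-equivariant structure of $\Sph^3$ through the identifications of Lemma \ref{ident_spaces} and Proposition \ref{ident_actions}, and then to obtain the differentials of $\mathcal{K}_{\Sym_3}$ by transporting those of $\mathcal{K}_\BO$ along the quotient homomorphism $\BO \twoheadrightarrow \BO/\mathcal{Q}_8 \xrightarrow{\sigma} \Sym_3$.

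\textbf{Descent of the cell structure.} First I would invoke Lemma \ref{quotient_action} with $P = \BO$, $Q = \mathcal{Q}_8$ and $X = \Sph^3$, using the right action obtained from Sections \ref{3.1}--\ref{3.3} by replacing every quaternion with its inverse. Since $\mathcal{Q}_8 \trianglelefteq \BO$ with $\BO/\mathcal{Q}_8 \cong \Sym_3$, the group $\Sym_3$ acts freely on $\Sph^3/\mathcal{Q}_8 \cong \mathcal{F}(\R)$. The $\BO$-equivariant decomposition of Theorem \ref{SPHOcells} descends along $\pi_{\mathcal{Q}_8}$: a $\BO$-orbit of cells in $\Sph^3$ has $|\BO| = 48$ elements, on which $\mathcal{Q}_8$ acts freely (the action on $\Sph^3$ being free), so its image in $\Sph^3/\mathcal{Q}_8$ consists of $48/8 = 6 = |\Sym_3|$ cells forming a single free $\Sym_3$-orbit. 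This shows the $\mathfrak{e}^i_j = \phi(\pi_{\mathcal{Q}_8}((e^i_j)^{-1}))$ are well defined and constitute a system of orbit representatives for a $\Sym_3$-equivariant cellular decomposition of $\mathcal{F}(\R)$, one cell per generator of $\mathcal{K}_\BO$.

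\textbf{The chain complex.} For the differentials I would use the straightforward extension of Lemma \ref{quotient_complex} to the intermediate normal subgroup $\mathcal{Q}_8$: if $X$ is a free $G$-CW complex and $H \trianglelefteq G$, then the cellular chain complex of $X/H$ is the complex of free $\Z[G/H]$-modules $C_*(X) \otimes_{\Z[H]} \Z$, whose matrices are obtained from those of $C_*(X)$ by applying the projection $\Z[G] \to \Z[G/H]$ entrywise. The only extra bookkeeping is the passage from \emph{left} $\Z[\BO]$-modules to \emph{right} $\Z[\Sym_3]$-modules, implemented by the anti-automorphism $g \mapsto g^{-1}$; degreewise this transposes each differential matrix and inverts its entries. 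Composing these operations, I would check that each entry $g$ of $\partial_1, \partial_2, \partial_3$ in Proposition \ref{Ochain} is sent to $\sigma(\overline{g^{-1}})$ and the matrices transposed. The required images are $\overline{\tau_i} = s_\beta$ and $\overline{\tau_k} = s_\alpha$ (definition of $\sigma$), $\overline{\tau_j} = w_0$ (since $\tau_j^2 = j \in \mathcal{Q}_8$ forces $\overline{\tau_j}$ to be the remaining involution $w_0 = s_\alpha s_\beta s_\alpha$; equivalently $\mathrm{B}(\tau_j)$ represents the transposition $(1\,3)$), and $\overline{\omega_i} = \overline{\omega_j} = \overline{\omega_k} = s_\beta s_\alpha$ (for instance $\omega_j = \tau_i \tau_k$ gives $\overline{\omega_j} = s_\beta s_\alpha$, the three $\omega$'s being conjugate elements of order $3$), so that each inverse $\overline{\omega}^{-1} = s_\alpha s_\beta$. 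Feeding these values into the transposed, inverted matrices of $\mathcal{K}_\BO$ reproduces exactly $\partial_1, \partial_2, \partial_3$ of the statement, up to an overall sign on $\partial_1$ which is immaterial since it is absorbed by negating the degree-$0$ generator.

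\textbf{Main obstacle.} The step demanding the most care is the combined left-to-right and quotient bookkeeping. One must verify that the anti-automorphism genuinely transposes the differentials: the diagonal entries $\omega_\bullet$ of $\partial_2$ and the constant entries are fixed in position, but the off-diagonal entries $\tau_\bullet - 1$ are genuinely permuted, so the identification is not a mere index-preserving relabeling and must be confirmed entry by entry. Everything else---freeness of the $\Sym_3$-action, the count of six cells per orbit, and the descent of the decomposition---is formal once Lemma \ref{quotient_action} and the identifications of Lemma \ref{ident_spaces} and Proposition \ref{ident_actions} are in hand.
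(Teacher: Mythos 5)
Your proposal follows the paper's own route: descend the structure via Lemma \ref{quotient_action} and Proposition \ref{ident_actions}, compute the images of $\tau_i,\tau_j,\tau_k,\omega_0,\omega_i,\omega_j,\omega_k$ under $\pi^\BO:\BO\twoheadrightarrow\BO/\mathcal{Q}_8\simeq\Sym_3$, and produce the differentials by transposing the matrices of $\mathcal{K}_\BO$ and replacing each entry $q$ by $\pi^\BO(q^{-1})$. Your values agree with the paper's ($\tau_i\mapsto s_\beta$, $\tau_j\mapsto w_0$, $\tau_k\mapsto s_\alpha$, $\omega_i,\omega_j,\omega_k\mapsto s_\beta s_\alpha$, $\omega_0\mapsto s_\alpha s_\beta$), and your remark on the sign of $\partial_1$ is a correct observation about a discrepancy the paper passes over silently.

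There is, however, one step whose justification is genuinely wrong, even though its conclusion is right: you deduce $\overline{\omega_i}=\overline{\omega_k}=s_\beta s_\alpha$ from $\overline{\omega_j}=s_\beta s_\alpha$ on the grounds that the three $\omega$'s are conjugate. Conjugacy only forces the images to be conjugate in $\Sym_3$, and the conjugacy class of $s_\beta s_\alpha$ is $\{s_\beta s_\alpha,\,s_\alpha s_\beta\}$, so this argument cannot distinguish the two $3$-cycles. Worse, it proves too much: all eight order-$6$ elements $\tfrac{1}{2}(1\pm i\pm j\pm k)$ of $\BO$ are conjugate (note the $\omega$'s have order $6$, not $3$; it is their images that have order $3$), so $\omega_0$ is conjugate to $\omega_j$, yet $\overline{\omega_0}=\overline{\tau_i}\,\overline{\tau_j}=s_\beta w_0=s_\alpha s_\beta\ne s_\beta s_\alpha=\overline{\omega_j}$. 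Since the diagonal of the stated $\partial_2$ must come out as $s_\alpha s_\beta=\left(\overline{\omega}\right)^{-1}$ and not its inverse, pinning down which $3$-cycle each $\omega$ maps to is exactly what the verification requires, so this step cannot be waved through by conjugacy. The fix is immediate and uses precisely the computation you already did for $\omega_j=\tau_i\tau_k$: from $\omega_i=\tau_k\tau_j$ and $\omega_k=\tau_j\tau_i$ one gets $\overline{\omega_i}=s_\alpha w_0=s_\beta s_\alpha$ and $\overline{\omega_k}=w_0 s_\beta=s_\beta s_\alpha$. With that replacement, your argument is complete and coincides with the paper's proof.
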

\begin{proof}
This only relies on Proposition \ref{ident_actions} and the fact that $((e^i_j)^{-1})_{i,j}$ is an $\BO$-equivariant cell decomposition of $\Sph^3$, the group $\BO$ acting by right multiplication on the sphere. Next, we have to determine the images of the points of $\BO$ we used to construct $\widehat{\mathscr{F}_{\BO,3}}$ under the projection 
\[\pi^\BO:\BO\twoheadrightarrow\BO/\mathcal{Q}_8\stackrel{\tiny{\sigma}}\simeq\Sym_3.\]
Recall that, denoting by $s_\alpha$ and $s_\beta$ the simple reflections in the Weyl group $W=\Sym_3$, we have
\[\Sym_3=\left<s_\alpha,s_\beta~|~s_\alpha^2=s_\beta^2=1,~s_\alpha s_\beta s_\alpha=s_\beta s_\alpha s_\beta\right>=\{1,s_\alpha,s_\beta,s_\alpha s_\beta,s_\beta s_\alpha,s_\alpha s_\beta s_\alpha\}\]
and we denote by $w_0:=s_\alpha s_\beta s_\alpha$ the \emph{longest element} of $\Sym_3$. We compute
\[\tau_i\mapsto s_\beta,~\tau_j\mapsto w_0,~\tau_k\mapsto s_\alpha,~\omega_i,\omega_j,\omega_k\mapsto s_\beta s_\alpha,~\omega_0\mapsto s_\alpha s_\beta.\]
Thus, the resulting cellular homology chain complex can be computed from the one in Theorem \ref{SPHOcells}, replacing each coefficient $q\in\BO$ in $\partial_i$ by $\pi^\BO(q^{-1})$ and transposing the matrices.
\end{proof}

We can now deduce the action of $\Sym_3$ on the cohomology of $\mathcal{F}(\R)$. Since $\Sym_3$ acts on the right of $\mathcal{F}(\R)$ and since cohomology is a contravariant functor, $\Sym_3$ acts on the left on $H^*(\mathcal{F}(\R),\Z)$.
\newline
\indent First of all, define the integral representation
\[\mathbf{2} : \Sym_3 \to GL_2(\Z)\]
by
\[\mathbf{2}(s_\alpha)=\begin{pmatrix}0 & 1\\1 & 0\end{pmatrix},~~\mathbf{2}(s_\beta)=\begin{pmatrix}1 & 0\\-1 & -1\end{pmatrix}.\]
Then, $\mathbf{2}$ is an integral form of the 2-dimensional irreducible complex representation of $\Sym_3$. Its reduction modulo 2 is the irreducible $\mathbb{F}_2[\Sym_3]$-module ${\mathbf{2}}\otimes\mathbb{F}_2$ of dimension 2. Moreover, we let $\overline{\mathbf{2}}$ be the representation $\Z[\Sym_3] \to \mathrm{End}_\Z(\mathbb{F}_2^2)$.
\newline
\indent For convenience, we consider $\Z[\Sym_3]$ as a graded algebra concentrated in degree zero.
\begin{cor}\label{S3actioncohomology}
The cohomology $H^*(\mathcal{F}(\R),\Z)$ of $\mathcal{F}(\R)$ is a graded commutative left $\Z[\Sym_3]$-module such that
\[H^i(\mathcal{F}(\R),\Z)=\left\{\begin{array}{cc}\mathds{1} & \text{if }i=0,3, \\ \overline{\mathbf{2}} & \text{if }i=2, \\ 0 & \text{otherwise}.\end{array}\right.\]

Moreover, the action of $\Sym_3$ on $\mathcal{F}(\R)$ preserves the orientation.

In particular, reducing modulo 2 gives
\[H^i(\mathcal{F}(\R),\mathbb{F}_2)=\left\{\begin{array}{cc} \mathds{1} & \text{if }i=0,3, \\ \mathbf{2}\otimes\mathbb{F}_2 & \text{if }i=1,2,\\0 & \text{otherwise}.\end{array}\right.\]
\end{cor}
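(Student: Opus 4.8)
The plan is to read off the cohomology, together with its $\Sym_3$-action, from the equivariant cellular chain complex $\mathcal{K}_{\Sym_3}$ of Theorem \ref{S3equiv}. Since $\Sym_3$ acts freely and cellularly on $\mathcal{F}(\R)$, the complex $\mathcal{K}_{\Sym_3}$ is exactly the cellular chain complex $C_\ast(\mathcal{F}(\R),\Z)$, viewed as a complex of (right) $\Z[\Sym_3]$-modules, so its homology computes $H_\ast(\mathcal{F}(\R),\Z)$ as an $\Sym_3$-module. I would first determine this homology and its module structure, and then pass to cohomology by duality.

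For the homology I would use the identification $\mathcal{F}(\R)\simeq\Sph^3/\mathcal{Q}_8$ of Lemma \ref{ident_spaces}: this is a closed orientable spherical space form with $\pi_1=\mathcal{Q}_8$ (Corollary \ref{pi1}), so by Hurewicz $H_0=H_3=\Z$, $H_1=\mathcal{Q}_8^{\mathrm{ab}}\cong(\Z/2)^2$ and $H_2=0$; this is of course consistent with computing the homology of the explicit matrices $\partial_1,\partial_2,\partial_3$. The only nontrivial module to pin down is $H_1$. Here the point is that $\Sym_3=\BO/\mathcal{Q}_8$ acts on $H_1=\mathcal{Q}_8^{\mathrm{ab}}$ through the outer (conjugation) action of $\BO$ on its normal subgroup $\mathcal{Q}_8$. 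A short quaternionic computation shows this conjugation permutes the three classes $\{\pm i\},\{\pm j\},\{\pm k\}$ exactly as $\Sym_3$ permutes three points, hence acts on $(\Z/2)^2$ by permuting its three nonzero vectors; this is precisely the module $\overline{\mathbf{2}}$, as one checks against the explicit images $\tau_i\mapsto s_\beta$, $\tau_j\mapsto w_0$, $\tau_k\mapsto s_\alpha$ recorded in the proof of Theorem \ref{S3equiv}. Thus $H_1(\mathcal{F}(\R),\Z)\cong\overline{\mathbf{2}}$.

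Next I would transfer to cohomology. The orientation statement comes first: every element of $\BO$ acts on $\Sph^3\subset\R^4$ by quaternion multiplication, i.e. as an element of $SO(4)$, hence orientation-preservingly, so the induced $\Sym_3$-action on the quotient $\mathcal{F}(\R)$ preserves orientation. Poincar\'e duality for the closed orientable $3$-manifold $\mathcal{F}(\R)$ is therefore $\Sym_3$-equivariant and gives $H^k(\mathcal{F}(\R),\Z)\cong H_{3-k}(\mathcal{F}(\R),\Z)$ as $\Sym_3$-modules (using that representations of $\Sym_3$ are self-dual, so the left/right bookkeeping costs nothing). This yields $H^0=H_3=\mathds{1}$, $H^1=H_2=0$, $H^2=H_1=\overline{\mathbf{2}}$ and $H^3=H_0=\mathds{1}$; the Universal Coefficient Theorem gives the same abelian groups through $H^2\cong\mathrm{Ext}^1_\Z(H_1,\Z)$. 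Finally, reducing modulo $2$ (UCT with $\F_2$-coefficients, or mod-$2$ Poincar\'e duality) turns the $\mathrm{Ext}$-contributions into $\mathrm{Hom}$-contributions in one lower degree, producing $H^0=H^3=\mathds{1}$ and $H^1=H^2=\mathbf{2}\otimes\F_2$.

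The main obstacle is the $\Sym_3$-module structure on the middle cohomology $H^2$, not its underlying group: one must correctly identify the action on $H_1=\mathcal{Q}_8^{\mathrm{ab}}$ as the permutation representation $\overline{\mathbf{2}}$ (via the conjugation action of $\BO$ on $\mathcal{Q}_8$), and then carry it through the duality $H^2\cong H_1$ while keeping track of the passage from the right action on homology to the left action on cohomology — the latter being harmless only because every element of $\Sym_3$ is conjugate to its inverse. Establishing orientation-preservation is what rules out a sign-twist and guarantees that $H^3$ (and the duality itself) is the trivial module $\mathds{1}$ rather than the sign representation.
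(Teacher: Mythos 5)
Your proposal is correct, but it reaches the two computational cores of the corollary by a genuinely different route from the paper. The paper stays entirely inside the explicit complex $\mathcal{K}_{\Sym_3}$ of Theorem \ref{S3equiv}: it computes $H_3(\mathcal{F}(\R),\Z)=\ker\partial_3=\Z\langle\textstyle\sum_{w\in\Sym_3}w\rangle$, declares this generator to be the fundamental class (so that $[\mathcal{F}(\R)]\cdot w=[\mathcal{F}(\R)]$ gives orientation-preservation), applies equivariant Poincar\'e duality just as you do, and then pins down $H_1$ by exhibiting explicit cycles $x,y\in\ker\partial_1$, an explicit $\partial_2$-preimage proving the relation $y\cdot s_\beta=-x-y$, and the resulting matrices for $s_\alpha$ and $s_\beta$. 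You instead obtain the homology groups from covering-space theory ($\pi_1(\mathcal{F}(\R))=\mathcal{Q}_8$ and Hurewicz), the module structure on $H_1=\mathcal{Q}_8^{\mathrm{ab}}$ from the conjugation action of $\BO$ on its normal subgroup $\mathcal{Q}_8$ (the outer action on $\pi_1$ induced through deck transformations), and orientation-preservation from the fact that quaternion multiplication acts through $SO(4)$. Both routes are sound; two points in yours should be made explicit to be airtight. First, the deck-transformation bookkeeping showing that right translation by $g$ sends the loop labelled $q$ to the loop labelled $g^{-1}qg$ — you sketch this, and it is exactly what legitimizes the conjugation claim. Second, the step asserting that a faithful $\Sym_3$-action permuting the three nonzero vectors of $(\Z/2)^2$ must be $\overline{\mathbf{2}}$ holds because $\mathrm{Aut}((\Z/2)^2)\cong GL_2(\F_2)\cong\Sym_3$ and every automorphism of $\Sym_3$ is inner, so the isomorphism type is independent of how one identifies $\BO/\mathcal{Q}_8$ with $\Sym_3$; this is what spares you from re-deriving the paper's table $\tau_i\mapsto s_\beta$, $\tau_j\mapsto w_0$, $\tau_k\mapsto s_\alpha$ (the same remark about inner automorphisms also disposes of the left/right twist, in place of your conjugate-to-inverse argument). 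As for what each approach buys: yours is shorter and more conceptual — it explains \emph{why} the middle cohomology is the two-dimensional representation (the axes $\pm i,\pm j,\pm k$ are permuted) and does not depend on the correctness of the explicit differentials — while the paper's is self-contained at the chain level, produces explicit cycle representatives for $H_1$, and doubles as a consistency check of the complex $\mathcal{K}_{\Sym_3}$ itself.
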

\begin{proof}
Let
\[\sigma:=\sum_{w\in\Sym_3}w\]
and recall the cellular homology complex $\mathcal{K}_{\Sym_3}$ from the Theorem \ref{S3equiv}. We can directly compute
\[H_3(\mathcal{F}(\R),\Z)=\ker \partial_3=\Z\left<\sigma\right>\simeq\Z.\]
\indent We determine an orientation of $\mathcal{F}(\R)$ by choosing as fundamental class
\[[\mathcal{F}(\R)]:=\sigma.\]
Thus, for $w\in\Sym_3$ one has $[\mathcal{F}(\R)]\cdot w=[\mathcal{F}(\R)]$ and so, the right action of $\Sym_3$ on $\mathcal{F}(\R)$ preserves the orientation. Denoting by 
\[\mathcal{D}^i:=\left([\mathcal{F}(\R)]\cap -\right) : H^i(\mathcal{F}(\R),\Z) \stackrel{\tiny{\sim}}\to H_{3-i}(\mathcal{F}(\R),\Z)\]
the associated Poincar\'e duality, the naturality theorem (see \cite[Theorem 67.2]{munkreselts}) yields
\[w_*\mathcal{D}^iw^*=\mathcal{D}^i.\]
For a right $\Sym_3$-set $X$, we naturally write $X^{op}$ for the left $\Sym_3$-set $X$ endowed with the action $w\cdot x:=xw^{-1}$. Then, the last equation becomes a reformulation of the property
\[\mathcal{D}^i\in\ho_{\Z[\Sym_3]}\left(H^i(\mathcal{F}(\R),\Z),H_{3-i}(\mathcal{F}(\R),\Z)^{op}\right)\]
and the left modules $H^i(\mathcal{F}(\R),\Z)$ and $H_{3-i}(\mathcal{F}(\R),\Z)^{op}$ are thus isomorphic.

\indent We have show that $H_1(\mathcal{F}(\R),\Z)^{op}\simeq\overline{\mathbf{2}}$. Denote respectively by $x$ and $y$ the classes of $\left(\begin{smallmatrix}1+s_\beta\\0\\0\end{smallmatrix}\right)\in\ker \partial_1$ and $\left(\begin{smallmatrix}s_\alpha+s_\beta s_\alpha\\0\\0\end{smallmatrix}\right)\in\ker \partial_1$ in $H_1(\mathcal{F}(\R),\Z)$. Then we have $H_1(\mathcal{F}(\R),\Z)=\Z\left<x,y\right>\simeq\left(\Z/2\Z\right)^2$ and since
\[x+y+\left(\begin{smallmatrix}s_\alpha s_\beta+w_0\\0\\0\end{smallmatrix}\right)=\left(\begin{smallmatrix}\sigma\\0\\0\end{smallmatrix}\right)=\partial_2\left(\begin{smallmatrix}1+2 s_\alpha-s_\beta s_\alpha+s_\alpha s_\beta \\ 1+s_\alpha+s_\beta \\ -1-s_\beta-s_\beta s_\alpha\end{smallmatrix}\right)\]
we get 
\[y\cdot s_\beta=\left(\begin{smallmatrix}s_\alpha s_\beta+w_0\\0\\0\end{smallmatrix}\right)=-x-y.\] 
Next, it is easy to compute that $x\cdot s_\alpha=y$, $x\cdot s_\beta=x$ and $y\cdot s_\alpha=x$. These equations mean that, with respect to the basis $\{x,y\}$ of the free $\mathbb{F}_2$-module $H_1(\mathcal{F}(\R),\mathbb{F}_2)^{op}$, the matrices of the action of $s_\alpha$ and $s_\beta$ are given by
\[\mathrm{Mat}_{\{x,y\}}(s_\alpha)=\begin{pmatrix}0 & 1\\1 & 0\end{pmatrix},~~\mathrm{Mat}_{\{x,y\}}(s_\beta)=\begin{pmatrix}1 & 0\\1 & 1\end{pmatrix}\]
and these are indeed the matrices defining $\mathbf{2}\otimes\mathbb{F}_2$.
\end{proof}

Finally, using Figure \ref{octarep}, we can describe the $3$-cells in a more combinatorial way. More precisely, one can describe all the curved tetrahedra having a given element $w\in\Sym_3$ in its boundary. By right multiplication by $w^{-1}$, we may assume that $w=1$. First consider the octahedron as in Figure \ref{octarep}, with vertices (and centers of faces) given by the images of the ones of \ref{octarep} under the projection $\pi^\BO : \BO \twoheadrightarrow \Sym_3$ as in Figure \ref{cellwith1}. A curved tetrahedron containing $1$ can be described in the following way:
\begin{enumerate}
\item Choose a face $F$ of the octahedron,
\item Choose an edge of $F$,
\item The curved tetrahedron has its vertices given by the center of $F$, the two vertices of the chosen edge of $F$ and $1$.
\end{enumerate}
\begin{center}
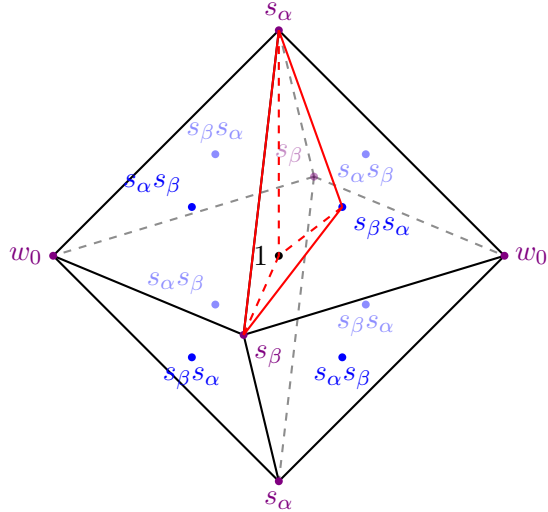
\begin{figure}[h!]
\begin{tikzpicture}[x={(1cm,0cm)},y={(0cm,1cm)},z={(-2mm,-4.5mm)},scale=3,thick]
  \coordinate (u) at (0,0,0);
  \coordinate (tip) at (0,0,0.78);
  \coordinate (tim) at (0,0,-0.78);
  \coordinate (tjp) at (1,0,0);
  \coordinate (tjm) at (-1,0,0);
  \coordinate (tkp) at (0,1,0);
  \coordinate (tkm) at (0,-1,0);
  \coordinate (o0p) at (1/3,1/3,0.26);
  \coordinate (o0m) at (-1/3,-1/3,-0.26);
  \coordinate (ojp) at (-1/3,1/3,0.26);
  \coordinate (ojm) at (1/3,-1/3,-0.26);
  \coordinate (oim) at (-1/3,-1/3,0.26);
  \coordinate (oip) at (1/3,1/3,-0.26);
  \coordinate (okp) at (1/3,-1/3,0.26);
  \coordinate (okm) at (-1/3,1/3,-0.26);
  
  \draw (tip)--(tjp);
  \draw (tip)--(tjm);
  \draw (tip)--(tkp);
  \draw (tip)--(tkm);
  \draw[opacity=0.45,dashed] (tim)--(tjp);
  \draw[opacity=0.45,dashed] (tim)--(tjm);
  \draw[opacity=0.45,dashed] (tim)--(tkp);
  \draw[opacity=0.45,dashed] (tim)--(tkm);
  \draw (tjm)--(tkm);
  \draw (tkm)--(tjp);
  \draw (tjp)--(tkp);
  \draw (tkp)--(tjm);
  
  \fill[fill=black] (u) circle (0.5pt);
  \fill[fill=black,color=violet] (tip) circle (0.5pt);
  \fill[fill=black,opacity=0.45,dashed,color=violet] (tim) circle (0.5pt);
  \fill[fill=black,color=violet] (tjp) circle (0.5pt);
  \fill[fill=black,color=violet] (tjm) circle (0.5pt);
  \fill[fill=black,color=violet] (tkp) circle (0.5pt);
  \fill[fill=black,color=violet] (tkm) circle (0.5pt);
  \fill[fill=black,color=blue] (o0p) circle (0.5pt);
  \fill[fill=black,opacity=0.45,dashed,color=blue] (o0m) circle (0.5pt);
  \fill[fill=black,opacity=0.45,dashed,color=blue] (oip) circle (0.5pt);
  \fill[fill=black,color=blue] (oim) circle (0.5pt);
  \fill[fill=black,color=blue] (ojp) circle (0.5pt);
  \fill[fill=black,opacity=0.45,dashed,color=blue] (ojm) circle (0.5pt);
  \fill[fill=black,color=blue] (okp) circle (0.5pt);
  \fill[fill=black,opacity=0.45,dashed,color=blue] (okm) circle (0.5pt);
  
  \draw (u) node[left]{$1$};
  \draw[color=violet] (tip) node[below right]{$s_\beta$};
  \draw[opacity=0.45,color=violet] (tim) node[above left]{$s_\beta$};
  \draw[color=violet] (tjp) node[right]{$w_0$};
  \draw[color=violet] (tjm) node[left]{$w_0$};
  \draw[color=violet] (tkp) node[above]{$s_\alpha$};
  \draw[color=violet] (tkm) node[below]{$s_\alpha$};
  \draw[color=blue] (o0p) node[below right]{$s_\beta s_\alpha$};
  \draw[opacity=0.45,color=blue] (o0m) node[above left]{$s_\alpha s_\beta$};
  \draw[opacity=0.45,color=blue] (oip) node[below]{$s_\alpha s_\beta$};
  \draw[color=blue] (oim) node[below]{$s_\beta s_\alpha$};
  \draw[color=blue] (ojp) node[above left]{$s_\alpha s_\beta$};
  \draw[opacity=0.45,color=blue] (ojm) node[below]{$s_\beta s_\alpha$};
  \draw[color=blue] (okp) node[below]{$s_\alpha s_\beta$};
  \draw[opacity=0.45,color=blue] (okm) node[above]{$s_\beta s_\alpha$};
  
  \draw[color=red] (tip)--(tkp);
  \draw[color=red] (tkp)--(o0p);
  \draw[color=red] (o0p)--(tip);
  \draw[color=red,dashed] (tkp)--(u);
  \draw[color=red,dashed] (o0p)--(u);
  \draw[color=red,dashed] (tip)--(u);
\end{tikzpicture}
\caption{A curved tetrahedron in $\mathcal{F}(\R)$ containing $1$ in its boundary.}
\label{cellwith1}
\end{figure}
\end{center}
\begin{rem}
Note that in this representation, many different cells can have the same vertices. For instance, the $1$-cell formed by the edge linking $1$ to the $w_0$ on the right, and then from the other copy of $w_0$ on the left, back to one is not a trivial path in $\mathcal{F}(\R)$. In fact, it corresponds to the element $j$ of the group $\mathcal{Q}_8\simeq\pi_1(\mathcal{F}(\R),1)$.
\end{rem}

\section{The icosahedral case}\label{icosahedral}
\subsection{Fundamental domain}
\hfill\\

We shall use for the binary icosahedral group $\BI$ of order 120 exactly the same method as for $\BO$. First, we are looking for a fundamental domain for $\BI$ in $\Sph^3$. To do this, we consider the orbit polytope in $\R^4$
\[\Pol:=\conv(\BI).\]
\newline
\indent This polytope has 120 vertices, 720 edges, 1200 faces and 600 facets and is known as the \emph{600-cell} (or the \emph{hexacosichoron}, or even the \emph{tetraplex}). Since $\BI$ acts freely on $\Pol_3$, there must be exactly five orbits in $\Pol_3$. Here again, we consider some elements of $\BI$, also expressed in terms of the Coxeter-Moser generators $s$ and $t$ and with $\varphi:=(1+\sqrt{5})/2$:
\[\left\{\begin{array}{ll}
\sigma_i^+:=\frac{\varphi+\varphi^{-1}i+j}{2}=t, \\[.5em]
\sigma_i^-:=\frac{\varphi+\varphi^{-1}i-j}{2}=st^{-2},\end{array}\right.~~\left\{\begin{array}{ll}
\sigma_j^+:=\frac{\varphi+\varphi^{-1}j-k}{2}=ts^{-1}t, \\[.5em]
\sigma_j^-:=\frac{\varphi-\varphi^{-1}j-k}{2}=s^{-1}t,\end{array}\right.~~\left\{\begin{array}{ll}
\sigma_k^+:=\frac{\varphi+i+\varphi^{-1}k}{2}=st^{-1}, \\[.5em]
\sigma_k^-:=\frac{\varphi+i-\varphi^{-1}k}{2}=s^{-1}t^2.\end{array}\right.\]
\newline
\indent As for $\BO$, we may find explicit representatives for the $\BI$-orbits of $\Pol_3$:
\begin{prop}\label{Ifunddom}
The following tetrahedra (in $\R^4$)
\[\Delta_1:=[1,\sigma_k^-,\sigma_k^+,\sigma_i^+],~\Delta_2:=[1,\sigma_k^-,\sigma_i^+,\sigma_j^+],~\Delta_3:=[1,\sigma_k^-,\sigma_j^+,\sigma_j^-],\]
\[\Delta_4:=[1,\sigma_k^-,\sigma_j^-,\sigma_i^-],~\Delta_5:=[1,\sigma_k^-,\sigma_i^-,\sigma_k^+]\]
form a system of representatives of $\BI$-orbits of facets of $\Pol$. Furthermore, the subset of $\Pol$ defined by
\[\mathscr{D}:=\bigcup_{i=1}^5 \Delta_i\]
is a (connected) polytopal complex and is a fundamental domain for the action of $\BI$ on $\partial\Pol$.
\end{prop}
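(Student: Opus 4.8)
The plan is to run the argument exactly as in the proof of Proposition~\ref{Ofunddom}, reducing everything to a single application of Proposition~\ref{VcapVm} with $G=\BI$, $v_0=1$ and $r=5$. The decisive numerical input is the count $r|G|=5\cdot120=600=|\Pol_3|$, which is precisely the number of facets of the $600$-cell. Hence, once we know that the $\Delta_i$ are genuine facets lying in distinct $\BI$-orbits and that $\mathscr{D}$ is connected, Proposition~\ref{VcapVm} immediately yields that the five tetrahedra represent the five orbits and that $\mathscr{D}$ is a fundamental domain for the action on $\partial\Pol$.

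First I would record the easy hypotheses of Proposition~\ref{VcapVm}. Each $\Delta_i$ contains $1$ as a vertex, so $v_0=1\in\bigcap_i\vertices(\Delta_i)$; moreover every $\Delta_i$ contains the edge $[1,\sigma_k^-]$, so the union $\mathscr{D}$ is connected. Reading off the vertices gives
\[\vertices(\mathscr{D})=\{1,\sigma_i^+,\sigma_i^-,\sigma_j^+,\sigma_j^-,\sigma_k^+,\sigma_k^-\},\]
so that $V=\vertices(\mathscr{D})$, since $v_0=1$. The condition $V\cap V^{-1}=\{1\}$ is then a short check: for a unit quaternion, inversion is conjugation, which fixes the real part and negates the imaginary part. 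All six elements $\sigma_\bullet^\pm$ have real part $\varphi/2$ and nonnegative $i$-coordinate, while the four with strictly positive $i$-coordinate have inverses with negative $i$-coordinate, hence not in $V$; for the remaining two, $\sigma_j^\pm$, one compares the $j$- and $k$-coordinates directly and finds no coincidence. Thus $V\cap V^{-1}=\{1\}$, and the $\Delta_i$ lie in distinct $\BI$-orbits.

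The substantive step is to verify that each $\Delta_i$ is actually a facet of $\Pol=\conv(\BI)$, that is, of the regular $600$-cell. Conceptually this is transparent: the $\sigma_\bullet^\pm$ are precisely the vertices adjacent to $1$ at angle $\pi/5$ (they all have real part $\cos(\pi/5)=\varphi/2$), the vertex figure of the $600$-cell at $1$ is a regular icosahedron on these neighbours, and the five $\Delta_i$ are exactly the five tetrahedral facets meeting along the edge $[1,\sigma_k^-]$, equivalently the five triangles of the vertex-figure icosahedron incident to $\sigma_k^-$. Since every edge of the $600$-cell lies in exactly $5$ facets (because $600\cdot6=3600=5\cdot720$), this exhausts the facets through that edge. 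To make this rigorous in the style of Proposition~\ref{Ofunddom}, I would exhibit the supporting hyperplane of each $\Delta_i$ by taking its outward normal $v_i$ proportional to the centroid $\tfrac14(1+\sigma_k^-+\sigma_k^++\sigma_i^+)$ (and similarly for the others), normalized so that $\langle v_i,x\rangle=1$ on the four vertices, and then check $\langle v_i,g\rangle\le1$ for all $g\in\BI$ with equality only on $\vertices(\Delta_i)$.

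The hard part will be precisely this last verification. Unlike the octahedral case, the symmetry group of the $600$-cell is $H_4$ and does not act by signed permutations of coordinates, so the clean reduction to ``two inequalities'' available for $\BO$ is not at hand, and the golden-ratio coordinates make the $120$ scalar products $\langle v_i,g\rangle$ genuinely laborious to control by hand. I would therefore either invoke the regularity of the $600$-cell (all facets are congruent and lie in a single $H_4$-orbit, so it suffices to confirm that the $\Delta_i$ belong to this orbit) or, as the authors do elsewhere, dispatch the finite check by computer algebra. With the facet property in place, all hypotheses of Proposition~\ref{VcapVm} are satisfied and the statement follows.
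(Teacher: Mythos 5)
Your proposal follows the same skeleton as the paper's proof: verify that the five tetrahedra are facets of $\Pol$, read off $\vertices(\mathscr{D})$, check $\vertices(\mathscr{D})\cap\vertices(\mathscr{D})^{-1}=\{1\}$, and let Proposition \ref{VcapVm} (with $r|G|=5\cdot 120=600=|\Pol_3|$) do the rest; your coordinate verification of $V\cap V^{-1}=\{1\}$ is correct and in fact more detailed than what the paper records. The one substantive point where you diverge is the facet verification, and there your central claim is mistaken: you assert that, since the symmetry group of the $600$-cell is $H_4$, no reduction by signed permutations is available. But the paper's argument for $\BO$ transfers almost verbatim, because $\conv(\BI)$ is invariant under the group $\{\pm1\}^4\rtimes\Alt_4$ of order $192$ acting by \emph{even} signed permutations of coordinates: the three types of vertices of $\BI$ --- the $8$ unit quaternions, the $16$ points $\tfrac12(\pm1\pm i\pm j\pm k)$, and the $96$ even permutations of $\tfrac12(\pm\varphi,\pm1,\pm\varphi^{-1},0)$ --- are each stable under such transformations. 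This subgroup of $H_4$ is exactly what the paper uses: it exhibits a set $U$ of seven normal vectors whose $(\{\pm1\}^4\rtimes\Alt_4)$-orbit consists of $600$ distinct valid inequalities $\left<v,x\right>\le 1$ (orbit sizes $24+96+96+64+64+64+192=600$), so that only the seven inequalities for $v\in U$ need checking, and the facets together with their vertices are then read off from the equality cases. So the ``hard part'' you identify is handled by the same trick as in Proposition \ref{Ofunddom}, just with seven orbit representatives instead of two. Your fallback routes (invoking the regularity of the $600$-cell, or a direct computer check of the supporting hyperplanes with centroid normals) would also work and are not out of line with the paper, which itself quotes the face counts of the $600$-cell and delegates computations to Maple/GAP (Remark \ref{methO}); note, however, that the regularity route silently uses the identification of $\conv(\BI)$ with the regular $600$-cell, whereas the inequality method is self-contained modulo a finite computation. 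Note also a small slip in your conceptual discussion: the six elements $\sigma_\bullet^{\pm}$ are not ``precisely'' the vertices adjacent to $1$ --- there are twelve such neighbours (the vertex figure is an icosahedron) --- rather, they are $\sigma_k^-$ together with the five common neighbours of $1$ and $\sigma_k^-$.
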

\begin{proof}
We argue as in the proof of Proposition \ref{Ofunddom}. Let $\varphi:=(1+\sqrt{5})/2$. By invariance of $\Pol$, to verify that the following 600 inequalities 
\[\left<v,x\right>\le1,\]
with $v\in (\{\pm1\}^4\rtimes\Alt_4)\cdot U$ and
\[U:=\left\{\left(\begin{smallmatrix}4-2\varphi \\ 4-2\varphi \\ 0 \\ 0\end{smallmatrix}\right),\left(\begin{smallmatrix} 2-\varphi \\ 2-\frac{3}{\varphi} \\ 1 \\ 0\end{smallmatrix}\right),\left(\begin{smallmatrix} 2\varphi-3 \\ \frac{3}{\varphi}-1 \\ \varphi-1 \\ 0\end{smallmatrix}\right),\left(\begin{smallmatrix} 2\varphi-3 \\ 2\varphi-3 \\ 2\varphi-3 \\ 1\end{smallmatrix}\right),\left(\begin{smallmatrix} \varphi-1 \\ \varphi-1 \\ \varphi-1 \\ 2-\frac{3}{\varphi} \end{smallmatrix}\right),\left(\begin{smallmatrix} 2-\varphi \\ 2-\varphi \\ 2-\varphi \\ \frac{3}{\varphi}-1\end{smallmatrix}\right),\left(\begin{smallmatrix} 2\varphi-3 \\ 2-\varphi \\ \varphi-1 \\ 4-2\varphi \end{smallmatrix}\right)\right\},\]
are valid for $\Pol$, it is enough to check those for $v\in U$ and this is straightforward. Then, the facets are given by the equalities $\left<v,x\right>=1$ and we find their vertices:
\[\vertices(\mathscr{D})=\{1,\sigma_i^\pm,\sigma_j^\pm,\sigma_k^\pm\}\]
and since $\vertices(\mathscr{D})\cap\vertices(\mathscr{D})^{-1}=\{1\}$, the Proposition \ref{VcapVm} finishes the proof.
\end{proof}
\begin{center}
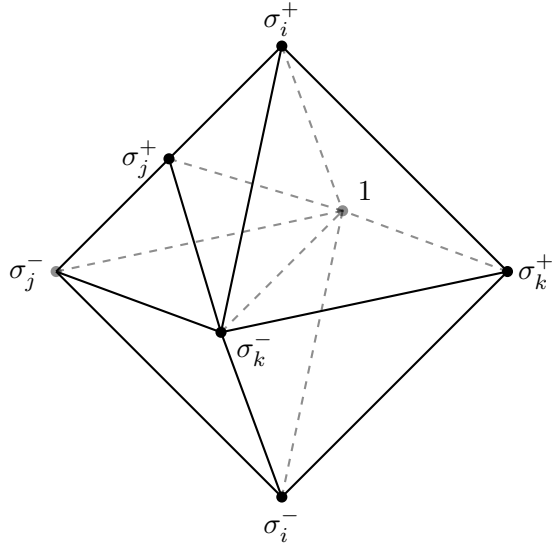
\begin{figure}[h!]
\begin{tikzpicture}[scale=3,thick]
  \coordinate (u) at (-1,0,0);
  \coordinate (s1) at (0,0,0.7);
  \coordinate (s2) at (0,0,-0.7);
  \coordinate (s3) at (0,-1,0);
  \coordinate (s4) at (-0.5,0.5,0);
  \coordinate (o1) at (1,0,0);
  \coordinate (o2) at (0,1,0);
  
  \draw (u)--(s1);
  \draw[dashed,opacity=0.45] (u)--(s2);
  \draw (u)--(s3);
  \draw (u)--(s4);
  \draw (s4)--(o2);
  \draw (s4)--(s1);
  \draw[dashed,opacity=0.45] (s4)--(s2);
  \draw[dashed,opacity=0.45] (o2)--(s2);
  \draw[dashed,opacity=0.45] (s2)--(s3);
  \draw (s3)--(o1);
  \draw (o2)--(o1);
  \draw[dashed,opacity=0.45] (o1)--(s2);
  \draw (o1)--(s1);
  \draw (s1)--(s3);
  \draw (o2)--(s1);
  \draw[dashed,opacity=0.45] (s1)--(s2);

  \fill[fill=black,opacity=0.45] (u) circle (0.7pt);
  \fill[fill=black] (s1) circle (0.7pt);
  \fill[fill=black,opacity=0.45] (s2) circle (0.7pt);
  \fill[fill=black] (s3) circle (0.7pt);
  \fill[fill=black] (s4) circle (0.7pt);
  \fill[fill=black] (o1) circle (0.7pt);
  \fill[fill=black] (o2) circle (0.7pt);
  
  \draw (0.15,0.05,0.7) node[below]{$\sigma_k^-$};
  \draw (0.1,0,-0.7) node[above]{$1$};
  \draw (s4) node[left]{$\sigma_j^+$};
  \draw (u) node[left]{$\sigma_j^-$};
  \draw (s3) node[below]{$\sigma_i^-$};
  \draw (o2) node[above]{$\sigma_i^+$};
  \draw (o1) node[right]{$\sigma_k^+$};
\end{tikzpicture}
\caption{The five tetrahedra inside $\mathscr{D}$.}
\end{figure}
\end{center}

\subsection{Associated $\BI$-cellular decomposition of $\partial\Pol$}
\hfill\\

Here also, we investigate the combinatorics of the polytopal fundamental domain $\mathscr{D}$ constructed above to obtain a cellular decomposition of it. This will give a cellular structure on $\partial\Pol$ and projecting to $\Sph^3$ gives the desired cellular structure.
\newline
\indent The facets of $\mathscr{D}$ are the ones of the five tetrahedra $\Delta_i$, except the ones that are contained in some intersection $\Delta_i\cap\Delta_j$. We obtain the following facets
\[\mathscr{D}_2=\{[1,\sigma_i^-,\sigma_k^+],[1,\sigma_k^+,\sigma_i^+],[1,\sigma_i^+,\sigma_j^+],[1,\sigma_j^+,\sigma_j^-],[1,\sigma_j^-,\sigma_i^-],\]
\[[\sigma_k^-,\sigma_i^-,\sigma_k^+],[\sigma_k^-,\sigma_k^+,\sigma_i^+],[\sigma_k^-,\sigma_i^+,\sigma_j^+],[\sigma_k^-,\sigma_j^+,\sigma_j^-],[\sigma_k^-,\sigma_j^-,\sigma_i^-]\}.\]
\newline
\indent We remark the following relations among them
\[\sigma_j^+\cdot[1,\sigma_i^-,\sigma_k^+]=[\sigma_j^+,\sigma_j^-,\sigma_k^-],~\sigma_j^-\cdot[1,\sigma_k^+,\sigma_i^+]=[\sigma_j^-,\sigma_i^-,\sigma_k^-],~\sigma_i^-\cdot[1,\sigma_i^+,\sigma_j^+]=[\sigma_i^-,\sigma_k^+,\sigma_k^-],\]
and
\[\sigma_k^+\cdot[1,\sigma_j^+,\sigma_j^-]=[\sigma_k^+,\sigma_i^+,\sigma_k^-],~\sigma_i^+\cdot[1,\sigma_j^-,\sigma_i^-]=[\sigma_i^+,\sigma_j^+,\sigma_k^-].\]
\newline
\indent These are the only relations linking facets, hence we may define the following 2-cells
\[e^2_1:=]1,\sigma_j^-,\sigma_i^-[,~e^2_2:=]1,\sigma_i^-,\sigma_k^+[,~e^2_3:=]1,\sigma_k^+,\sigma_i^+[,~e^2_4:=]1,\sigma_i^+,\sigma_j^+[,~e^2_5:=]1,\sigma_j^+,\sigma_j^-[.\]
\newline
\indent Now, define the following 1-cells
\[e^1_1:=]1,\sigma_k^+[,~~e^1_2:=]1,\sigma_i^+[,~~e^1_3:=]1,\sigma_j^+[,~~e^1_4:=]1,\sigma_j^-[,~~e^1_5:=]1,\sigma_i^-[.\]
\newline
\indent If we add to this the vertices of $\mathscr{D}$ and its interior, which is formed by only one cell $e^3$ by construction, then we may cover all of $\mathscr{D}$ with these cells and some of their translates. Thus, we have obtained the following result:
\begin{prop}\label{DPIcells}
Letting $E^0:=\{1\}$, $E^1:=\{e^1_i,~1\le i\le 5\}$, $E^2:=\{e^2_i,~1\le i \le 5\}$ and $E^3:=\{e^3\}$ with the above notations, we have the following $\BI$-equivariant cellular decomposition of the sphere
\[\Sph^3=\coprod_{\substack{0 \le j \le 3 \\ e\in E^j,g\in\BI}}g\phi(e),\]
where $p : \partial\Pol \stackrel{\tiny{\sim}}\to\Sph^3$ is the $\BI$-homeomorphism given by projection.
\end{prop}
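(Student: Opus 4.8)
The plan is to transcribe the octahedral argument (Lemma \ref{ODcells} together with Proposition \ref{DPOcells}): first produce an explicit cell decomposition of the polytopal fundamental domain $\mathscr{D}$, then spread it over $\partial\Pol$ by the free $\BI$-action, and finally carry it to $\Sph^3$ through the equivariant radial homeomorphism $x\mapsto x/|x|$ of Theorem \ref{thm6.4}(iii). Since Proposition \ref{Ifunddom} already gives that $\mathscr{D}$ is a fundamental domain, everything that remains is combinatorial.

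First I would record the combinatorial type of $\mathscr{D}$. All five tetrahedra $\Delta_1,\dots,\Delta_5$ share the edge $[1,\sigma_k^-]$, so $\mathscr{D}$ is the join of this edge with the equatorial pentagon $\sigma_k^+,\sigma_i^+,\sigma_j^+,\sigma_j^-,\sigma_i^-$ (the cyclic order being the one read off from the listed $2$-cells), i.e.\ a pentagonal bipyramid with apexes $1$ and $\sigma_k^-$. Its boundary is a triangulated $2$-sphere built from the seven vertices of $\vertices(\mathscr{D})$, fifteen edges (five ``top'' edges $[1,e]$, five ``bottom'' edges $[\sigma_k^-,e]$, five equatorial edges $[e,e']$) and the ten facets of $\mathscr{D}_2$, consistent with $7-15+10=2$. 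The decisive geometric point is that the axis $[1,\sigma_k^-]$ and the five internal triangles $[1,\sigma_k^-,e]$ lie in the \emph{interior} of $\mathscr{D}$; they are thus absorbed into the single open $3$-cell $e^3=\,]\mathscr{D}[$ and contribute no lower cell. Comparing with the face vector $(120,720,1200,600)$ of the $600$-cell, whose free $\BI$-orbits number $(1,6,10,5)$, this absorption accounts exactly for the discrepancy: one edge orbit (the axis) and five face orbits (the internal triangles) disappear, leaving the orbit counts $(1,5,5,1)$ of the CW-structure.

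Next I would pin down the orbit representatives and verify they are pairwise inequivalent and complete. Each of the five displayed relations realizes a bottom triangle $[\sigma_k^-,\cdot,\cdot]$ as a $\BI$-translate of a top triangle $[1,\cdot,\cdot]$, so the five front triangles $e^2_1,\dots,e^2_5$ exhaust the boundary $2$-faces, and restricting those relations to edges matches every bottom and equatorial edge with a top edge, so $e^1_1,\dots,e^1_5$ exhaust the boundary edges. Inequivalence is clean and uniform: each $e^1_i$ and each $e^2_i$ contains the vertex $1$ and has all its vertices in $\vertices(\mathscr{D})$, so if some $g\in\BI$ carried one representative onto another then $g=g\cdot 1\in\vertices(\mathscr{D})$ and $g^{-1}=g^{-1}\cdot 1\in\vertices(\mathscr{D})$, whence $g\in\vertices(\mathscr{D})\cap\vertices(\mathscr{D})^{-1}=\{1\}$ by Proposition \ref{Ifunddom}. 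The vertices all lie in the single orbit $\BI\cdot 1=\vertices(\Pol)$, represented by $E^0=\{1\}$, and the interior is the lone $3$-cell $e^3$.

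Putting this together, the cells of $E^0\cup E^1\cup E^2\cup E^3$ and their $\BI$-translates inside $\mathscr{D}$ partition $\mathscr{D}$; since $\mathscr{D}$ is a fundamental domain, translating by all of $\BI$ gives a $\BI$-equivariant partition of $\partial\Pol$ into open cells (disjointness of the open cells across translates being part of the fundamental-domain property of Theorem \ref{thm6.4}), and composing with the equivariant homeomorphism $\partial\Pol\to\Sph^3$ of Theorem \ref{thm6.4}(iii) yields the asserted decomposition. The hard part will be the finite combinatorial bookkeeping hidden in the middle steps: checking that the five relations are the \emph{complete} list of facet identifications and that they induce the stated edge identifications. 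This is a finite computation in the quaternions $\BI$, most safely discharged with the computer-algebra tools of Remark \ref{methO}; the one conceptual subtlety is to notice the absorption of the axis and the internal triangles into $e^3$, which is exactly what makes the CW-structure coarser than the simplicial structure of $\Pol$.
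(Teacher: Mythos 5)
Your proposal is correct and takes essentially the same route as the paper: the paper presents this proposition as the direct outcome of the construction in the surrounding subsection (listing the ten boundary facets of $\mathscr{D}$, pairing them via the five stated relations, absorbing the axial edge and internal triangles into the single $3$-cell, and transporting everything through the radial $\BI$-homeomorphism), with no further proof given. Your additional bookkeeping --- the pentagonal-bipyramid description, the comparison with the face numbers $(120,720,1200,600)$ of the $600$-cell, and the inequivalence argument via $\vertices(\mathscr{D})\cap\vertices(\mathscr{D})^{-1}=\{1\}$ --- is exactly the mechanism of Proposition \ref{VcapVm} and Remark \ref{methO}, and it correctly fills in the details the paper leaves implicit (note only that disjointness of lower-dimensional cells across translates rests on the freeness of the action on faces from Theorem \ref{thm6.4} together with your inequivalence argument, not on the fundamental-domain property alone).
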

\indent The 1-skeleton of $\mathscr{D}$ is displayed in figure \ref{1skelI}.
\begin{center}
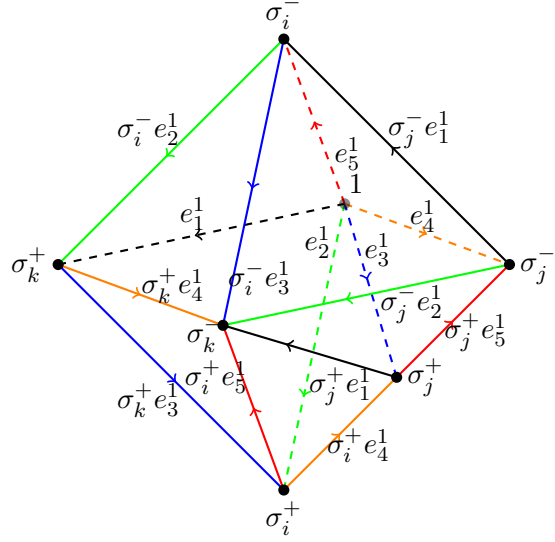
\begin{figure}[h!]
\begin{tikzpicture}[scale=3,thick]
  \coordinate (s3) at (-1,0,0);
  \coordinate (s2) at (0,0,-0.7);
  \coordinate (s1) at (0,0,0.7);
  \coordinate (u) at (0,-1,0);
  \coordinate (s4) at (0.5,-0.5,0);
  \coordinate (o1) at (0,1,0);
  \coordinate (o2) at (1,0,0);
  
  \draw[decoration={markings, mark=at position 0.5 with {\arrow{>}}},postaction={decorate},color=red] (u)--(s1);
  \draw[decoration={markings, mark=at position 0.35 with {\arrow{<}}},postaction={decorate},color=green,dashed] (u)--(s2);
  \draw[decoration={markings, mark=at position 0.5 with {\arrow{<}}},postaction={decorate},color=blue] (u)--(s3);
  \draw[decoration={markings, mark=at position 0.5 with {\arrow{>}}},postaction={decorate},color=orange] (u)--(s4);
  \draw[decoration={markings, mark=at position 0.5 with {\arrow{>}}},postaction={decorate},color=red] (s4)--(o2);
  \draw[decoration={markings, mark=at position 0.4 with {\arrow{<}}},postaction={decorate}] (s1)--(s4);
  \draw[decoration={markings, mark=at position 0.6 with {\arrow{<}}},postaction={decorate},color=blue,dashed] (s4)--(s2);
  \draw[decoration={markings, mark=at position 0.5 with {\arrow{>}}},postaction={decorate},color=orange,dashed] (s2)--(o2);
  \draw[decoration={markings, mark=at position 0.5 with {\arrow{<}}},postaction={decorate},dashed] (s3)--(s2);
  \draw[decoration={markings, mark=at position 0.5 with {\arrow{>}}},postaction={decorate},color=orange] (s3)--(s1);
  \draw[decoration={markings, mark=at position 0.5 with {\arrow{<}}},postaction={decorate}] (o1)--(o2);
  \draw[decoration={markings, mark=at position 0.5 with {\arrow{>}}},postaction={decorate},color=red,dashed] (s2)--(o1);
  \draw[decoration={markings, mark=at position 0.5 with {\arrow{<}}},postaction={decorate},color=blue] (s1)--(o1);
  \draw[decoration={markings, mark=at position 0.5 with {\arrow{<}}},postaction={decorate},color=green] (s3)--(o1);
  \draw[decoration={markings, mark=at position 0.45 with {\arrow{<}}},postaction={decorate},color=green] (s1)--(o2);

  \fill[fill=black] (u) circle (0.7pt);
  \fill[fill=black] (s1) circle (0.7pt);
  \fill[fill=black,opacity=0.45] (s2) circle (0.7pt);
  \fill[fill=black] (s3) circle (0.7pt);
  \fill[fill=black] (s4) circle (0.7pt);
  \fill[fill=black] (o1) circle (0.7pt);
  \fill[fill=black] (o2) circle (0.7pt);
  
  \draw (-0.08,-0.04,0.7) node{$\sigma_k^-$};
  \draw (0.05,0,-0.7) node[above]{$1$};
  \draw (s4) node[right]{$\sigma_j^+$};
  \draw (u) node[below]{$\sigma_i^+$};
  \draw (s3) node[left]{$\sigma_k^+$};
  \draw (o2) node[right]{$\sigma_j^-$};
  \draw (o1) node[above]{$\sigma_i^-$};
  
  \draw (0.6,0.6,0) node{$\sigma_j^-e^1_1$};
  \draw (0.33,-0.8,0) node{$\sigma_i^+e^1_4$};
  \draw (0.15,0.1,0) node{$e^1_2$};
  \draw (-0.3,-0.5,0) node{$\sigma_i^+e^1_5$};
  \draw (-0.6,-0.6,0) node{$\sigma_k^+e^1_3$};
  \draw (0.52,-0.27,0.7) node{${\sigma_j^+}e^1_1$};
  \draw (0.72,-0.02,0.4) node{$\sigma_j^- e^1_2$};
  \draw (0.30,-0.05,-0.3) node{$e^1_3$};
  \draw (0.85,-0.3,0) node{$\sigma_j^+ e^1_5$};
  \draw (0.5,0.1,-0.3) node{$e^1_4$};
  \draw (0.09,0.3,-0.5) node{$e^1_5$};
  \draw (-0.3,0.1,0.5) node{$\sigma_k^+ e^1_4$};
  \draw (-0.5,0.13,-0.25) node{$e^1_1$};
  \draw (-0.6,0.6,0) node{$\sigma_i^- e^1_2$};
  \draw (-0.05,0,0.15) node{$\sigma_i^-e^1_3$};
\end{tikzpicture}
\caption{The oriented 1-skeleton of $\mathscr{D}$.}
\label{1skelI}
\end{figure}
\end{center}
\indent We now have to compute the boundaries of the cells and the resulting cellular homology chain complex. We choose to orient the 3-cell $e^3$ undirectly, and the 2-cells directly.

\begin{center}
\begin{figure}[h!]
\begin{tikzpicture}[scale=3]
  \coordinate (s3) at (-1,0,0);
  \coordinate (s2) at (0,0,-0.7);
  \coordinate (s1) at (0,0,0.7);
  \coordinate (u) at (0,-1,0);
  \coordinate (s4) at (0.5,-0.5,0);
  \coordinate (o1) at (0,1,0);
  \coordinate (o2) at (1,0,0);
  
  \draw[thick] (u)--(s1);
  \draw[dashed,opacity=0.45] (u)--(s2);
  \draw[thick] (u)--(s3);
  \draw[thick] (u)--(s4);
  \draw[thick] (s4)--(o2);
  \draw[thick] (s1)--(s4);
  \draw[dashed,opacity=0.45] (s4)--(s2);
  \draw[dashed,opacity=0.45] (s2)--(o2);
  \draw[dashed,opacity=0.45] (s3)--(s2);
  \draw[thick] (s3)--(s1);
  \draw[thick] (o1)--(o2);
  \draw[dashed,opacity=0.45] (s2)--(o1);
  \draw[thick] (s1)--(o1);
  \draw[thick] (s3)--(o1);
  \draw[thick] (s1)--(o2);
  
  \draw[pattern=north west lines,pattern color=red,opacity=0.5] (s2)--(s3)--(u);
  \draw[pattern=north west lines,pattern color=green,opacity=0.5] (s2)--(s4)--(o2);
  \draw[pattern=north west lines,pattern color=blue,opacity=0.5] (s2)--(s4)--(u);
  \draw[pattern=north west lines,pattern color=orange,opacity=0.5] (s2)--(o1)--(o2);
  \draw[pattern=north west lines,pattern color=black,opacity=0.5] (s2)--(s3)--(o1);

  \fill[fill=black] (u) circle (0.7pt);
  \fill[fill=black] (s1) circle (0.7pt);
  \fill[fill=black,opacity=0.45] (s2) circle (0.7pt);
  \fill[fill=black] (s3) circle (0.7pt);
  \fill[fill=black] (s4) circle (0.7pt);
  \fill[fill=black] (o1) circle (0.7pt);
  \fill[fill=black] (o2) circle (0.7pt);
  
  \draw (0.45,0.35,0) node{$e^2_1$};
  \draw[decoration={markings, mark=at position 0.4 with {\arrow{>}}},postaction={decorate}] (0.45,0.35,0) ellipse (0.15 and 0.1);
  \draw (0.6,-0.16,0) node{$e^2_5$};
  \draw[decoration={markings, mark=at position 0.4 with {\arrow{>}}},postaction={decorate}] (0.6,-0.16,0) ellipse (0.13 and 0.17);
  \draw (0.3,-0.35,0) node{$e^2_4$};
  \draw[decoration={markings, mark=at position 0.4 with {\arrow{>}}},postaction={decorate}] (0.3,-0.35,0) ellipse (0.1 and 0.15);
  \draw (-0.07,-0.07,0) node{$e^2_3$};
  \draw[decoration={markings, mark=at position 0.4 with {\arrow{>}}},postaction={decorate}] (-0.07,-0.07,0) ellipse (0.13 and 0.15);
  \draw (-0.26,0.37,0) node{$e^2_2$};
  \draw[decoration={markings, mark=at position 0.3 with {\arrow{>}}},postaction={decorate}] (-0.26,0.37,0) ellipse (0.2 and 0.15);
  
\end{tikzpicture}~~~~~~\begin{tikzpicture}[scale=3,thick]
  \coordinate (s3) at (-1,0,0);
  \coordinate (s2) at (0,0,-0.7);
  \coordinate (s1) at (0,0,0.7);
  \coordinate (u) at (0,-1,0);
  \coordinate (s4) at (0.5,-0.5,0);
  \coordinate (o1) at (0,1,0);
  \coordinate (o2) at (1,0,0);
  
  \draw (u)--(s1);
  \draw (u)--(s3);
  \draw (u)--(s4);
  \draw (s4)--(o2);
  \draw (s1)--(s4);
  \draw (s3)--(s1);
  \draw (o1)--(o2);
  \draw (s1)--(o1);
  \draw (s3)--(o1);
  \draw (s1)--(o2);
  
  \fill[fill=red,opacity=0.7] (s1)--(o1)--(o2);
  \fill[fill=green,opacity=0.7] (u)--(s1)--(s3);
  \fill[fill=blue,opacity=0.7] (s1)--(s3)--(o1);
  \fill[fill=orange,opacity=0.7] (u)--(s1)--(s4);
  \fill[fill=black,opacity=0.7] (s1)--(s4)--(o2);

  \fill[fill=black] (u) circle (0.7pt);
  \fill[fill=black] (s1) circle (0.7pt);
  \fill[fill=black] (s3) circle (0.7pt);
  \fill[fill=black] (s4) circle (0.7pt);
  \fill[fill=black] (o1) circle (0.7pt);
  \fill[fill=black] (o2) circle (0.7pt);
  
  \draw (0.25,0.25,0) node{$\sigma_j^- e^2_3$};
  \draw[decoration={markings, mark=at position 0.4 with {\arrow{>}}},postaction={decorate}] (0.25,0.25,0) ellipse (0.21 and 0.2);
  \draw (0.43,-0.3,0) node{$\sigma_j^+e^2_2$};
  \draw[decoration={markings, mark=at position 0.4 with {\arrow{>}}},postaction={decorate}] (0.43,-0.3,0) ellipse (0.19 and 0.12);
  \draw (0.1,-0.6,0) node{$\sigma_i^+e^2_1$};
  \draw[decoration={markings, mark=at position 0.4 with {\arrow{>}}},postaction={decorate}] (0.1,-0.6,0) ellipse (0.2 and 0.12);
  \draw (-0.4,-0.4,0) node{$\sigma_k^+e^2_5$};
  \draw[decoration={markings, mark=at position 0.4 with {\arrow{>}}},postaction={decorate}] (-0.4,-0.4,0) ellipse (0.15 and 0.12);
  \draw (-0.5,0.15,0) node{$\sigma_i^-e^2_4$};
  \draw[decoration={markings, mark=at position 0.4 with {\arrow{>}}},postaction={decorate}] (-0.5,0.15,0) ellipse (0.2 and 0.17);
\end{tikzpicture}
\caption{The oriented 2-skeleton of $\mathscr{D}$ (back and front).}
\end{figure}
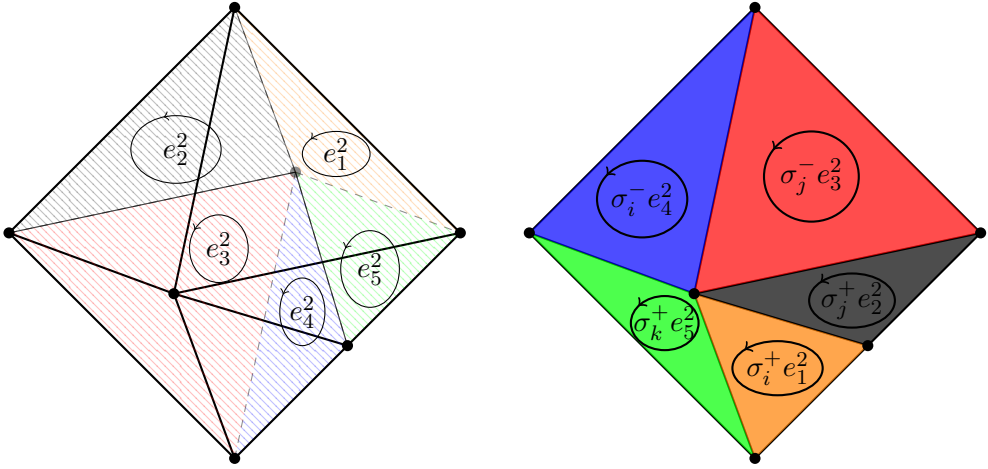
\end{center}
These orientations allow us to easily compute the boundaries of the representing cells $e^u_v$ and give the resulting chain complex of free left $\Z[\BI]$-modules
\begin{prop}\label{Ichain}
The cellular homology complex of $\partial\Pol$ associated to the cellular structure given in Proposition \ref{DPIcells} is the chain complex of free left $\Z[\BI]$-modules
\[\mathcal{K}_\BI:=\left(\xymatrix{\Z[\BI] \ar^{\partial_3}[r] & \Z[\BI]^5 \ar^{\partial_2}[r] & \Z[\BI]^5 \ar^{\partial_1}[r] & \Z[\BI]}\right),\]
where
\[\partial_1=\begin{pmatrix}\sigma_k^+-1 \\ \sigma_i^+-1 \\ \sigma_j^+-1 \\ \sigma_j^--1 \\ \sigma_i^--1\end{pmatrix},~~\partial_2=\begin{pmatrix}\sigma_j^- & 0 & 0 & 1 & -1 \\ -1 & \sigma_i^- & 0 & 0 & 1 \\ 1 & -1 & \sigma_k^+ & 0 & 0 \\ 0 & 1 & -1 & \sigma_i^+ & 0 \\ 0 & 0 & 1 & -1 & \sigma_j^+\end{pmatrix},\]
\[\partial_3=\begin{pmatrix}\sigma_i^+-1 & \sigma_j^+-1 & \sigma_j^--1 & \sigma_i^--1 & \sigma_k^+-1\end{pmatrix}.\]
\end{prop}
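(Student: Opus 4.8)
The plan is to read off the three boundary maps directly from the oriented cellular decomposition of the fundamental domain $\mathscr{D}$ recorded in Proposition \ref{DPIcells} and the accompanying figures, expressing the boundary of each orbit representative as a $\Z[\BI]$-linear combination of representatives one dimension lower. Throughout I use the convention that the $i$-th row of a matrix encodes the boundary of the $i$-th generator written in the target basis; in particular $\partial_1$ is the column listing the elements $v_i-1$ attached to the edges. For $\partial_1$ this is immediate: the only $0$-cell representative is $1$, every vertex of $\mathscr{D}$ lies in the orbit $\BI\cdot 1$, and each edge $e^1_i=\;]1,v_i[$ has boundary $v_i\cdot\{1\}-\{1\}=(v_i-1)e^0$, where $v_i$ ranges over $\sigma_k^+,\sigma_i^+,\sigma_j^+,\sigma_j^-,\sigma_i^-$.

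For $\partial_2$, each $2$-cell $e^2_j=\;]1,a,b[$ is a triangle two of whose edges, $]1,a[$ and $]1,b[$, already appear among the $e^1_i$ and so contribute with signs $\pm1$ fixed by the chosen (direct) orientation. The remaining ``far'' edge $]a,b[$ carries no vertex in representative position, so I factor it as $a\cdot\;]1,a^{-1}b[$ and identify $a^{-1}b$ with the appropriate generator $v_i$; this is exactly where a short quaternionic computation is required (for instance $\sigma_j^-\sigma_k^+=\sigma_i^-$, as visible in Figure \ref{1skelI}), and it produces the single entry with a nontrivial group coefficient in each row. Running through the five triangles and matching the edge orientations of Figure \ref{1skelI} assembles $\partial_2$.

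For $\partial_3$, the $3$-cell $e^3$ is the interior of $\mathscr{D}=\bigcup_{m=1}^5\Delta_m$, where all five tetrahedra share the polar edge $[1,\sigma_k^-]$. Consequently the only faces surviving on $\partial\mathscr{D}$ are, for each $\Delta_m=[1,\sigma_k^-,a,b]$, the bottom face $[1,a,b]$ (one of the $e^2_j$) and the top face $[\sigma_k^-,a,b]$, while the two faces through $[1,\sigma_k^-]$ are internal and cancel. The five relations displayed just before the definition of the $e^2_j$ identify each top face with a $\BI$-translate of a bottom face; for example $\sigma_i^+\cdot[1,\sigma_j^-,\sigma_i^-]=[\sigma_i^+,\sigma_j^+,\sigma_k^-]$ exhibits the top of $\Delta_2$ as $\sigma_i^+ e^2_1$. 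Since $e^3$ is oriented undirectly, the bottom and top of each tetrahedron enter with opposite signs, so each representative $e^2_j$ occurs once as a bottom (coefficient $-1$) and once as a translate $\sigma\,e^2_j$ of a top (coefficient $+\sigma$), yielding the entries $\sigma-1$ of $\partial_3$.

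The hard part is not any single step but the consistent bookkeeping of orientations together with the group identities: one must ensure that the signs coming from the simplicial boundary formula, the direct orientation of the $e^2_j$, and the undirect orientation of $e^3$ all agree, and that each far-edge factorization $a^{-1}b=v_i$ and each top-face identification are the correct quaternionic equalities. Once the three matrices are written down, I would verify the consistency relations $\partial_1\partial_2=0$ and $\partial_2\partial_3=0$ in $\Z[\BI]$; this both confirms the sign conventions and guarantees that $\mathcal{K}_\BI$ is genuinely a chain complex.
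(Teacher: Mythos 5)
Your proposal is correct and coincides with the paper's own (largely implicit) argument: the paper likewise obtains $\partial_1$, $\partial_2$, $\partial_3$ by direct computation of the boundaries of the representative cells from the oriented $1$- and $2$-skeleta of $\mathscr{D}$, using exactly the far-edge factorizations (e.g.\ $\sigma_j^-\sigma_k^+=\sigma_i^-$) and the five top-face identifications $\sigma\cdot[1,a,b]=[\sigma,\ldots,\sigma_k^-]$ that you invoke. Your added verification that the compositions of consecutive differentials vanish in $\Z[\BI]$ is a sound consistency check (noting only that, in the row-vector convention for left modules, these are the matrix products $\partial_2\partial_1$ and $\partial_3\partial_2$).
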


\subsection{The case of spheres and free resolution of the trivial $\BI$-module}
\hfill\\

Here again, we shall describe the fundamental domain obtained above in $\Sph^3$ in terms of curved join and give a fundamental domain on $\Sph^{4n-1}$ and the equivariant cellular structure on that goes with it. We finish by giving a 4-periodic free resolution of $\Z$ over $\Z[\BI]$.

\begin{theo}\label{SPHIcells}
The following subset of $\Sph^3$ is a fundamental domain for the action of $\BI$
\begin{align*}
\mathscr{F}_{3}:=&(1\ast\sigma_k^-\ast\sigma_i^+\ast\sigma_j^+) \cup (1\ast\sigma_k^-\ast\sigma_j^+\ast\sigma_j^-) \cup (1\ast\sigma_k^-\ast\sigma_j^-\ast\sigma_i^-) \\
&\cup (1\ast\sigma_k^-\ast\sigma_i^-\ast\sigma_k^+) \cup (1\ast\sigma_k^-\ast\sigma_k^+\ast\sigma_i^+).
\end{align*}

Therefore, the sphere $\Sph^3$ admits a $\BI$-equivariant cellular decomposition with the following cells as orbit representatives
\[\widetilde{e}^0:=1\ast\emptyset=\{1\},\]
\[\widetilde{e}^1_1:=\mathrm{relint}(1\ast\sigma_k^+),~\widetilde{e}^1_2:=\mathrm{relint}(1\ast\sigma_i^+),~\widetilde{e}^1_3:=\mathrm{relint}(1\ast\sigma_j^+),~\widetilde{e}^1_4:=\mathrm{relint}(1\ast\sigma_j^-),~\widetilde{e}^1_5:=\mathrm{relint}(1\ast\sigma_i^-),\]
\[\widetilde{e}^2_1:=\mathrm{relint}(1\ast\sigma_j^-\ast\sigma_i^-),~\widetilde{e}^2_2:=\mathrm{relint}(1\ast\sigma_i^-\ast\sigma_k^+),~\widetilde{e}^2_3:=\mathrm{relint}(1\ast\sigma_k^+\ast\sigma_i^+),\]
\[\widetilde{e}^2_4:=\mathrm{relint}(1\ast\sigma_i^+\ast\sigma_j^+),~\widetilde{e}^2_5:=\mathrm{relint}(1\ast\sigma_j^+\ast\sigma_j^-),~\widetilde{e}^3:=\interior{\mathscr{F}}_{3}.\]

Furthermore, the associated cellular homology complex is the chain complex $\mathcal{K}_\BI$ from the Proposition \ref{Ichain}.
\end{theo}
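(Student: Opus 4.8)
The plan is to transport each of the three assertions from the boundary of the orbit polytope to the sphere by means of the $\BI$-equivariant radial projection, so that essentially no new computation is required beyond what Propositions \ref{Ifunddom}, \ref{DPIcells} and \ref{Ichain} already supply.

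First I would establish the fundamental domain. By Proposition \ref{Ifunddom} the polytopal complex $\mathscr{D}=\bigcup_{i=1}^5\Delta_i$ is a fundamental domain for $\BI$ on $\partial\Pol$, and Theorem \ref{thm6.4}(iii) furnishes a $\BI$-equivariant homeomorphism $\mathrm{pr}:\partial\Pol\to\Sph^3$, $x\mapsto x/|x|$. Since an equivariant homeomorphism carries fundamental domains to fundamental domains, $\mathrm{pr}(\mathscr{D})$ is a fundamental domain for $\BI$ on $\Sph^3$. The key identification is that, directly from the definition of the curved join as the radial projection of a convex hull, each projected tetrahedron $\mathrm{pr}(\Delta_i)=\mathrm{pr}(\mathrm{conv}(v_1,v_2,v_3,v_4))$ equals the curved join $v_1\ast v_2\ast v_3\ast v_4$. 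Matching the five tetrahedra $\Delta_1,\dotsc,\Delta_5$ against the five curved simplices listed in $\mathscr{F}_{3}$ then yields $\mathscr{F}_{3}=\mathrm{pr}(\mathscr{D})$, which is the first claim. Because $0$ is interior to $\Pol$ by Theorem \ref{thm6.4}(iii), no facet contains an antipodal pair of vertices, so the relevant curved joins are well defined.

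Next I would address the cellular decomposition. Proposition \ref{DPIcells} already records a $\BI$-equivariant cellular structure on $\Sph^3$ with orbit representatives $E^0,E^1,E^2,E^3$, obtained by pushing the open cells of $\mathscr{D}$ forward along $\mathrm{pr}$. It then remains only to rewrite each representing cell as the relative interior of a curved join: the open polytopal $1$-cells $]1,\sigma[$ and $2$-cells $]1,\sigma,\sigma'[$ of $\mathscr{D}$ are sent by $\mathrm{pr}$ to the Riemannian relative interiors of the corresponding curved joins $1\ast\sigma$ and $1\ast\sigma\ast\sigma'$, exactly as tabulated. This is again immediate from the definition of the curved join, the passage to relative interiors being justified by the remark following Theorem \ref{SPHOcells} that $\mathrm{pr}$ sends relative interiors of polytopal simplices to Riemannian relative interiors of geodesic simplices.

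Finally I would deduce the chain complex. As $\mathrm{pr}$ is a $\BI$-equivariant cellular homeomorphism, it induces an isomorphism of cellular chain complexes of free $\Z[\BI]$-modules, so the complex on $\Sph^3$ coincides with the one computed on $\partial\Pol$, namely $\mathcal{K}_\BI$ of Proposition \ref{Ichain}. The one point demanding care, and the main if modest obstacle, is orientation bookkeeping: I must check that the orientations chosen for the curved cells on $\Sph^3$ (the $2$-cells oriented directly, the top cell $\widetilde{e}^3=\interior{\mathscr{F}}_{3}$ oriented undirectly, as in Proposition \ref{Ichain}) are the $\mathrm{pr}$-images of the orientations fixed on $\partial\Pol$, so that the incidence numbers, and hence the boundary matrices $\partial_1,\partial_2,\partial_3$, are reproduced verbatim rather than merely up to sign. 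Since $\mathrm{pr}$ is orientation-coherent this holds, completing the argument.
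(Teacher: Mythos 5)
Your proposal is correct and follows essentially the same route as the paper, which leaves this theorem without a separate proof precisely because it is the transport, via the $\BI$-equivariant radial projection of Theorem \ref{thm6.4}(iii), of Proposition \ref{Ifunddom} (fundamental domain), Proposition \ref{DPIcells} (cell structure) and Proposition \ref{Ichain} (chain complex), with projected tetrahedra rewritten as curved joins exactly as you do. Your added justifications — that no facet contains antipodal vertices since $0$ is interior to $\Pol$, and that orientations are carried coherently so the boundary matrices agree verbatim — are correct fillings of details the paper leaves implicit.
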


\begin{rem}\label{poincare}
Using the augmentation map $\varepsilon : \Z[\BI] \twoheadrightarrow \Z$, we can compute the complex $\mathcal{K}_\BI\otimes_{\Z[\BI]}\Z$ and since we have
\[\det(\partial_2\otimes\Z)=\det\left(\begin{smallmatrix}1 & 0 & 0 & 1 & -1 \\ -1 & 1 & 0 & 0 & 1 \\ 1 & -1 & 1 & 0 & 0 \\ 0 & 1 & -1 & 1 & 0 \\ 0 & 0 & 1 & -1 & 1\end{smallmatrix}\right)=1,\]
we find that $\Sph^3/\BI$ is a homology sphere, but it is not a sphere. That is, one has $H_{\ast}(\Sph^3/\BI,\Z)=H_{\ast}(\Sph^3,\Z)$, and however $\Sph^3/\BI$ is not homeomorphic to $\Sph^3$, since $\pi_1(\Sph^3/\BI)=\BI\ne1=\pi_1(\Sph^3)$.
\newline
\indent This space has a long story, it is called the \emph{Poincar\'e homology sphere}. It can also be constructed as the link of the simple singularity of type $E_8$ of the complex affine variety $\{(x,y,z)\in\C^3~;~x^2+y^3+z^5=0\}$ near the origin, as the \emph{Seifert bundle} or as the \emph{dodecahedral space}. This last one corresponds to the original construction of Poincar\'e. For a detailed expository paper on the Poincar\'e homology sphere, we refer the reader to \cite{eight_faces_poincare}.
\end{rem}

\begin{cor}\label{resolforI}
The following complex is a 4-periodic resolution of $\Z$ over $\Z[\BI]$
\[\xymatrix{\dotsc \ar[r] & \Z[\BI]^5 \ar^{\partial_{4q-3}}[r] & \Z[\BI] \ar^{\partial_{4q-4}}[r] & \dotsc\ar[r] & \Z[\BI]^5 \ar^{\partial_2}[r] & \Z[\BI]^5 \ar^<<<<<{\partial_1}[r] & \Z[\BI] \ar^<<<<<{\varepsilon}[r] & \Z \ar[r] & 0}.\]
\end{cor}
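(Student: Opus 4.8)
The plan is to deduce this exactly as Corollary \ref{resolforO} was obtained from Theorem \ref{SPHOcells}: splice the finite complex $\mathcal{K}_\BI$ of Proposition \ref{Ichain} to itself using a norm map. By Theorem \ref{SPHIcells}, $\mathcal{K}_\BI$ is the cellular chain complex of $\Sph^3$ for the free cellular $\BI$-action coming from the equivariant decomposition, so it is a complex of free $\Z[\BI]$-modules computing the ordinary homology of $\Sph^3$. First I would use that $\Sph^3$ is $2$-connected with $H_3(\Sph^3,\Z)=\Z$ to read off the homology of $\mathcal{K}_\BI$: augmenting by $\varepsilon$, the sequence
\[\Z[\BI]\xrightarrow{\partial_3}\Z[\BI]^5\xrightarrow{\partial_2}\Z[\BI]^5\xrightarrow{\partial_1}\Z[\BI]\xrightarrow{\varepsilon}\Z\to 0\]
is exact at every spot except the leftmost $\Z[\BI]$, where $\ker\partial_3\cong H_3(\Sph^3,\Z)=\Z$. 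In particular $\ima\partial_1=\ker\varepsilon$ (since $\coker\partial_1=H_0=\Z$), $\ker\partial_1=\ima\partial_2$, and $\ker\partial_2=\ima\partial_3$.

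Next I would make the periodic splice explicit. Write $\sigma:=\sum_{g\in\BI}g$ for the norm element and let $\partial_{4q}$ be right multiplication by $\sigma$, a left $\Z[\BI]$-linear endomorphism of $\Z[\BI]$. Since $\sigma g=\sigma$ for all $g$, its image is $\Z\sigma$, and I would identify this with $\ker\partial_3$: an element $a$ lies in $\ker\partial_3$ iff $ag=a$ for the five vertex-elements occurring in $\partial_3$, which generate $\BI$, forcing all coefficients of $a$ to coincide, i.e.\ $\ker\partial_3=\Z\sigma$. Equivalently, $\Z\sigma$ is the image of the fundamental class of the closed oriented $3$-manifold $\mathsf{P}^3_\BI$, so this identification is already forced by $H_3(\Sph^3,\Z)=\Z$. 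On the other hand $x\sigma=\varepsilon(x)\sigma$, so $\ker\partial_{4q}=\ker\varepsilon=\ima\partial_1$. Hence $\ima\partial_{4q}=\ker\partial_{4q-1}$ and $\ker\partial_{4q}=\ima\partial_{4q-3}$, which is exactly what exactness at the two copies of $\Z[\BI]$ adjacent to each norm map demands.

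Finally, repeating the four maps $\partial_1,\partial_2,\partial_3,\partial_4$ periodically, exactness at every module reduces to the identities verified above — one per residue class modulo $4$, together with the $\varepsilon$-spot — so the resulting unbounded complex is a $4$-periodic free resolution of $\Z$ over $\Z[\BI]$. The only step requiring genuine care, and thus the main obstacle, is the identification $\ker\partial_3=\Z\sigma$ and, dually, $\ima\partial_1=\ker\varepsilon$; both are handed to us by the topology (the vanishing of $\widetilde{H}_i(\Sph^3)$ for $i<3$ and $H_3(\Sph^3)=\Z$ via Theorem \ref{SPHIcells}), so no computation beyond Proposition \ref{Ichain} is needed. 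This is word for word the argument of Corollary \ref{resolforO}, with $5$ in place of $3$ and $\BI$ in place of $\BO$.
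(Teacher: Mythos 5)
Your proposal is correct and is essentially the paper's own (largely implicit) argument: the paper presents Corollary \ref{resolforI} as an immediate consequence of Theorem \ref{SPHIcells}, namely that $\mathcal{K}_\BI$ is the cellular chain complex of $\Sph^3$ for a free cellular $\BI$-action, spliced $4$-periodically via the norm element $\sigma=\sum_{g\in\BI}g$, which is exactly what you do. Your algebraic verifications — $\ker\partial_3=\Z\sigma$ because the five entries of $\partial_3$ involve elements generating $\BI$, and $\ker(\cdot\,\sigma)=\ker\varepsilon=\ima\partial_1$ — supply precisely the details the paper leaves to the reader (the paper's later, equivalent justification being geometric: the cell $\widetilde{e}^{4q}=\Sph^{4q-1}\ast\widetilde{e}^{4q-1}$ has boundary meeting every $\BI$-translate of the $(4q-1)$-cell once, forcing $\partial_{4q}$ to be the norm map).
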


We are now able to compute the group cohomology of $\BI$ using this result. 
\begin{cor}\label{HstarI}
The group cohomology of $\BI$ with integer coefficients is given as follows:
\[\forall q\in\N,~\left\{\begin{array}{cc}
H^0(\BI,\Z)=\Z & \text{if}~~q=0, \\[.5em]
H^q(\BI,\Z)=\Z/120\Z & \text{if}~~q\equiv 0\pmod 4, \\[.5em]
H^q(\BI,\Z)=0 & \text{otherwise}. \end{array}\right.\]
\end{cor}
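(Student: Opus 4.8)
The plan is to follow the same route as in the octahedral case (Corollary \ref{HstarO}): feed the $4$-periodic free resolution of $\Z$ over $\Z[\BI]$ from Corollary \ref{resolforI} into Lemma \ref{quotient_complex}, apply $-\otimes_{\Z[\BI]}\Z$ (equivalently, apply the augmentation $\varepsilon$ entry by entry to each differential), dualize via $\ho_{\Z[\BI]}(-,\Z)$, and read off the cohomology. First I would record the augmented differentials using Proposition \ref{Ichain}. Since every entry of $\partial_1$ and $\partial_3$ has the form $g-1$, we get $\varepsilon(\partial_1)=0$ and $\varepsilon(\partial_3)=0$; the periodic differential $\partial_{4q}=\sum_{g\in\BI}g$ becomes $\varepsilon(\partial_{4q})=|\BI|=120$; and $\varepsilon(\partial_2)$ is precisely the integer matrix already displayed in Remark \ref{poincare}, whose determinant was computed there to be $1$.

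The crucial observation --- and the point where the icosahedral case diverges from the octahedral one --- is thus that $\varepsilon(\partial_2)$ is \emph{invertible} over $\Z$, i.e.\ it defines an isomorphism $\Z^5\xrightarrow{\sim}\Z^5$ (this is the same fact that makes $\Sph^3/\BI$ a homology sphere). Dualizing the resolution therefore yields a cochain complex whose coboundaries, in increasing degree and with period $4$, are $0$, an isomorphism, $0$, and multiplication by $120$. Concretely, respecting the alternating ranks $1,5,5,1,1,5,5,1,\dots$, the complex reads $\Z\xrightarrow{0}\Z^5\xrightarrow{\sim}\Z^5\xrightarrow{0}\Z\xrightarrow{120}\Z\xrightarrow{0}\Z^5\to\cdots$, repeating thereafter. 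Computing its cohomology degree by degree is then immediate: the leftmost $\Z$ survives to give $H^0=\Z$; the isomorphism induced by $\varepsilon(\partial_2)$ forces $H^1=H^2=0$; the map $\times120$ has trivial kernel so $H^3=0$, while its cokernel contributes $\Z/120\Z$ one step later, giving $H^4=\Z/120\Z$; periodicity then propagates this pattern, so that $H^q=\Z/120\Z$ when $4\mid q$ (for $q>0$) and $H^q=0$ otherwise.

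The computation is essentially mechanical once the resolution is in hand, so there is no genuine obstacle: the only substantive input is the determinant identity $\det\varepsilon(\partial_2)=1$, which is already established in Remark \ref{poincare}. The single point to handle with care is the bookkeeping of the period-$4$ indexing --- matching each coboundary $\delta^n$ to the correct $\partial_{n+1}$ and its rank --- so that the four differential types line up correctly. It is exactly the invertibility of $\varepsilon(\partial_2)$ that annihilates any would-be contribution in degrees $\equiv 2,3\pmod 4$, in contrast to the $\BO$ case (Corollary \ref{HstarO}), where the analogous matrix had determinant $2$ and hence a nontrivial elementary divisor producing the $2$-torsion seen there.
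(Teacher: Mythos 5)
Your proposal is correct and follows essentially the same route as the paper's own proof: reduce via Lemma \ref{quotient_complex}, apply the augmentation to the differentials of the periodic resolution, invoke the determinant computation $\det\varepsilon(\partial_2)=1$ from Remark \ref{poincare}, and read off the 4-periodic pattern. If anything, your write-up is slightly more careful than the paper's, since you make the dualization step $\ho_{\Z[\BI]}(-,\Z)$ and the degree bookkeeping explicit, whereas the paper (in contrast to its octahedral proof) displays only the augmented chain complex.
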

\begin{proof}
In view of Lemma \ref{quotient_complex}, it is suffices to compute $\mathcal{C}(\mathsf{P}^{\infty}_{\BI},\Z[\BI])\otimes_{\Z[\BI]}\Z$, with $\mathcal{C}(\mathsf{P}_{\BI}^\infty,\Z[\BI])$ the complex given in Theorem \ref{SPHNIcells}. Computing the matrices $\varepsilon(\partial_i)$ leads to the following complex
\[\xymatrix{\dotsc \ar[r] & \Z^5 \ar^0[r] & \Z \ar^{\times 120}[r] & \Z \ar[r] & \dotsc \ar[r] & \Z \ar^{\times 120}[r] & \Z \ar^0[r] & \Z^5 \ar^{\partial}[r] & \Z^5 \ar^0[r] & \Z \ar[r] & 0},\]
where $\partial=\partial_2\otimes\Z$ is the matrix given in Remark \ref{poincare}.
\end{proof}
\begin{rem}\label{TZ}
The Corollary \ref{HstarI} agrees with the previously known result on the cohomology of $\BI$, see \cite[Theorem 4.16]{tomoda-zvengrowski_2008}.
\end{rem}

\begin{theo}\label{SPHNIcells}
The chain complex $\mathcal{C}(\mathsf{P}_{\BI}^{4n-1},\Z[\BI])$ of the universal covering space of the icosahedral space forms $\mathsf{P}_{\BI}^{4n-1}$ with the fundamental group acting by covering transformations is the following complex of left $\Z[\BI]$-modules:
\[\xymatrix{0 \ar[r] & \Z[\BI] \ar^{\partial_{4n-1}}[r] & \Z[\BI]^5 \ar[r] & \dotsc\ar[r] & \Z[\BI]^5 \ar^{\partial_2}[r] & \Z[\BI]^5 \ar^{\partial_1}[r] & \Z[\BI] \ar[r] & 0},\]
where the boundaries are as in Corollary \ref{resolforI}.

In particular, the complex is exact in middle terms, i.e.
\[\forall 0<i<4n-1,~H_i(\mathcal{C}(\mathsf{P}_{\BI}^{4n-1},\Z[\BI]))=0\]
and we have
\[H_0(\mathcal{C}(\mathsf{P}_{\BI}^{4n-1},\Z[\BI]))=H_{4n-1}(\mathcal{C}(\mathsf{P}_{\BI}^{4n-1},\Z[\BI]))=\Z.\]
\end{theo}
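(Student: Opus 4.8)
The plan is to follow the octahedral argument of Theorem \ref{SPHNOcells} \emph{verbatim}, replacing the base complex $\mathcal{K}_\BO$ by $\mathcal{K}_\BI$ of Proposition \ref{Ichain} and the norm element of $\BO$ by that of $\BI$. There are two ingredients: building the $\BI$-equivariant cellular structure on $\Sph^{4n-1}$ from the one on $\Sph^3$ via curved joins, and reading off the homology from the topology of the sphere.

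First I would observe that, since $\BI$ acts freely on $\Sph^{4n-1}$ and this sphere is simply connected (as $4n-1 \ge 3$), it is the universal cover of $\mathsf{P}_{\BI}^{4n-1} = \Sph^{4n-1}/\BI$; hence $\mathcal{C}(\mathsf{P}_{\BI}^{4n-1}, \Z[\BI])$ is precisely the cellular chain complex of $\Sph^{4n-1}$ for the equivariant decomposition. To obtain that decomposition I would apply Lemma \ref{lem_join} with $G = \BI$ and base sphere $\Sph^3$, starting from the fundamental domain $\mathscr{F}_3$ and its cellular structure from Theorem \ref{SPHIcells} and iterating along $\Sph^{4n-1} = \Sph^{4(n-1)-1} \ast \Sph^3$. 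This produces a fundamental domain $\mathscr{F}_{4n-1} = \Sigma_1 \ast \cdots \ast \Sigma_{2(n-1)} \ast \mathscr{F}_3$ whose orbit representatives are curved joins of the cells $\widetilde{e}^i_j$ of Theorem \ref{SPHIcells} with lower spheres. A dimension count then gives one orbit of cells in each dimension $\equiv 0, 3 \pmod 4$ and five orbits in each dimension $\equiv 1, 2 \pmod 4$, which is exactly the module pattern $\Z[\BI], \Z[\BI]^5, \Z[\BI]^5, \Z[\BI], \Z[\BI], \dots$ asserted in the statement.

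For the differentials, within each block of four consecutive dimensions the boundary maps agree, by periodicity of the join construction and the Leibniz rule of Lemma \ref{bound_join}, with $\partial_1, \partial_2, \partial_3$ of $\mathcal{K}_\BI$. The only new maps are the connecting differentials $\partial_{4q}$ for $q > 0$. As in the discussion preceding Theorem \ref{SPHNOcells}, I would use that the top cell of each block satisfies $\widetilde{e}^{4q} = \Sph^{4q-1} \ast \widetilde{e}^{4q-1}$, so that its boundary sweeps out the whole sphere $\Sph^{4q-1}$, i.e. all $\BI$-translates of the cell beneath it; hence $\partial_{4q} = \sum_{g \in \BI} g$ is the norm element, matching Corollary \ref{resolforI}.

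Finally, the homology is immediate: the complex computes $H_*(\Sph^{4n-1})$, which is $\Z$ in degrees $0$ and $4n-1$ and vanishes in between, yielding both the middle exactness and the identifications $H_0 = H_{4n-1} = \Z$. The main obstacle is purely bookkeeping: pinning down the signs in the Leibniz formula and confirming that the connecting map is the full group sum and not a proper sub-sum. Since the octahedral case already establishes this template, the icosahedral verification reduces to checking the single base complex $\mathcal{K}_\BI$ (done in Proposition \ref{Ichain}) and transporting it through the joins.
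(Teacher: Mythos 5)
Your proposal is correct and follows essentially the same route as the paper: the paper (explicitly for the octahedral case in Theorem \ref{SPHNOcells}, and implicitly for the icosahedral one) derives the complex from Lemma \ref{lem_join} applied to the decomposition of Theorem \ref{SPHIcells}, identifies the connecting map $\partial_{4q}$ as the norm element via $\widetilde{e}^{4q}=\Sph^{4q-1}\ast\widetilde{e}^{4q-1}$, and reads off the homology from $\Sph^{4n-1}$ being the universal cover of $\mathsf{P}_{\BI}^{4n-1}$. Your dimension count and the norm-element argument match the paper's discussion exactly.
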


\section{The tetrahedral case}\label{tetrahedral}
\indent Even if the case of $\BT$ has already been treated in \cite{fgnsT}, we can recover it by applying the above methods to this case. Note that all the groups in the tetrahedral family are studied in \cite{chirivi-spreafico}, but there $\BT$ is excluded since, while it is the simplest one of the family, it is somehow different from all the other ones. Since it's always the same arguments and the case is  solved, we omit the proofs.
\subsection{Fundamental domain}
\hfill\\

We consider the orbit polytope in $\R^4$
\[\Pol:=\conv(\BT).\]
\newline
\indent This polytope has 24 vertices, 96 edges, 96 faces and 24 facets and is known as the \emph{24-cells} (or the \emph{icositetrachoron}, or even the \emph{octaplex}). Since $\BT$ acts freely on $\Pol_3$, there must be exactly one orbit in $\Pol_3$. We keep the notations of the Section \ref{octahedral} and define
\[\left\{\begin{array}{llll}
\omega_i=\frac{1-i+j+k}{2}=t^{-1}s, \\[.5em]
\omega_j=\frac{1+i-j+k}{2}=st^{-1}, \\[.5em]
\omega_k=\frac{1+i+j-k}{2}=t\end{array}\right.~~~~\text{and}~~~~\left\{\begin{array}{ll}
\omega_0=\frac{1+i+j+k}{2}=s, \\[.5em]
\omega_{ij}:=\frac{1-i-j+k}{2}=t^{-1}. \end{array}\right.\]
\begin{prop}\label{Tfunddom}
The subset of $\Pol$ defined by
\[\mathscr{D}:=[1,\omega_0,\omega_j,\omega_i,\omega_{ij},k]\]
is a (connected) polytopal complex and is a fundamental domain for the action of $\BT$ on $\partial\Pol$.
\end{prop}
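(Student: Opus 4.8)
The plan is to reproduce, in the simplest possible form, the argument used for Propositions~\ref{Ofunddom} and \ref{Ifunddom}. Since $\BT$ acts freely on the $24$ facets of the $24$-cell $\Pol=\conv(\BT)$ and $|\BT|=24$, there is exactly one orbit of facets; hence it suffices to show that $\mathscr{D}$ is a single (octahedral) facet of $\Pol$ containing the vertex $1$, after which Proposition~\ref{VcapVm} applied with $r=1$ finishes the proof.

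First I would locate the supporting hyperplane of $\mathscr{D}$. The centroid of the six listed vertices is $\tfrac12(1,0,0,1)$, which points to the normal vector $v:=(1,0,0,1)$, and a one-line check confirms that $1,\omega_0,\omega_j,\omega_i,\omega_{ij}$ and $k$ all satisfy $\langle v,x\rangle=x_1+x_4=1$; they span the affine hyperplane $\{x_1+x_4=1\}$ and, in suitable orthonormal coordinates on it, appear as $(\pm\tfrac{1}{\sqrt2},0,0)$ and $(0,\pm\tfrac12,\pm\tfrac12)$, i.e. an octahedron (a square bipyramid with apexes $1$ and $k$). To see that $\langle v,x\rangle\le1$ is genuinely a facet inequality of $\Pol$, I would use, as before, the invariance of $\Pol$ under the signed-permutation group $\{\pm1\}^4\rtimes\Sym_4$: both the eight quaternions $\pm1,\pm i,\pm j,\pm k$ and the sixteen quaternions $\tfrac12(\pm1\pm i\pm j\pm k)$ are stable under signed coordinate permutations, so it is enough to verify the single inequality $x_1+x_4\le1$ on the $24$ vertices of $\Pol$, which is immediate (the maximum value $1$ is attained exactly on the six vertices above). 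The orbit of $v$ under $\{\pm1\}^4\rtimes\Sym_4$ consists of the $\binom{4}{2}\cdot 2^2=24$ vectors with two entries $\pm1$ and two zero entries, matching $|\Pol_3|=24$; hence these inequalities describe all facets and $\mathscr{D}$ is exactly the facet cut out by $v$.

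With $\mathscr{D}$ identified as a facet, I would conclude by Proposition~\ref{VcapVm}: here $r=1$, so $r|\BT|=24=|\Pol_3|$ and $1\in\vertices(\mathscr{D})$, which immediately yields that $\mathscr{D}$ is a fundamental domain for $\BT$ on $\partial\Pol$; connectedness is automatic for a single facet, and the condition $\vertices(\mathscr{D})\cap\vertices(\mathscr{D})^{-1}=\{1\}$ holds but is not even needed in the single-orbit case. The only place requiring any care is the hyperplane bookkeeping in the second step --- correctly pinning down the normal $v$ and checking that precisely the six asserted vertices realize the maximum; once that is done, the single-orbit structure makes the fundamental-domain conclusion essentially immediate, which is why the tetrahedral case is the easiest of the three.
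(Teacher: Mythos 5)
Your proposal is correct and takes essentially the approach the paper itself intends: Proposition~\ref{Tfunddom} is stated without proof precisely because it follows ``the same arguments'' as Propositions~\ref{Ofunddom} and~\ref{Ifunddom}, namely exhibiting the facet-defining inequalities via invariance of $\Pol$ under signed permutations and then invoking Proposition~\ref{VcapVm} (i.e.\ Theorem~\ref{thm6.4}), which is exactly what you do. Your computations are sound --- the six listed vertices are precisely the vertices of $\Pol$ on which $x_1+x_4$ attains its maximum $1$, they affinely span the hyperplane $\{x_1+x_4=1\}$, so $\mathscr{D}$ is a single (octahedral) facet, and since $r=1$ gives $r|\BT|=24=|\Pol_3|$ the fundamental-domain conclusion is immediate, with the $\vertices(\mathscr{D})\cap\vertices(\mathscr{D})^{-1}=\{1\}$ condition indeed superfluous in the one-orbit case.
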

\begin{center}
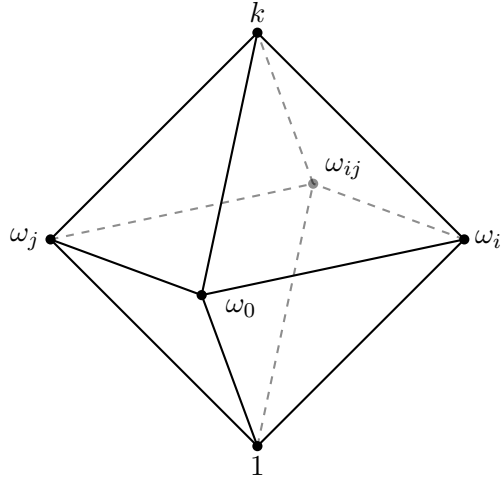
\begin{figure}[h!]
\begin{tikzpicture}[scale=2.75,thick]
  \coordinate (opm) at (-1,0,0);
  \coordinate (opp) at (0,0,0.7);
  \coordinate (omm) at (0,0,-0.7);
  \coordinate (u) at (0,-1,0);
  \coordinate (omp) at (1,0,0);
  \coordinate (k) at (0,1,0);
  
  \draw (opm)--(k);
  \draw[dashed,opacity=0.45] (opm)--(omm);
  \draw (opm)--(opp);
  \draw (opm)--(u);
  \draw (u)--(omp);
  \draw (u)--(opp);
  \draw[dashed,opacity=0.45] (u)--(omm);
  \draw[dashed,opacity=0.45] (omp)--(omm);
  \draw[dashed,opacity=0.45] (omm)--(k);
  \draw (opp)--(k);
  \draw (omp)--(k);
  \draw (opp)--(omp);
  
  \fill[fill=black] (u) circle (0.7pt);
  \fill[fill=black] (opp) circle (0.7pt);
  \fill[fill=black] (opm) circle (0.7pt);
  \fill[fill=black] (omp) circle (0.7pt);
  \fill[fill=black,opacity=0.45] (omm) circle (0.7pt);
  \fill[fill=black] (k) circle (0.7pt);
  
  \draw (0.19,0.03,0.7) node[below]{$\omega_0$};
  \draw (0.15,-0.03,-0.7) node[above]{$\omega_{ij}$};
  \draw (opm) node[left]{$\omega_j$};
  \draw (omp) node[right]{$\omega_i$};
  \draw (k) node[above]{$k$};
  \draw (u) node[below]{$1$};
\end{tikzpicture}
\caption{The tetrahedron $\mathscr{D}$.}
\end{figure}
\end{center}

\subsection{Associated $\BT$-cellular decomposition of $\partial\Pol$}
\hfill\\

The facets of $\mathscr{D}$ are the following
\[\mathscr{D}_2=\{[1,\omega_j,\omega_0],[1,\omega_0,\omega_i],[1,\omega_i,\omega_{ij}],[1,\omega_{ij},\omega_j],\]
\[[k,\omega_j,\omega_0],[k,\omega_0,\omega_i],[k,\omega_i,\omega_{ij}],[k,\omega_{ij},\omega_j]\}.\]
\newline
\indent We remark the following relations among them
\[\omega_{ij}\cdot[1,\omega_j,\omega_0]=[\omega_{ij},k,\omega_i],~\omega_j\cdot[1,\omega_0,\omega_i]=[\omega_j,k,\omega_{ij}],\]
and
\[\omega_0\cdot[1,\omega_i,\omega_{ij}]=[\omega_0,k,\omega_j],~\omega_i\cdot[1,\omega_{ij},\omega_j]=[\omega_i,k,\omega_0].\]
\newline
\indent These are the only relations linking facets, hence we may define the following 2-cells
\[e^2_1:=]1,\omega_j,\omega_0[,~e^2_2:=]1,\omega_0,\omega_i[,~e^2_3:=]1,\omega_i,\omega_{ij}[,~e^2_4:=]1,\omega_{ij},\omega_j[.\]
\newline
\indent Now, define the following 1-cells
\[e^1_1:=]1,\omega_{ij}[,~~e^1_2:=]1,\omega_j[,~~e^1_3:=]1,\omega_0[,~~e^1_4:=]1,\omega_i[.\]
\newline
\indent If we add to this the vertices of $\mathscr{D}$ and its interior, which is formed by only one cell $e^3$ by construction, then we may cover all of $\mathscr{D}$ with these cells and some of their translates. The 1-skeleton of $\mathscr{D}$ is displayed in Figure \ref{1skelT}.
\begin{center}
\begin{figure}[h!]
\begin{tikzpicture}[scale=2.75,thick]
  \coordinate (opm) at (-1,0,0);
  \coordinate (opp) at (0,0,0.7);
  \coordinate (omm) at (0,0,-0.7);
  \coordinate (u) at (0,-1,0);
  \coordinate (omp) at (1,0,0);
  \coordinate (k) at (0,1,0);
  
  \draw[decoration={markings, mark=at position 0.5 with {\arrow{>}}},postaction={decorate},color=green] (opm)--(k);
  \draw[decoration={markings, mark=at position 0.5 with {\arrow{>}}},postaction={decorate},dashed,color=blue] (opm)--(omm);
  \draw[decoration={markings, mark=at position 0.5 with {\arrow{>}}},postaction={decorate},color=orange] (opp)--(opm);
  \draw[decoration={markings, mark=at position 0.5 with {\arrow{>}}},postaction={decorate},color=red] (u)--(opm);
  \draw[decoration={markings, mark=at position 0.5 with {\arrow{>}}},postaction={decorate},color=blue] (u)--(omp);
  \draw[decoration={markings, mark=at position 0.5 with {\arrow{>}}},postaction={decorate},color=green] (u)--(opp);
  \draw[decoration={markings, mark=at position 0.5 with {\arrow{>}}},postaction={decorate},dashed,color=orange] (u)--(omm);
  \draw[decoration={markings, mark=at position 0.5 with {\arrow{>}}},postaction={decorate},dashed,color=green] (omm)--(omp);
  \draw[decoration={markings, mark=at position 0.5 with {\arrow{>}}},postaction={decorate},dashed,color=red] (omm)--(k);
  \draw[decoration={markings, mark=at position 0.5 with {\arrow{>}}},postaction={decorate},color=blue] (opp)--(k);
  \draw[decoration={markings, mark=at position 0.5 with {\arrow{>}}},postaction={decorate},color=orange] (omp)--(k);
  \draw[decoration={markings, mark=at position 0.5 with {\arrow{>}}},postaction={decorate},color=red] (omp)--(opp);
  
  \fill[fill=black] (u) circle (0.7pt);
  \fill[fill=black] (opp) circle (0.7pt);
  \fill[fill=black] (opm) circle (0.7pt);
  \fill[fill=black] (omp) circle (0.7pt);
  \fill[fill=black,opacity=0.45] (omm) circle (0.7pt);
  \fill[fill=black] (k) circle (0.7pt);
  
  \draw (0.19,0.03,0.7) node[below]{$\omega_0$};
  \draw (0.15,-0.03,-0.7) node[above]{$\omega_{ij}$};
  \draw (opm) node[left]{$\omega_j$};
  \draw (omp) node[right]{$\omega_i$};
  \draw (k) node[above]{$k$};
  \draw (u) node[below]{$1$};
  
  \draw (-0.6,-0.6,0) node{$e^1_2$};
  \draw (-0.2,-0.5,0.2) node{$e^1_3$};
  \draw (0.13,-0.6,-0.2) node{$e^1_1$};
  \draw (0.6,-0.6,0) node{$e^1_4$};
  \draw (0.5,-0.2,0.1) node{$\omega_ie^1_2$};
  \draw (0.55,0,-0.1) node{$\omega_{ij}e^1_3$};
  \draw (-0.02,0.08,-0.1) node{$\omega_je^1_4$};
  \draw (-0.45,-0.05,0.1) node{$\omega_0e^1_1$};
  \draw (0.02,0.33,-0.1) node{$\omega_{ij}e^1_2$};
  \draw (-0.3,0.45,0) node{$\omega_0e^1_4$};
  \draw (-0.62,0.62,0) node{$\omega_je^1_3$};
  \draw (0.62,0.62,0) node{$\omega_ie^1_1$};
\end{tikzpicture}
\caption{The oriented 1-skeleton of $\mathscr{D}$.}
\label{1skelT}
\end{figure}
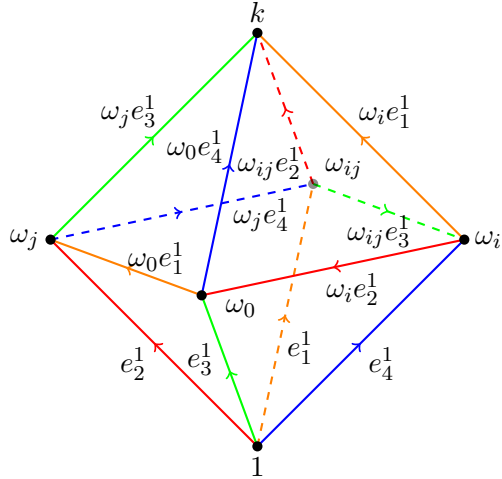
\end{center}
\begin{prop}\label{DPTcells}
Letting $E^0:=\{1\}$, $E^1:=\{e^1_i,~1\le i\le 4\}$, $E^2:=\{e^2_i,~1\le i \le 4\}$ and $E^3:=\{e^3\}$ with the above notations and denoting by $p : \partial\Pol \stackrel{\tiny{\sim}}\to \Sph^3$ the $\BT$-homeomorphism, we obtain the following $\BT$-equivariant cellular decomposition of the sphere
\[\Sph^3=\coprod_{\substack{0 \le j \le 3 \\ e\in E^j,g\in\BT}}g\phi(e).\]
\end{prop}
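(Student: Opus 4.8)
The plan is to follow the arguments already used for $\BO$ and $\BI$, simplified by the fact that here the fundamental domain $\mathscr{D}$ is a single octahedral facet of the $24$-cell $\Pol$. By Proposition~\ref{Tfunddom} we already know that $\mathscr{D}=[1,\omega_0,\omega_j,\omega_i,\omega_{ij},k]$ is a fundamental domain for $\BT$ acting on $\partial\Pol$, and by Theorem~\ref{thm6.4}(iii) the radial projection $\phi\colon\partial\Pol\to\Sph^3$, $x\mapsto x/|x|$, is a $\BT$-equivariant homeomorphism. It therefore suffices to produce a regular $\BT$-equivariant CW structure on $\partial\Pol$ with the stated orbit representatives and to transport it through $\phi$; regularity is automatic, since each cell will be the relative interior of a face of the polytope $\Pol$, whose closure is a ball.

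First I would simply take the polytopal face structure of $\partial\Pol$. Because $\mathscr{D}$ is a single convex facet, no coarsening is required --- in contrast with the octahedral case, where $\mathscr{D}$ was a union of six tetrahedra and cells had to be merged. The open cells of $\mathscr{D}$ are then its six vertices, its twelve open edges, the eight open triangles listed in $\mathscr{D}_2$, and its single open interior $e^3$. The four displayed relations, such as $\omega_{ij}\cdot[1,\omega_j,\omega_0]=[\omega_{ij},k,\omega_i]$, realize each of the four triangles through $k$ as a $\BT$-translate of one of the four triangles $e^2_1,\dots,e^2_4$ through $1$, and (cf. Figure~\ref{1skelT}) each remaining edge is a translate of one of $e^1_1,\dots,e^1_4$ while every vertex is a translate of $1$. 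This designates $E^0,E^1,E^2,E^3$ as the candidate systems of orbit representatives.

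It remains to verify that they are complete and irredundant. The $\BT$-action on each set $\Pol_d$ of $d$-faces is free by Theorem~\ref{thm6.4}(ii), and $\Pol$ has $24$ vertices, $96$ edges, $96$ faces and $24$ facets with $|\BT|=24$; hence the numbers of orbits in dimensions $0,1,2,3$ are $1,4,4,1$, exactly matching $|E^0|,|E^1|,|E^2|,|E^3|$. Irredundancy is the argument of Proposition~\ref{VcapVm}: each listed representative is incident to the vertex $1$, and $\vertices(\mathscr{D})\cap\vertices(\mathscr{D})^{-1}=\{1\}$, so a $g\ne1$ identifying two of them would force both $g$ and $g^{-1}$ to lie in $\vertices(\mathscr{D})$, which is impossible. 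Since the representatives in each dimension lie in pairwise distinct orbits and their number equals the number of orbits, they form a complete irredundant system. Finally, the relative interiors of the faces of $\partial\Pol$ partition it, the assignment $(g,e)\mapsto g\cdot\mathrm{relint}(e)$ is a bijection onto these faces by freeness, and applying the equivariant homeomorphism $\phi$ delivers the decomposition $\Sph^3=\coprod_{0\le j\le3,\;e\in E^j,\;g\in\BT}g\phi(e)$.

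The only genuinely delicate point is the bookkeeping of which lower-dimensional faces of $\mathscr{D}$ are $\BT$-equivalent. This is finite, and for the decomposition itself it is in fact largely bypassed by the counting argument, which forces completeness once distinctness is known. The explicit identifications recorded by the four relations and by Figure~\ref{1skelT} will, however, be exactly the data needed afterwards to compute the boundary maps of the associated cellular chain complex.
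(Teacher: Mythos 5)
Your proposal is correct and follows the paper's own method: the paper omits the proof of Proposition \ref{DPTcells} precisely because it is ``the same arguments'' as in the octahedral and icosahedral cases, namely listing the faces of the fundamental domain $\mathscr{D}$ of Proposition \ref{Tfunddom}, identifying which are $\BT$-translates of one another, and transporting the resulting cell structure through the equivariant homeomorphism of Theorem \ref{thm6.4}. Your dimension-by-dimension completeness check (orbit counting $24/24=1$, $96/24=4$, $96/24=4$, $24/24=1$ via freeness, plus irredundancy from $\vertices(\mathscr{D})\cap\vertices(\mathscr{D})^{-1}=\{1\}$ applied to faces through the vertex $1$) is exactly the mechanism of Proposition \ref{VcapVm} extended from facets to faces of every dimension, and it cleanly supplies the verification that the paper leaves implicit.
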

\indent We now have to compute the boundaries of the cells and the resulting cellular homology chain complex. We choose to orient the 3-cell $e^3$ directly, and the 2-cells undirectly.

\begin{center}
\begin{figure}[h!]
\begin{tikzpicture}[scale=2.75]
  \coordinate (s3) at (-1,0,0);
  \coordinate (s2) at (0,0,-0.7);
  \coordinate (s1) at (0,0,0.7);
  \coordinate (u) at (0,-1,0);
  \coordinate (o1) at (0,1,0);
  \coordinate (o2) at (1,0,0);
  
  \draw[thick] (u)--(s1);
  \draw[dashed,opacity=0.45] (u)--(s2);
  \draw[thick] (u)--(s3);
  \draw[thick] (u)--(o2);
  \draw[dashed,opacity=0.45] (s2)--(o2);
  \draw[dashed,opacity=0.45] (s3)--(s2);
  \draw[thick] (s3)--(s1);
  \draw[thick] (o1)--(o2);
  \draw[dashed,opacity=0.45] (s2)--(o1);
  \draw[thick] (s1)--(o1);
  \draw[thick] (s3)--(o1);
  \draw[thick] (s1)--(o2);
  
  \draw[pattern=north west lines,pattern color=orange,opacity=0.5] (s2)--(s3)--(u);
  \draw[pattern=north west lines,pattern color=red,opacity=0.5] (s2)--(o1)--(o2);
  \draw[pattern=north west lines,pattern color=green,opacity=0.5] (s2)--(s3)--(o1);
  \draw[pattern=north west lines,pattern color=blue,opacity=0.5] (s2)--(o2)--(u);

  \fill[fill=black] (u) circle (0.7pt);
  \fill[fill=black] (s1) circle (0.7pt);
  \fill[fill=black,opacity=0.45] (s2) circle (0.7pt);
  \fill[fill=black] (s3) circle (0.7pt);
  \fill[fill=black] (o1) circle (0.7pt);
  \fill[fill=black] (o2) circle (0.7pt);
  
  \draw (0.43,0.35,0) node{$\omega_{ij} e^2_1$};
  \draw[decoration={markings, mark=at position 0.4 with {\arrow{>}}},postaction={decorate}] (0.43,0.35,0) ellipse (0.20 and 0.12);
  \draw (0.4,-0.25,0) node{$e^2_3$};
  \draw[decoration={markings, mark=at position 0.5 with {\arrow{<}}},postaction={decorate}] (0.4,-0.25,0) ellipse (0.13 and 0.17);
  \draw (-0.07,-0.07,0) node{$e^2_4$};
  \draw[decoration={markings, mark=at position 0.4 with {\arrow{<}}},postaction={decorate}] (-0.07,-0.07,0) ellipse (0.13 and 0.15);
  \draw (-0.22,0.37,0) node{$\omega_je^2_2$};
  \draw[decoration={markings, mark=at position 0.3 with {\arrow{>}}},postaction={decorate}] (-0.22,0.37,0) ellipse (0.26 and 0.15);
  
\end{tikzpicture}~~~~~~\begin{tikzpicture}[scale=2.75,thick]
  \coordinate (s3) at (-1,0,0);
  \coordinate (s2) at (0,0,-0.7);
  \coordinate (s1) at (0,0,0.7);
  \coordinate (u) at (0,-1,0);
  \coordinate (o1) at (0,1,0);
  \coordinate (o2) at (1,0,0);
  
  \draw (u)--(s1);
  \draw (u)--(s3);
  \draw (u)--(o2);
  \draw (s3)--(s1);
  \draw (o1)--(o2);
  \draw (s1)--(o1);
  \draw (s3)--(o1);
  \draw (s1)--(o2);
  
  \fill[fill=orange,opacity=0.7] (s1)--(o1)--(o2);
  \fill[fill=red,opacity=0.7] (u)--(s1)--(s3);
  \fill[fill=blue,opacity=0.7] (s1)--(s3)--(o1);
  \fill[fill=green,opacity=0.7] (u)--(s1)--(o2);

  \fill[fill=black] (u) circle (0.7pt);
  \fill[fill=black] (s1) circle (0.7pt);
  \fill[fill=black] (s3) circle (0.7pt);
  \fill[fill=black] (o1) circle (0.7pt);
  \fill[fill=black] (o2) circle (0.7pt);
  
  \draw (0.25,0.25,0) node{$\omega_i e^2_4$};
  \draw[decoration={markings, mark=at position 0.4 with {\arrow{<}}},postaction={decorate}] (0.25,0.25,0) ellipse (0.2 and 0.15);
  \draw (0.2,-0.5,0) node{$e^2_2$};
  \draw[decoration={markings, mark=at position 0.4 with {\arrow{>}}},postaction={decorate}] (0.2,-0.5,0) ellipse (0.17 and 0.12);
  \draw (-0.4,-0.4,0) node{$e^2_1$};
  \draw[decoration={markings, mark=at position 0.4 with {\arrow{>}}},postaction={decorate}] (-0.4,-0.4,0) ellipse (0.1 and 0.15);
  \draw (-0.5,0.15,0) node{$\omega_0e^2_3$};
  \draw[decoration={markings, mark=at position 0.4 with {\arrow{<}}},postaction={decorate}] (-0.5,0.15,0) ellipse (0.2 and 0.17);
\end{tikzpicture}
\caption{The oriented 2-skeleton of $\mathscr{D}$.}
\label{T2cells}
\end{figure}
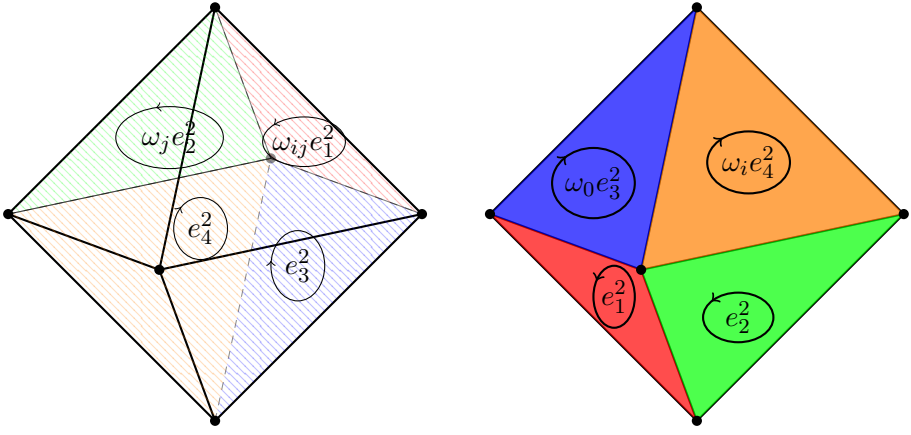
\end{center}
These orientations allow us to easily compute the boundaries of the representing cells $e^u_v$ and give the resulting chain complex of free left $\Z[\BT]$-modules
\begin{prop}\label{Tchain}
The cellular homology complex of $\partial\Pol$ associated to the cellular structure given in Proposition \ref{DPTcells} is the chain complex of free left $\Z[\BT]$-modules
\[\mathcal{K}_\BT:=\left(\xymatrix{\Z[\BT] \ar^{\partial_3}[r] & \Z[\BT]^4 \ar^{\partial_2}[r] & \Z[\BT]^4 \ar^{\partial_1}[r] & \Z[\BT]}\right),\]
where
\[\partial_1=\begin{pmatrix}\omega_{ij}-1 \\ \omega_j-1 \\ \omega_0-1 \\ \omega_i-1\end{pmatrix},~~\partial_2=\begin{pmatrix}\omega_0 & -1 & 1 & 0 \\ 0 & \omega_i & -1 & 1 \\ 1 & 0 & \omega_{ij} & -1 \\ -1 &  1 & 0 & \omega_j\end{pmatrix},\]
\[\partial_3=\begin{pmatrix}1-\omega_{ij} & 1-\omega_j & 1-\omega_0 & 1-\omega_i\end{pmatrix}.\]
\end{prop}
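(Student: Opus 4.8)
The plan is to run exactly the same cellular homology computation as in the octahedral and icosahedral cases (Propositions \ref{Ochain} and \ref{Ichain}): fix the orientations displayed in Figures \ref{1skelT} and \ref{T2cells}, read off the oriented boundary of each orbit representative as a $\Z[\BT]$-linear combination of orbit representatives, and record the resulting matrices. Because $\BT$ acts freely on the faces of $\Pol$ (Theorem \ref{thm6.4}) and $\mathscr{D}$ is a fundamental domain (Proposition \ref{Tfunddom}), every face of $\partial\Pol$ is uniquely of the form $g\cdot e$ with $g\in\BT$ and $e$ one of the chosen cells $1,\,e^1_i,\,e^2_i,\,e^3$; this uniqueness is precisely what lets us rewrite each boundary face in terms of the representatives and thereby obtain honest $\Z[\BT]$-linear maps.

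First I would compute $\partial_1$. Each $1$-cell is $e^1_i=\,]1,\omega_\bullet[$ with endpoints $1$ and one of $\omega_{ij},\omega_j,\omega_0,\omega_i$. Since all six vertices of $\mathscr{D}$ lie in the single orbit $\BT\cdot 1$ and the chosen $0$-cell is $1$, reading the oriented endpoints off Figure \ref{1skelT} gives $\partial_1 e^1_i=\omega_\bullet\cdot 1-1$, i.e.\ the column $(\omega_{ij}-1,\ \omega_j-1,\ \omega_0-1,\ \omega_i-1)^{\mathsf{T}}$. Next, for $\partial_2$, each $2$-cell $e^2_l$ is a triangle $]1,\omega_\bullet,\omega_\star[$ whose oriented boundary is a sum of three edges. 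Two of these edges are incident to the vertex $1$ and are therefore directly $\pm e^1_i$, while the third is the edge opposite to $1$; that opposite edge is a translate $g\cdot e^1_m$, and I would identify $g$ and $m$ by applying the appropriate group element (exactly as in the facet relations $\omega_{ij}\cdot[1,\omega_j,\omega_0]=[\omega_{ij},k,\omega_i]$, and the three analogous ones) to carry it back onto an edge incident to $1$. Collecting all signs from the orientations of Figure \ref{1skelT} then assembles the $4\times 4$ matrix $\partial_2$.

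Finally, $\partial_3$ is the boundary of the single $3$-cell $e^3$, the open cell filling $\mathscr{D}$, whose faces are exactly the eight facets listed in $\mathscr{D}_2$. These eight facets fall into the four $\BT$-orbits of $e^2_1,\dots,e^2_4$, the four facets containing $1$ serving as representatives and the four containing $k$ being the translates recorded in the relations $\omega_\bullet\cdot e^2_l=[\omega_\bullet,k,\dots]$. With $e^3$ oriented directly and the $2$-cells oriented undirectly, the front facet contributes $e^2_l$ with one sign and its back translate contributes $\omega_\bullet\,e^2_l$ with the opposite sign, yielding $\partial_3=(1-\omega_{ij},\ 1-\omega_j,\ 1-\omega_0,\ 1-\omega_i)$.

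The main obstacle is the sign and translate bookkeeping: one must orient every cell consistently, correctly determine which group element carries each ``far'' edge or facet back to a representative, and keep the left-module conventions uniform throughout. I would guard against errors by checking $\partial_1\partial_2=0$ and $\partial_2\partial_3=0$ directly in $\Z[\BT]$; these identities both constrain the signs and confirm that the three matrices are assembled correctly, so that $\mathcal{K}_\BT$ is indeed a chain complex computing the cellular homology of $\partial\Pol$ as in Proposition \ref{DPTcells}.
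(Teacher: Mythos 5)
Your proposal is correct and follows essentially the same route as the paper, which omits the detailed proof precisely because it is "the same argument" as in the octahedral and icosahedral cases: use the free action on faces and the fundamental domain $\mathscr{D}$ to write each boundary face uniquely as $g\cdot e$ for a representative cell, then read the signs and translates off the oriented Figures \ref{1skelT} and \ref{T2cells} (e.g.\ the far edge of $e^2_1$ is $\omega_0e^1_1$ since $\omega_0\omega_{ij}=\omega_j$, and the four facets through $k$ are $\omega_{ij}e^2_1,\omega_je^2_2,\omega_0e^2_3,\omega_ie^2_4$). Your closing verification that $\partial_2\partial_1=0$ and $\partial_3\partial_2=0$ in $\Z[\BT]$ is a sound additional safeguard and does indeed hold with the stated matrices.
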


\subsection{The case of spheres and free resolution of the trivial $\BT$-module}
\hfill\\

Here again, we shall describe the fundamental domain obtained above in $\Sph^3$ in terms of curved join and give a fundamental domain on $\Sph^{4n-1}$ and the equivariant cellular structure on that goes with it. We finish by giving a 4-periodic free resolution of $\Z$ over $\Z[\BT]$.
\begin{theo}\label{SPHTcells}
The following subset of $\Sph^3$ is a fundamental domain for the action of $\BT$
\[\mathscr{F}_{3}:=(1\ast\omega_{ij}\ast\omega_i\ast\omega_0\ast\omega_j)\cup(\omega_{ij}\ast\omega_i\ast\omega_0\ast\omega_j\ast k).\]
In particular, the sphere $\Sph^3$ admits a $\BT$-equivariant cellular decomposition with the following cells as orbit representatives
\[\widetilde{e}^0:=1\ast\emptyset=\{1\},\]
\[\widetilde{e}^1_1:=\mathrm{relint}(1\ast\omega_{ij}),~\widetilde{e}^1_2:=\mathrm{relint}(1\ast\omega_j),~\widetilde{e}^1_3:=\mathrm{relint}(1\ast\omega_0),~\widetilde{e}^1_4:=\mathrm{relint}(1\ast\omega_i),\]
\[\widetilde{e}^2_1:=\mathrm{relint}(1\ast\omega_j\ast\omega_0),~\widetilde{e}^2_2:=\mathrm{relint}(1\ast\omega_0\ast\omega_i),~\widetilde{e}^2_3:=\mathrm{relint}(1\ast\omega_i\ast\omega_{ij}),~\widetilde{e}^2_4:=\mathrm{relint}(1\ast\omega_{ij}\ast\omega_j),\]
\[\widetilde{e}^3:=\interior{\mathscr{F}}_{3}.\]

Furthermore, the associated cellular homology complex is the chain complex $\mathcal{K}_\BT$ from the Proposition \ref{Tchain}.
\end{theo}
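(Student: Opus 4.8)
The plan is to mirror the octahedral argument of Theorem~\ref{SPHOcells} verbatim: transport the polytopal data of Propositions~\ref{Tfunddom} and~\ref{DPTcells} to the sphere through the radial projection, then read off the boundaries from the chosen orientations. First I would settle the fundamental-domain claim. By Proposition~\ref{Tfunddom} the octahedral facet $\mathscr{D}=[1,\omega_0,\omega_j,\omega_i,\omega_{ij},k]$ is a fundamental domain for $\BT$ on $\partial\Pol$, and Theorem~\ref{thm6.4}(iii) supplies the $\BT$-equivariant homeomorphism $\mathrm{pr}:\partial\Pol\xrightarrow{\sim}\Sph^3$, $x\mapsto x/|x|$; hence $\mathrm{pr}(\mathscr{D})$ is a fundamental domain on $\Sph^3$. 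It remains to recognize $\mathscr{F}_3$. The octahedron $\mathscr{D}$ is the union of the two square pyramids $[1,\omega_{ij},\omega_i,\omega_0,\omega_j]$ and $[k,\omega_{ij},\omega_i,\omega_0,\omega_j]$ over the common equatorial square $[\omega_{ij},\omega_i,\omega_0,\omega_j]$; since the radial image of the relative interior of a polytopal simplex $[v_0,\dots,v_d]$ is by definition the iterated curved join $v_0\ast\cdots\ast v_d$ introduced in Section~\ref{orbit}, these two pyramids project onto $1\ast\omega_{ij}\ast\omega_i\ast\omega_0\ast\omega_j$ and $\omega_{ij}\ast\omega_i\ast\omega_0\ast\omega_j\ast k$, so that $\mathrm{pr}(\mathscr{D})=\mathscr{F}_3$.

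Next I would transport the cell structure. Proposition~\ref{DPTcells} already gives a $\BT$-equivariant decomposition of $\Sph^3$ whose orbit representatives are the radial images of the open cells $e^0,e^1_i,e^2_i,e^3$ of $\mathscr{D}$ displayed in Figures~\ref{1skelT} and~\ref{T2cells}. Under $\mathrm{pr}$ the relative interior of each polytopal face becomes the relative interior of the corresponding curved join, producing exactly the listed representatives $\widetilde{e}^1_i=\mathrm{relint}(1\ast\omega_\bullet)$, $\widetilde{e}^2_i=\mathrm{relint}(1\ast\omega_\bullet\ast\omega_\bullet)$ and $\widetilde{e}^3=\interior{\mathscr{F}}_3$. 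Because the decomposition comes from open faces of a polytopal complex it is automatically regular, so each incidence number is a genuine sign and the cellular chain complex is well defined.

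Finally I would compute the boundaries to recover $\mathcal{K}_\BT$. Orienting $e^3$ directly and the $e^2_i$ undirectly as in Figure~\ref{T2cells}, the map $\partial_1$ is read off from the endpoints of the oriented $1$-cells $e^1_i={}]1,\omega_\bullet[$, giving the column $(\omega_{ij}-1,\,\omega_j-1,\,\omega_0-1,\,\omega_i-1)^{\mathsf T}$. For $\partial_2$ I would write the oriented boundary of each triangular $2$-cell as a signed sum of edges and then use the gluing relations $\omega_{ij}\cdot[1,\omega_j,\omega_0]=[\omega_{ij},k,\omega_i]$ together with its three companions (the only relations linking facets, listed just before Proposition~\ref{DPTcells}) to rewrite every translated edge as a group multiple of an orbit representative $e^1_j$; alternatively one may apply the Leibniz formula of Lemma~\ref{bound_join} to the curved joins directly. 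Collecting coefficients reproduces the matrix $\partial_2$ of Proposition~\ref{Tchain}, and the oriented boundary of $e^3$ yields $\partial_3$.

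The only real obstacle is bookkeeping: correctly tracking the group-element coefficients and the signs in $\partial_2$, since each boundary edge of a $2$-cell is a $\BT$-translate of an orbit representative and its sign is determined by comparing the induced orientation with the chosen one. This is precisely the computation already performed in the $\BO$ and $\BI$ cases, so no genuinely new difficulty appears and, as announced at the start of this section, the detailed verification is omitted.
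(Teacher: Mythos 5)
Your proposal is correct and takes essentially the same route the paper intends: the paper explicitly omits the proofs in the tetrahedral section as being ``always the same arguments'' as in the octahedral/icosahedral cases, and your argument --- transporting Propositions \ref{Tfunddom}, \ref{DPTcells} and \ref{Tchain} through the radial projection of Theorem \ref{thm6.4}(iii), identifying $\mathrm{pr}(\mathscr{D})$ with $\mathscr{F}_3$ via the two square pyramids over the equatorial square $[\omega_{ij},\omega_i,\omega_0,\omega_j]$, and reading the matrices $\partial_1,\partial_2,\partial_3$ off the oriented cells and the gluing relations --- is exactly that argument. One cosmetic correction: the curved join $v_0\ast\cdots\ast v_d$ is by definition the projection of the \emph{closed} convex hull, so the open cells are the relative interiors of these joins (as in the statement of the theorem), rather than the join being ``the radial image of the relative interior'' as you phrase it.
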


\begin{cor}\label{resolforT}
The following chain complex is a 4-periodic free resolution of $\Z$ over $\Z[\BT]$
\[\xymatrix{\dotsc \ar[r] & \Z[\BT]^4 \ar^{\partial_{4q-3}}[r] & \Z[\BT] \ar^{\partial_{4q-4}}[r] & \dotsc\ar[r] & \Z[\BT]^4 \ar^{\partial_2}[r] & \Z[\BT]^4 \ar^<<<<<{\partial_1}[r] & \Z[\BT] \ar^<<<<<{\varepsilon}[r] & \Z \ar[r] & 0},\]
where
\[\partial_{4q-3}=\begin{pmatrix}\omega_{ij}-1 \\ \omega_j-1 \\ \omega_0-1 \\ \omega_i-1\end{pmatrix},~~\partial_{4q-2}=\begin{pmatrix}\omega_0 & -1 & 1 & 0 \\ 0 & \omega_i & -1 & 1 \\ 1 & 0 & \omega_{ij} & -1 \\ -1 &  1 & 0 & \omega_j\end{pmatrix},\]
\[\partial_{4q-1}=\begin{pmatrix}1-\omega_{ij} & 1-\omega_j & 1-\omega_0 & 1-\omega_i\end{pmatrix},~~\partial_{4q}=\begin{pmatrix}\sum_{g\in\BT}g\end{pmatrix}.\]
\end{cor}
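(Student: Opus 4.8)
The plan is to promote the finite cellular complex $\mathcal{K}_\BT$ of Proposition \ref{Tchain} to a periodic resolution by augmenting it and splicing it to itself through the norm element, in complete analogy with the octahedral Corollary \ref{resolforO}. First I would recall that, by Theorem \ref{SPHTcells}, $\mathcal{K}_\BT$ is the cellular chain complex of the free $\BT$-CW-structure on $\Sph^3$; it is therefore a complex of \emph{free} $\Z[\BT]$-modules whose homology coincides with $H_\ast(\Sph^3,\Z)$. Appending the augmentation $\varepsilon\colon\Z[\BT]\to\Z$, the exactness of
\[
\Z[\BT]^4\xrightarrow{\partial_2}\Z[\BT]^4\xrightarrow{\partial_1}\Z[\BT]\xrightarrow{\varepsilon}\Z\to 0
\]
at the three rightmost spots is precisely the statement that $\Sph^3$ is connected and satisfies $H_1(\Sph^3)=H_2(\Sph^3)=0$; in particular $\ima\partial_1=\ker\varepsilon$ is the augmentation ideal.

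The only genuinely new map is $\partial_{4q}$, given by the norm element $N:=\sum_{g\in\BT}g$. The computation underpinning everything is the identity $N\cdot x=\varepsilon(x)\,N$ for all $x\in\Z[\BT]$, a consequence of $gN=Ng=N$ for every $g\in\BT$. From it I read off the two splicing identities $\ima\partial_{4q}=\Z\cdot N$ and $\ker\partial_{4q}=\ker\varepsilon=\ima\partial_1$. Granting for the moment that $\ker\partial_3=\Z\cdot N$, exactness at the left-hand $\Z[\BT]$ (the slot where $\partial_3$ and $\partial_4$ meet) reads $\ker\partial_3=\ima\partial_4$, while exactness at the next $\Z[\BT]$ reads $\ker\partial_4=\ima\partial_1=\ima\partial_5$; both hold. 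Repeating the block $\partial_1,\partial_2,\partial_3,\partial_4$ then yields a complex that is exact at every module, i.e. a $4$-periodic free resolution of $\Z$ over $\Z[\BT]$.

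The main obstacle is the remaining claim $\ker\partial_3=\Z\cdot N$, that is, that the top equivariant homology $H_3(\Sph^3)\cong\Z$ is generated at the chain level by the norm element. The inclusion $\Z\cdot N\subseteq\ker\partial_3$ is immediate, since each entry $1-\omega$ of $\partial_3$ annihilates $N$. For the reverse inclusion one uses that $\BT\subset\Sph^3\subset SO(4)$ acts by orientation-preserving isometries and hence trivially on $H_3(\Sph^3)=\Z$; equivalently, the only homomorphism $\BT\to\Aut(\Z)=\{\pm1\}$ is trivial because $\BT^{\mathrm{ab}}\cong\Z/3\Z$. Thus the rank-one submodule $\ker\partial_3$ consists of $\BT$-invariants, whose group is exactly $\Z\cdot N$, and since $N\in\ker\partial_3$ the two coincide. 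With this in hand the periodicity and exactness follow as above, the boundary matrices being literally those of Proposition \ref{Tchain} together with $\partial_{4q}=(N)$.
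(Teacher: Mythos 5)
Your proposal is correct: every step checks out, including the two points that genuinely need an argument, namely the norm identities $Nx=xN=\varepsilon(x)N$ and the key claim $\ker\partial_3=\Z\cdot N$. Your proof of the latter is sound: $\ker\partial_3$ is a left $\Z[\BT]$-submodule of $\Z[\BT]$ equal to $H_3(\Sph^3,\Z)\cong\Z$ (there being no $4$-chains), the action on it is trivial either because $\BT$ acts through $SO(4)$ and hence preserves orientation, or because $\mathrm{Hom}(\BT,\{\pm1\})=\mathrm{Hom}(\Z/3\Z,\Z/2\Z)=0$, and the invariants of the regular representation are exactly $\Z\cdot N$.

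Your route differs from the paper's, however. The paper omits the proof in the tetrahedral case and, in the model octahedral case (Corollary \ref{resolforO}), treats the statement as an immediate consequence of Theorem \ref{SPHOcells}; the mechanism it actually develops is geometric: by Lemma \ref{lem_join} and Proposition \ref{OfunddomN}, the fundamental domain on $\Sph^3$ is propagated by iterated curved joins to equivariant CW structures on all spheres $\Sph^{4n-1}$, and the boundary of the top cell $\widetilde{e}^{4q}=\Sph^{4q-1}\ast\widetilde{e}^{4q-1}$ sweeps out the \emph{entire} orbit of the $(4q-1)$-cell, which is precisely why $\partial_{4q}$ is the norm element; exactness then comes from the homology of $\Sph^{4n-1}$ (Theorems \ref{SPHNOcells}, \ref{SPHNTcells}), the resolution being the chain complex of $\Sph^\infty$ in the limit. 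You instead splice the finite complex $\mathcal{K}_\BT$ with itself purely algebraically, which is more self-contained: you never need the join construction, only the finite complex and the group theory of $\BT$. What your argument does not give, and the paper's does, is the geometric realization of the whole resolution as an equivariant cell structure on $\Sph^{4n-1}$, which the paper needs anyway for the space forms $\mathsf{P}^{4n-1}_{\BT}$; conversely, your explicit verification of $\ker\partial_3=\Z\cdot N$ makes precise a point the paper leaves entirely implicit. One stylistic caution: the differentials act on left modules by right multiplication by the stated matrices, so it is worth saying once that this is the convention under which your identity $xN=\varepsilon(x)N$ computes both $\ima\partial_{4q}$ and $\ker\partial_{4q}$.
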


\begin{cor}\label{HstarT}
The group cohomology of $\BT$ with integer coefficients is given as follows:
\[\forall q\ge 1,~\left\{\begin{array}{cc}
H^q(\BT,\Z)=\Z & \text{if}~~q=0, \\[.5em]
H^q(\BT,\Z)=\Z/24\Z & \text{if}~~q\equiv 0\pmod 4, \\[.5em]
H^q(\BT,\Z)=\Z/3\Z & \text{if}~~q\equiv 2 \pmod 4, \\[.5em]
H^q(\BT,\Z)=0 & \text{otherwise}.\end{array}\right.\]
\end{cor}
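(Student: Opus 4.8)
The plan is to follow verbatim the argument used for Corollaries \ref{HstarO} and \ref{HstarI}. By Lemma \ref{quotient_complex}, the cohomology $H^*(\BT,\Z)$ is the cohomology of the complex obtained by tensoring the free resolution of Corollary \ref{resolforT} with the trivial module over $\Z[\BT]$ and then dualizing; concretely, one replaces each differential $\partial_i$ by the integer matrix $\varepsilon(\partial_i)$ obtained by augmenting termwise, and takes the dual cochain complex.

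First I would record the augmented matrices. Every entry of $\partial_{4q-3}$ and of $\partial_{4q-1}$ is of the form $\pm(\omega_x-1)$, which lies in the augmentation ideal, so $\varepsilon(\partial_{4q-3})=0$ and $\varepsilon(\partial_{4q-1})=0$. Likewise $\varepsilon(\partial_{4q})=\varepsilon\!\left(\sum_{g\in\BT}g\right)=|\BT|=24$. Thus the only nonzero differential to analyze is
\[M:=\varepsilon(\partial_2)=\begin{pmatrix}1 & -1 & 1 & 0 \\ 0 & 1 & -1 & 1 \\ 1 & 0 & 1 & -1 \\ -1 & 1 & 0 & 1\end{pmatrix},\]
and the resulting integral cochain complex is $4$-periodic:
\[0\to\Z\xrightarrow{0}\Z^4\xrightarrow{M^{\mathsf T}}\Z^4\xrightarrow{0}\Z\xrightarrow{\times 24}\Z\xrightarrow{0}\Z^4\xrightarrow{M^{\mathsf T}}\cdots.\]

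The main (and only genuine) computation is the Smith normal form of $M$. A direct evaluation, exactly as for the matrix in Remark \ref{poincare}, gives $\det M=3$; since the entries of $M$ include $\pm1$ their gcd is $1$, and because $3$ is prime the elementary divisors are forced to be $1,1,1,3$. Consequently $M^{\mathsf T}$ is injective over $\Z$ with $\coker M^{\mathsf T}\simeq\Z/3\Z$.

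Reading the cohomology off the periodic pattern is then immediate and forced: $H^0=\ker(0)=\Z$; and in each period, for $q\ge1$, one gets $H^{4q+1}=\ker M^{\mathsf T}=0$, $H^{4q+2}=\coker M^{\mathsf T}=\Z/3\Z$, $H^{4q+3}=\ker(\times 24)=0$, and $H^{4q}=\coker(\times 24)=\Z/24\Z$. This reproduces the asserted table and agrees with \cite{tomoda-zvengrowski_2008}. I expect the determinant/elementary-divisor computation of $M$ to be the only step requiring actual work; everything else is dictated by the shape of the resolution in Corollary \ref{resolforT}.
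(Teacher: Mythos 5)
Your proposal is correct and is essentially the paper's own argument: the paper omits the proof for $\BT$ precisely because it is ``always the same arguments'' as the proofs of Corollaries \ref{HstarO} and \ref{HstarI}, namely Lemma \ref{quotient_complex}, augmentation of the differentials from Corollary \ref{resolforT}, dualization, and an elementary-divisor computation, and your key computation $\det M=3$ (hence elementary divisors $1,1,1,3$ and $\coker M^{\mathsf T}\simeq\Z/3\Z$) checks out. The only nitpick is the quantifier ``for $q\ge1$'' attached to all four formulas, which leaves degrees $1,2,3$ formally uncovered; these follow from the same formulas with $q=0$ (or from the evident $4$-periodicity of the cochain complex in positive degrees).
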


\begin{theo}\label{SPHNTcells}
The chain complex $\mathcal{C}(\mathsf{P}_{\BT}^{4n-1},\Z[\BT])$ of the universal covering space of the tetrahedral space forms $\mathsf{P}_{\BT}^{4n-1}$ with the fundamental group acting by covering transformations is the following complex of left $\Z[\BT]$-modules:
\[\xymatrix{0 \ar[r] & \Z[\BT] \ar^{\partial_{4n-1}}[r] & \Z[\BT]^4 \ar[r] & \dotsc\ar[r] & \Z[\BT]^4 \ar^{\partial_2}[r] & \Z[\BT]^4 \ar^{\partial_1}[r] & \Z[\BT] \ar[r] & 0},\]
where the boundaries are as in Corollary \ref{resolforT}.

In particular, the complex is exact in middle terms, i.e.
\[\forall 0<i<4n-1,~H_i(\mathcal{C}(\mathsf{P}_{\BT}^{4n-1},\Z[\BT]))=0\]
and we have
\[H_0(\mathcal{C}(\mathsf{P}_{\BT}^{4n-1},\Z[\BT]))=H_{4n-1}(\mathcal{C}(\mathsf{P}_{\BT}^{4n-1},\Z[\BT]))=\Z.\]
\end{theo}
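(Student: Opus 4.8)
The plan is to mirror the proofs of Theorems \ref{SPHNOcells} and \ref{SPHNIcells}, replacing $\BO$ (resp. $\BI$) by $\BT$ throughout; the paper's remark that it is ``always the same arguments'' applies verbatim here. The starting point is Theorem \ref{SPHTcells}, which supplies a fundamental domain $\mathscr{F}_{3}\subset\Sph^3$ for the $\BT$-action together with an explicit $\BT$-equivariant cellular decomposition whose associated chain complex is $\mathcal{K}_\BT$ from Proposition \ref{Tchain}, with orbit representatives consisting of one $0$-cell, four $1$-cells, four $2$-cells and one $3$-cell. Since $\BT\le O(4)$ acts diagonally on $\R^{4n}$ and $\Sph^3=\Sph^{4\cdot 1-1}$, I would apply Lemma \ref{lem_join} with its $n$ replaced by $4$ and its $h$ replaced by $n$. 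This uses the identification $\Sph^{4n-1}=\Sph^{4(n-1)-1}\ast\Sph^3$ to produce a fundamental domain $\mathscr{F}_{4n-1}=\Sigma_1\ast\cdots\ast\Sigma_{2(n-1)}\ast\mathscr{F}_{3}$, exactly as in Proposition \ref{OfunddomN}, along with an induced $\BT$-equivariant cell structure on $\Sph^{4n-1}$.

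First I would record the cell structure coming from the curved-join recursion. By the construction preceding Lemma \ref{lem_join}, the cells of $\Sph^{4n-1}$ in dimensions $0,\dots,3$ are precisely those of the $\Sph^3$-decomposition, and in each subsequent block of four dimensions one attaches the curved joins $\Sph^{4q-1}\ast\widetilde{e}^{\,i}_{j}$ of the previously built skeleton with the representative cells $\widetilde{e}^{\,i}_{j}$ of $\mathscr{F}_{3}$. Because the ranks of $\widetilde{Z}$ are $1,4,4,1$, the chain complex is $4$-periodic, and the boundary maps $\partial_{4q-3},\partial_{4q-2},\partial_{4q-1}$ coincide with $\partial_1,\partial_2,\partial_3$ of $\mathcal{K}_\BT$ for every $q\ge 1$, exactly as listed in Corollary \ref{resolforT}.

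Next I would compute the only genuinely new differential, namely $\partial_{4q}$. Since $\widetilde{e}^{\,4q}=\Sph^{4q-1}\ast\widetilde{e}^{\,4q-1}$, the Leibniz rule of Lemma \ref{bound_join} shows that the boundary of the top cell of each block sweeps across every cell of $\Sph^{4q-1}$, hence meets each $\BT$-orbit exactly once; this produces $\partial_{4q}=\bigl(\sum_{g\in\BT}g\bigr)$, the norm element, matching Corollary \ref{resolforT}. Assembling these computations yields the asserted complex $\mathcal{C}(\mathsf{P}_{\BT}^{4n-1},\Z[\BT])$ truncated at dimension $4n-1$.

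Finally, the homology claims are formal. As $\Sph^{4n-1}$ is the universal covering space of $\mathsf{P}^{4n-1}_{\BT}=\Sph^{4n-1}/\BT$ with $\BT$ acting by covering transformations, the complex above is by definition the cellular chain complex of $\Sph^{4n-1}$ viewed as a complex of $\Z[\BT]$-modules, so its homology is the singular homology of the sphere $\Sph^{4n-1}$. This is $\Z$ in degrees $0$ and $4n-1$ and vanishes in between, giving $H_i=0$ for $0<i<4n-1$ and $H_0=H_{4n-1}=\Z$. The one step needing genuine care is the verification that the join recursion reproduces $\partial_1,\partial_2,\partial_3$ unchanged and delivers the norm element as $\partial_{4q}$; once the orientation conventions of Theorem \ref{SPHTcells} and Figure \ref{T2cells} are fixed, this is a direct application of Lemma \ref{bound_join}, entirely parallel to the octahedral case, and I would simply cite that computation rather than repeat it.
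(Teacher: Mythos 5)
Your proposal is correct and takes essentially the same route as the paper, which for this tetrahedral case deliberately omits the proof and refers to the identical argument given for Theorem \ref{SPHNOcells}: apply Lemma \ref{lem_join} to the decomposition of Theorem \ref{SPHTcells} to get the $4$-periodic complex with blocks $\partial_1,\partial_2,\partial_3$ of $\mathcal{K}_\BT$, observe that the boundary of each $4q$-cell sweeps out all $\BT$-translates of the $(4q-1)$-cell once, producing the norm element $\sum_{g\in\BT}g$, and deduce the homology statements from the fact that $\Sph^{4n-1}$ is the universal cover of $\mathsf{P}^{4n-1}_{\BT}$. No gaps; your treatment of $\partial_{4q}$ via Lemma \ref{bound_join} is exactly the paper's ``previous discussion'' preceding Theorem \ref{SPHNOcells}.
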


\subsection{Simplicial structure and minimal resolution}
\hfill\\

Since we have chosen polytopal fundamental domains for $\BT$, $\BO$ and $\BI$, it is clear that we can refine our cellular decompositions to equivariant simplicial decompositions of $\Sph^3$. We will just investigate the case of $\BT$, since the other ones can be treated in a similar way. The method is trivial: just take each one of the facets $\Delta_i$ of $\Pol$ as the 3-cells and their boundary (up to multiplication) as 2-cells. 
\newline
\indent For instance, here, take as 3-cells the following open curved joins:
\[c^3_1:=]\omega_0,1,\omega_{ij},\omega_j[,~c^3_2:=]\omega_0,1,\omega_i,\omega_{ij}[,~c^3_3:=]\omega_0,\omega_{ij},k,\omega_j[,~c^3_4:=]k,\omega_0,\omega_i,\omega_{ij}[\]
and as 2-cells the following open triangles:
\[\forall 1\le i\le 4,~c^2_i:=e^2_i\]
and
\[c^2_5:=]\omega_0,1,\omega_{ij}[,~c^2_6:=]\omega_{ij},\omega_j,\omega_0[,~c^2_7:=]\omega_0,\omega_i,\omega_{ij}[,~c^2_8:=]\omega_0,k,\omega_{ij}[\]
and we may keep the 1-cells as they are, i.e. $c^1_i:=e^1_i$ for $1\le i \le 4$. Then, the resulting simplicial homology complex is easily computed (for example, by orienting the 3-cells directly), just as we did above. One shall find of course a complex that is homotopy equivalent to the complex $\mathcal{K}_\BT$ defined in Theorem \ref{SPHTcells}. We omit the details.
\newline
\indent We conclude by discussing the minimal resolution. Group resolution and group cohomology are purely algebraic invariants of the given group $G$. Under this point of view, Swan \cite{swan_1965} proved the existence of a minimal periodic free resolution of $\Z$ over $G$, for a family of finite groups containing the spherical space form groups. This means a resolution with minimal $\Z[G]$ module's ranks. He also gave a bound for these ranks. This point has been discussed in \cite{chirivi-spreafico} for the resolution over the groups $P'_{8\cdot 3^s}$ of the tetrahedral family. Here, we show how to ``reduce'' our resolution for $\BT$ to the minimal one, that has ranks 1-2-2-1, compare \cite[10.6]{chirivi-spreafico}. (We note that in \cite[10.5]{chirivi-spreafico} there is a missprint: one should read $f_h(F^{\bullet})$ instead of $\mu_h(G)$ in the statement of the proposition.) We first describe the underlying geometric idea, and next we give an explicit chain homotopy.

Geometrically, the construction is as follows: start with the cellular decomposition from Theorem \ref{SPHTcells}. As seen in Figure \ref{T2cells}, the four upper triangles are sent by different group elements to the four lower triangles. It is clear that there is no way of collecting two triangles in one single 2-cell but we may proceed as follows. Pick up one triangle, say $e^2_1$, and one of its neighbours, say $\omega_0 e^2_3$ and set $a_1$ to be the union of these two triangles, namely
\[a_1:=e^2_1+e^2_2.\]
Then, we have that $\omega_{ij}a_1=\omega_{ij}e^2_1+\omega_{ij}e^2_2$ and $y:=\omega_{ij}e^2_2$ does not belong to the boundary of the fundamental domain $\mathscr{F}_{\BT,3}$. However, we may find an other pair of coherent triangles such that one of them is mapped to $y$ by some group element, while the other one is mapped to some triangle in the boundary of $\mathscr{F}_{\BT,3}$. For example, take
\[a_2:=\omega_0e^2_3+\omega_je^2_2.\]
Then, we have $\omega_0^{-1}a_2=e^2_3+y$. As a consequence,
\[\omega_0^{-1}a_2-\omega_{ij}a_1=e^2_3-\omega_{ij}e^2_1\]
and this means that we can use the three 2-cells $a_1$, $a_2$ and $e^2_4$ to cover all the boundary of $\mathscr{F}_{\BT,3}$. We would like to add one more triangle to the first two 2-cells in order to reduce the total number to two, but we easily see that the same procedure fails. However, we may proceed in the following ``dual'' way. Let $x$ be a triangle such that $\omega_0^{-1}x=e^2_4$ and $\omega_{ij}x=\omega_ie^2_4$. We can take $x:=]i,\omega_j,\omega_0[$ and then we define
\[b_1:=a_1+x=e^2_1+e^2_2+x\]
and
\[b_2:=a_2+x=\omega_0e^2_3+\omega_je^2_2+x.\]
Then, after a simple calculation, we find that
\[b_1-b_2+\omega_0^{-1}b_2-\omega_{ij}b_1=a_1-a_2+\omega_0^{-1}a_2-\omega_{ij}a_1+\omega_0^{-1}x-\omega_{ij}x\]
\[=(1-\omega_{ij})e^2_1+(1-\omega_j)e^2_2+(1-\omega_0)e^2_3+(1-\omega_i)e^2_4=d_3(e^3),\]
that is, the whole boundary of $\mathscr{F}_{\BT,3}$ is obtained using only the two $2$-chains $b_1$ and $b_2$.
\newline
\indent We can then give the reduced complex. It is given by the following
\[\mathcal{K}_{\BT}':=\left(\xymatrix{0 \ar[r] & K'_3 \ar^{\partial_1'}[r] & K'_2 \ar^{\partial_2'}[r] & K'_1 \ar^{\partial_1'}[r] & K'_0 \ar[r] & 0}\right),\]
where $K'_0=\Z[\BT]\left<f^0\right>$, $K'_3=\Z[\BT]\left<f^3\right>$, $K'_1=\Z[\BT]\left<f^1_1,f^1_2\right>$ and $K'_2=\Z[\BT]\left<f^2_1,f^2_2\right>$ and 
\[\left\{\begin{array}{ll}
\partial'_3(f^3)=(1-\omega_{ij})f^2_1+(1-\omega_0)f^2_2,\\
\partial'_2(f^2_1)=(\omega_0+\omega_i-1)f^1_1+(i+1)f^1_2,\\
\partial'_2(f^2_2)=(1+(-i))f^1_1+(\omega_j-1+\omega_{ij})f^1_2,\\
\partial'_1(f^1_1)=(\omega_j-1)f^0,\\
\partial'_1(f^1_2)=(\omega_i-1)f^0,\end{array}\right.\]
i.e. are given in the canonical bases by right multiplication by the following matrices
\[\partial'_1=\begin{pmatrix}\omega_j-1 \\ \omega_i-1\end{pmatrix},~~\partial'_2=\begin{pmatrix}\omega_0+\omega_i-1 & 1+i \\ 1+(-i) & \omega_j-1+\omega_{ij} \end{pmatrix},~~\partial'_3=\begin{pmatrix}1-\omega_{ij} & 1-\omega_0\end{pmatrix}.\]
\newline
\indent We finish by giving explicit homotopy equivalences $\varphi : \mathcal{K}_\BT \to \mathcal{K}_\BT'$ and $\varphi' : \mathcal{K}_\BT' \to \mathcal{K}_\BT$. We define $\varphi(e^i):=f^i$ and $\varphi'(f^i):=e^i$ for $i=0,3$ as well as 
\[\left\{\begin{array}{lll}
\varphi_2(e^2_1):=f^2_1, \\
\varphi_2(e^2_2)=\varphi_2(e^2_4):=0, \\
\varphi_2(e^2_3):=f^2_2,\end{array}\right.~~~~\text{and}~~~~\left\{\begin{array}{ll}
\varphi_2'(f^2_1):=e^2_1+e^2_2+\omega_0e^2_4, \\
\varphi_2'(f^2_2):=\omega_{ij}e^2_2+e^2_3+e^2_4 & \end{array}\right.\]
also
\[\left\{\begin{array}{lll}
\varphi_1(e^1_1):=f^1_1+\omega_jf^1_2, \\
\varphi_1(e^1_2):=f^1_1, \\
\varphi_1(e^1_3):=f^1_2+\omega_if^1_1, \\
\varphi_1(e^1_4):=f^1_2,\end{array}\right.~~~~\text{and}~~~~\left\{\begin{array}{ll}
\varphi_1'(f^1_1):=e^1_2, \\
\varphi_1'(f^1_2):=e^1_4.\end{array}\right.\]
We immediately check that $\varphi\circ\varphi'=id_{\mathcal{K}_\BT'}$ and we just have to show that the other composition is homotopic to $id_{\mathcal{K}_\BT}$. If we define $H : \mathcal{K}_\ast \to \mathcal{K}_{\ast+1}$ by $H_0=H_2=0$, $H_1(e^1_2)=H_1(e^1_4):=0$ and $H_1(e^1_1):=e^2_4$, $H_1(e^1_3):=e^2_2$, then we have $\varphi_1'\varphi_1=id+\partial_2H_1+H_0\partial_1$ and $\varphi_2'\varphi_2=id+\partial_3H_2+H_1\partial_2$, i.e.
\[\varphi'\circ\varphi=id_{\mathcal{K}_\BT}+\partial H+H\partial\]
and $\varphi$ is indeed a homotopy equivalence, with homotopy inverse $\varphi'$. Thus, we have proved that the complex $\mathcal{K}_\BT$ from the Theorem \ref{SPHTcells} is homotopy equivalent to the complex
\[\mathcal{K}_\BT'=\left(\xymatrix{0\ar[r] & \Z[\BT] \ar^{\partial'_3}[r] & \Z[\BT]^2 \ar^{\partial'_2}[r] & \Z[\BT]^2 \ar^{\partial'_1}[r] & \Z[\BT] \ar[r] & 0}\right)\]
defined above.
\begin{rem} Observe that this process works for the group $\BT$ but fails for the other two groups, $\BO$ and $\BI$. This is not unexpected, since the resolutions determined in the present work are characterised by their geometric feature, i.e. constructed through particular orthogonal representations of the groups, and it is not  likely that this characterisation would produce a minimal resolution, that in general may not be induced by a representation. Indeed, it would be interesting to investigate the possible bounds for the ranks of a free periodic resolution induced by a linear representation.
\end{rem}


\begin{thebibliography}{MNdMS13}

\bibitem[Bum13]{bump}
D.~Bump.
\newblock {\em Lie groups}, volume 225 of {\em Graduate texts in Mathematics}.
\newblock Springer, second edition edition, 2013.

\bibitem[CM72]{coxeter-moser}
H.~S.~M. Coxeter and W.~O.~J. Moser.
\newblock {\em Generators and relations for discrete groups}, volume~14 of {\em
  Ergebnisse der Mathematik und ihrer Grenzgebiete}.
\newblock Springer-Verlag, 1972.

\bibitem[CS17]{chirivi-spreafico}
R.~Chiriv\`i and M.~Spreafico.
\newblock Space forms and group resolutions, the tetrahedral family.
\newblock {\em J. of Algebra}, 510:52--97, 2017.

\bibitem[FGMNS13]{fgns_meta}
F.~F. F\^emina, A.~P.~T. Galves, O.~Manzoli~Neto, and M.~Spreafico.
\newblock Cellular decomposition and free resolution for split metacyclic
  spherical space forms.
\newblock {\em Homology, Homotopy and Applications}, 15:253--278, 2013.

\bibitem[FGMNS16]{fgnsT}
L.~L. F\^emina, A.~P.~T. Galves, O.~Manzoli~Neto, and M.~Spreafico.
\newblock Fundamental domain and cellular decomposition of tetrahedral
  spherical space forms.
\newblock {\em Communications in Algebra}, 44(2):768--786, 2016.

\bibitem[FH91]{fulton_harris}
W.~Fulton and J.~Harris.
\newblock {\em Representation theory - A first course}, volume 129 of {\em
  Graduate texts in Mathematics}.
\newblock Springer, 1991.

\bibitem[Fra]{convex}
M.~Franz.
\newblock Convex --- a maple package for convex geometry.

\bibitem[GAP21]{GAP4}
The GAP~Group.
\newblock {\em {GAP -- Groups, Algorithms, and Programming, Version 4.11.1}},
  2021.

\bibitem[KS79]{eight_faces_poincare}
R.C. Kirby and M.G. Scharlemann.
\newblock Eight faces of the poincare homology 3-sphere.
\newblock In J.~C. Cantrell, editor, {\em Geometric Topology}, pages 113 --
  146. Academic Press, 1979.

\bibitem[Mil57]{milnor_Sn}
J.~Milnor.
\newblock Groups which act on $\mathbb{S}^n$ without fixed point.
\newblock {\em AMS}, 79(3):623--630, 1957.

\bibitem[MNdMS13]{mms13}
O.~Manzoli~Neto, T.~de~Melo, and M.~Spreafico.
\newblock Cellular decomposition of quaternionic spherical space forms.
\newblock {\em Geometriae Dedicata}, 162:9--24, 2013.

\bibitem[Mun84]{munkreselts}
J.~R. Munkres.
\newblock {\em Elements of algebraic topology}.
\newblock Addison-Wesley, 1984.

\bibitem[Swa65]{swan_1965}
R.~G. Swan.
\newblock Minimal resolutions for finite groups.
\newblock {\em Topology}, 4(2):193--208, 1965.

\bibitem[TZ08]{tomoda-zvengrowski_2008}
S.~Tomoda and P.~Zvengrowski.
\newblock Remarks on the cohomology of finite fundamental groups of
  3-manifolds.
\newblock {\em Geometry and Topology monographs}, 14:519--556, 2008.

\bibitem[Zie95]{ziegler-poly}
G.~M. Ziegler.
\newblock {\em Lectures on polytopes}.
\newblock Springer, 1995.

\end{thebibliography}

\end{document}